\numberwithin{equation}{section}
\begin{document}

\title{Estimates for fractional integral operators and linear commutators on certain weighted amalgam spaces}
\author{Hua Wang \footnote{E-mail address: wanghua@pku.edu.cn.}\\
\footnotesize{College of Mathematics and Econometrics, Hunan University, Changsha 410082, P. R. China}\\
\footnotesize{\&~Department of Mathematics and Statistics, Memorial University, St. John's, NL A1C 5S7, Canada}}
\date{}
\maketitle

\begin{abstract}
In this paper, we first introduce some new classes of weighted amalgam spaces. Then we give the weighted strong-type and weak-type estimates for fractional integral operators $I_\gamma$ on these new function spaces. Furthermore, the weighted strong-type estimate and endpoint estimate of linear commutators $[b,I_{\gamma}]$ generated by $b$ and $I_{\gamma}$ are established as well. In addition, we are going to study related problems about two-weight, weak type inequalities for $I_{\gamma}$ and $[b,I_{\gamma}]$ on the weighted amalgam spaces and give some results. Based on these results and pointwise domination, we can prove norm inequalities involving fractional maximal operator $M_{\gamma}$ and generalized fractional integrals $\mathcal L^{-\gamma/2}$ in the context of weighted amalgam spaces, where $0<\gamma<n$ and $\mathcal L$ is the infinitesimal generator of an analytic semigroup on $L^2(\mathbb R^n)$ with Gaussian kernel bounds.\\
MSC(2010): 42B20; 42B25; 42B35; 46E30; 47B47\\
Keywords: Fractional integral operators; commutators; weighted amalgam spaces; Muckenhoupt weights; Orlicz spaces.
\end{abstract}

\section{Introduction}

One of the most significant operators in harmonic analysis is the fractional integral operator. Let $n$ be a positive integer. The $n$-dimensional Euclidean space $\mathbb R^n$ is endowed with the Lebesgue measure $dx$ and the Euclidean norm $|\cdot|$. For given $\gamma$, $0<\gamma<n$, the fractional integral operator (or Riesz potential) $I_{\gamma}$ of order $\gamma$ is defined by
\begin{equation}\label{frac}
I_{\gamma}f(x):=\frac{1}{\zeta(\gamma)}\int_{\mathbb R^n}\frac{f(y)}{|x-y|^{n-\gamma}}\,dy,
\quad\mbox{and}\quad \zeta(\gamma)=\frac{\pi^{\frac{n}{\,2\,}}2^\gamma\Gamma(\frac{\gamma}{\,2\,})}{\Gamma(\frac{n-\gamma}{2})}.
\end{equation}
The boundedness properties of $I_{\gamma}$ between various function spaces have been studied extensively. It is well-known that the Hardy--Littlewood--Sobolev theorem states that the fractional integral operator $I_{\gamma}$ is bounded from $L^p(\mathbb R^n)$ to $L^q(\mathbb R^n)$ for $0<\gamma<n$, $1<p<n/{\gamma}$ and $1/q=1/p-{\gamma}/n$. Also we know that $I_{\gamma}$ is bounded from $L^1(\mathbb R^n)$ to $WL^q(\mathbb R^n)$ for $0<\gamma<n$ and $q=n/{(n-\gamma)}$ (see \cite{stein}). In 1974, Muckenhoupt and Wheeden \cite{muckenhoupt} studied the weighted boundedness of $I_{\gamma}$ and obtained the following two results (for sharp weighted norm inequalities, see \cite{lacey}).

\newtheorem{theorem}{Theorem}[section]
\newtheorem{defn}{Definition}[section]
\newtheorem{corollary}{Corollary}[section]
\newtheorem{lemma}{Lemma}[section]

\begin{theorem}[\cite{muckenhoupt}]\label{strong}
Let $0<\gamma<n$, $1<p<n/{\gamma}$, $1/q=1/p-{\gamma}/n$ and $w\in A_{p,q}$. Then the fractional integral operator $I_{\gamma}$ is bounded from $L^p(w^p)$ to $L^q(w^q)$.
\end{theorem}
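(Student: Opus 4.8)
I would follow the route of Muckenhoupt and Wheeden and deduce the estimate from two facts: a weighted norm inequality for the fractional maximal operator
\[
M_{\gamma}f(x):=\sup_{Q\ni x}\frac{1}{|Q|^{1-\gamma/n}}\int_{Q}|f(y)|\,dy,
\]
and a good-$\lambda$ inequality that compares $|I_{\gamma}f|$ with $M_{\gamma}f$ against the measure $w^{q}\,dx$. The starting point is the algebra of the hypothesis: with $1/q=1/p-\gamma/n$, the condition $w\in A_{p,q}$ is equivalent to $w^{q}\in A_{1+q/p'}$ and to $w^{-p'}\in A_{1+p'/q}$; in particular $w^{q}$ and $w^{-p'}$ lie in $A_{\infty}$, so $w^{q}$ is doubling and satisfies $w^{q}(E)\le C(|E|/|Q|)^{\delta}w^{q}(Q)$ for every measurable $E\subseteq Q$ and some $\delta>0$, and both weights enjoy reverse Hölder inequalities. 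Moreover $A_{p,q}$ is an open condition: there are exponents $p_{0}<p<p_{1}$ with $w\in A_{p_{i},q_{i}}$, where $1/q_{i}=1/p_{i}-\gamma/n$.

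\emph{Step 1: the maximal operator.} I would first show $M_{\gamma}\colon L^{p}(w^{p})\to L^{q}(w^{q})$. For the weak-type bound, a Vitali/Calder\'on--Zygmund covering of $\{M_{\gamma}f>\lambda\}$ produces essentially disjoint cubes $Q_{j}$ with $\lambda<|Q_{j}|^{\gamma/n-1}\int_{Q_{j}}|f|$. H\"older's inequality on each $Q_{j}$, the $A_{p,q}$ estimate, and the arithmetic identity $q\gamma/n-q/p+1=0$ make every power of $|Q_{j}|$ cancel, giving $\lambda^{q}w^{q}(Q_{j})\lesssim(\int_{Q_{j}}|f|^{p}w^{p})^{q/p}$; summing over $j$ and using $q/p\ge1$ yields $w^{q}(\{M_{\gamma}f>\lambda\})\lesssim\lambda^{-q}\|f\|_{L^{p}(w^{p})}^{q}$. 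Interpolating this with the analogous weak-type bounds at $(p_{0},q_{0})$ and $(p_{1},q_{1})$, via a Marcinkiewicz-type theorem with change of measure, upgrades it to $\|M_{\gamma}f\|_{L^{q}(w^{q})}\lesssim\|f\|_{L^{p}(w^{p})}$.

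\emph{Step 2: the good-$\lambda$ inequality and assembly.} For fixed $\lambda>0$ I would decompose the open set $\{|I_{\gamma}f|>\lambda\}$ into Whitney cubes $Q$ (after a routine truncation of $I_{\gamma}$ ensuring $\|I_{\gamma}f\|_{L^{q}(w^{q})}$ is a priori finite when $f$ is bounded with compact support), and on each $Q$ split $f=f\mathbf{1}_{cQ}+f\mathbf{1}_{(cQ)^{c}}$ and estimate both parts using a point $x_{Q}\in Q$ with $M_{\gamma}f(x_{Q})\le\epsilon\lambda$ (if no such point exists the set below is empty). The local part is controlled by the weak $(1,n/(n-\gamma))$ bound for $I_{\gamma}$ quoted in the introduction, together with $|cQ|^{\gamma/n-1}\int_{cQ}|f|\lesssim\epsilon\lambda$; the tail part is nearly constant on $Q$ and, by the decay of the Riesz kernel and the fact that a point near $Q$ lies outside $\{|I_{\gamma}f|>\lambda\}$, is bounded by $C\lambda+CM_{\gamma}f(x_{Q})\le C\lambda$. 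This gives the unweighted estimate
\[
\bigl|\{x\in Q:\ |I_{\gamma}f(x)|>2\lambda,\ M_{\gamma}f(x)\le\epsilon\lambda\}\bigr|\le c_{n,\gamma}\,\epsilon^{n/(n-\gamma)}|Q|,
\]
and transferring it through the $A_{\infty}$ bound for $w^{q}$ and summing over the Whitney cubes (using that $w^{q}$ is doubling) yields $w^{q}(\{|I_{\gamma}f|>2\lambda,\ M_{\gamma}f\le\epsilon\lambda\})\le C\epsilon^{\delta'}w^{q}(\{|I_{\gamma}f|>\lambda\})$ with $\delta'=\delta n/(n-\gamma)>0$. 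Feeding this into the distribution-function formula for $\|I_{\gamma}f\|_{L^{q}(w^{q})}^{q}$, splitting each level set via $\{M_{\gamma}f>\epsilon\lambda\}$, and choosing $\epsilon$ small enough that the resulting multiple of $\|I_{\gamma}f\|_{L^{q}(w^{q})}^{q}$ is absorbed into the left-hand side, I obtain $\|I_{\gamma}f\|_{L^{q}(w^{q})}\lesssim\|M_{\gamma}f\|_{L^{q}(w^{q})}\lesssim\|f\|_{L^{p}(w^{p})}$; removing the truncation by monotone convergence and then using density completes the proof.

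The main obstacle is Step 2: the sharp local estimate requires a careful decomposition of $f$ relative to each Whitney cube and a clean use of the endpoint weak-type bound for $I_{\gamma}$ and of the kernel's decay, and the absorption argument must be legitimized by first truncating so that the left-hand side is finite; the $A_{\infty}$ transfer and the maximal-operator bound are, by comparison, routine. An alternative that bypasses the good-$\lambda$ machinery is to dominate $I_{\gamma}f$ pointwise by a finite sum, over shifted dyadic grids, of sparse operators $f\mapsto\sum_{Q\in\mathcal{S}}|Q|^{\gamma/n}\langle|f|\rangle_{Q}\mathbf{1}_{Q}$ and to bound each of these from $L^{p}(w^{p})$ to $L^{q}(w^{q})$ directly from the $A_{p,q}$ condition, using the reverse H\"older inequality to fill in the needed bump; this is structurally cleaner but relies on more recent technology.
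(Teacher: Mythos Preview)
The paper does not give its own proof of this statement: Theorem~\ref{strong} is quoted from Muckenhoupt and Wheeden \cite{muckenhoupt} and used as a black box throughout (see, e.g., the proof of Theorem~\ref{mainthm:1}). Your outline is essentially the original Muckenhoupt--Wheeden argument---reduce to the fractional maximal function via a good-$\lambda$ inequality against the $A_\infty$ weight $w^q$, and bound $M_\gamma$ on $L^p(w^p)\to L^q(w^q)$ by covering plus interpolation---so there is nothing in the paper to compare against, and your sketch is a correct reconstruction of the classical proof.
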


\begin{theorem}[\cite{muckenhoupt}]\label{weak}
Let $0<\gamma<n$, $p=1$, $q=n/{(n-\gamma)}$ and $w\in A_{1,q}$. Then the fractional integral operator $I_{\gamma}$ is bounded from $L^1(w)$ to $WL^q(w^q)$.
\end{theorem}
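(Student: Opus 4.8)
The plan is to deduce the estimate for $I_\gamma$ from the corresponding endpoint bound for the fractional maximal operator
$$M_\gamma f(x):=\sup_{Q\ni x}\frac{1}{|Q|^{\,1-\gamma/n}}\int_Q|f(y)|\,dy,$$
the supremum running over all cubes $Q\subset\mathbb R^n$ containing $x$. Two preliminary remarks set the stage. First, since $p=1$ forces $p'=\infty$, the hypothesis $w\in A_{1,q}$ means exactly that $\big(|Q|^{-1}\int_Q w^q\big)^{1/q}\le C\inf_Q w$ (essential infimum) for every cube $Q$; equivalently $\sigma:=w^q\in A_1$, so in particular $\sigma\in A_\infty$ and $\sigma$ is a doubling measure. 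Second, the exponent relation $1/q=1-\gamma/n$ is equivalent to the identity $(n-\gamma)q=n$, and this is the algebraic fact that makes the endpoint exponent $q$ come out exactly right below. We may assume $f\ge 0$, and, by routine density and kernel‑truncation arguments, that $f$ is bounded with compact support.

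The first step is to prove $\|M_\gamma f\|_{WL^q(w^q)}\le C\|f\|_{L^1(w)}$. Fix $\lambda>0$. Covering the level set $\{x:M_\gamma f(x)>\lambda\}$ in the standard way --- by a Vitali‑type covering argument, or, more cleanly, by first passing to the finitely many shifted dyadic lattices so as to obtain genuine maximal dyadic cubes --- yields a pairwise (essentially) disjoint family of cubes $\{Q_j\}$ such that $\{M_\gamma f>\lambda\}\subseteq\bigcup_j \kappa Q_j$ for a fixed dilation constant $\kappa\ge1$, and $|Q_j|^{\gamma/n-1}\int_{Q_j}f>\lambda$, i.e. $\ell(Q_j)^{n-\gamma}<\lambda^{-1}\int_{Q_j}f$. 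Since $\sigma=w^q\in A_1$ gives $\inf_{Q_j}w=(\inf_{Q_j}\sigma)^{1/q}\ge\big(\sigma(Q_j)/(C|Q_j|)\big)^{1/q}$, we get $\int_{Q_j}f\le\big(C|Q_j|/\sigma(Q_j)\big)^{1/q}\int_{Q_j}fw$; inserting this into the previous inequality and raising to the power $q$,
$$\sigma(Q_j)\,\ell(Q_j)^{(n-\gamma)q}\;\le\;\frac{C}{\lambda^{q}}\,|Q_j|\Big(\int_{Q_j}fw\Big)^{q}.$$
Because $(n-\gamma)q=n$, the factor $\ell(Q_j)^{(n-\gamma)q}$ equals $|Q_j|$ and cancels, leaving $\sigma(Q_j)\le C\lambda^{-q}\big(\int_{Q_j}fw\big)^{q}$. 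Summing over $j$, using the doubling of $\sigma$ to replace $\kappa Q_j$ by $Q_j$, the disjointness of the $Q_j$, and --- crucially --- $q>1$ so that $\sum_j a_j^q\le(\sum_j a_j)^q$ for $a_j\ge 0$, we conclude
$$w^q\big(\{M_\gamma f>\lambda\}\big)\;\le\;C\sum_j\sigma(Q_j)\;\le\;\frac{C}{\lambda^{q}}\Big(\sum_j\int_{Q_j}fw\Big)^{q}\;\le\;\frac{C}{\lambda^{q}}\,\|f\|_{L^1(w)}^{q},$$
which is the claimed weak‑type estimate for $M_\gamma$.

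It remains to pass from $M_\gamma$ to $I_\gamma$, and here I would invoke the good‑$\lambda$ inequality of Muckenhoupt and Wheeden \cite{muckenhoupt}: since $\sigma=w^q\in A_1\subset A_\infty$, there are constants $C,\delta>0$ such that for all small $\varepsilon>0$ and all $\lambda>0$,
$$w^q\big(\{I_\gamma f>2\lambda,\ M_\gamma f\le\varepsilon\lambda\}\big)\;\le\;C\varepsilon^{\delta}\,w^q\big(\{I_\gamma f>\lambda\}\big),$$
from which, choosing $\varepsilon$ small and summing the resulting estimate for the distribution function, one obtains the comparison $\|I_\gamma f\|_{WL^q(w^q)}\le C\|M_\gamma f\|_{WL^q(w^q)}$ (a limiting argument over suitable truncations of the Riesz kernel removes any need for a priori finiteness of the left‑hand side). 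Combined with the previous step this yields $\|I_\gamma f\|_{WL^q(w^q)}\le C\|f\|_{L^1(w)}$, as desired. The one genuinely substantial ingredient is this good‑$\lambda$ comparison, whose proof relies on a Whitney decomposition of $\{M_\gamma f>\lambda\}$ together with the $A_\infty$ property of $w^q$; I regard it as the main obstacle, although it is entirely classical and is precisely the machinery developed in \cite{muckenhoupt}. A self‑contained alternative would be to dominate $I_\gamma f$ pointwise by a finite sum of dyadic sparse fractional integrals $\sum_{Q\in\mathcal S}|Q|^{\gamma/n-1}\big(\int_Q f\big)\chi_Q$ and run a stopping‑time version of the first step directly; the only delicate point there is again to recover the exact power $q$ via the embedding $\ell^1\hookrightarrow\ell^q$.
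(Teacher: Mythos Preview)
The paper does not give its own proof of this statement: Theorem~\ref{weak} is quoted as a known result of Muckenhoupt and Wheeden \cite{muckenhoupt} and is used as a black box in the proof of Theorem~\ref{mainthm:2}. Your argument is correct and is essentially the classical route taken in \cite{muckenhoupt}: first a direct covering argument for $M_\gamma$ using that $w^q\in A_1$ (equivalently $w\in A_{1,q}$) together with the identity $(n-\gamma)q=n$ and the embedding $\ell^1\hookrightarrow\ell^q$, and then the good-$\lambda$ comparison $\|I_\gamma f\|_{WL^q(w^q)}\le C\|M_\gamma f\|_{WL^q(w^q)}$, valid because $w^q\in A_\infty$. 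The only point worth flagging is that the paper's later arguments (e.g.\ in the proof of Theorem~\ref{mainthm:2}) also exploit the equivalence $w^q\in A_1\Longleftrightarrow w\in A_1\cap RH_q$, so your observation that $\sigma=w^q\in A_1$ is exactly the form of the hypothesis the paper uses downstream.
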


For $0<\gamma<n$, the linear commutator $[b,I_{\gamma}]$ generated by a suitable function $b$ and $I_{\gamma}$ is defined by
\begin{align}\label{lfrac}
[b,I_{\gamma}]f(x)&:=b(x)\cdot I_{\gamma}f(x)-I_\gamma(bf)(x)\notag\\
&=\frac{1}{\zeta(\gamma)}\int_{\mathbb R^n}\frac{[b(x)-b(y)]\cdot f(y)}{|x-y|^{n-\gamma}}\,dy.
\end{align}

This commutator was first introduced by Chanillo in \cite{chanillo}. In 1991, Segovia and Torrea \cite{segovia} showed that $[b,I_{\gamma}]$ is bounded from $L^p(w^p)$ ($1<p<n/{\gamma}$) to $L^q(w^q)$ whenever $b\in BMO(\mathbb R^n)$ (see \cite{cruz6} for sharp weighted bounds, see also \cite{chanillo} for the unweighted case).
This corresponds to the norm inequalities satisfied by $I_{\gamma}$. Let us recall the definition of the space of $BMO(\mathbb R^n)$ (see \cite{john}). $BMO(\mathbb R^n)$ is the Banach function space modulo constants with the norm $\|\cdot\|_*$ defined by
\begin{equation*}
\|b\|_*:=\sup_{B:ball}\frac{1}{|B|}\int_B|b(x)-b_B|\,dx<\infty,
\end{equation*}
where the supremum is taken over all balls $B$ in $\mathbb R^n$ and $b_B$ stands for the mean value of $b$ over $B$; that is, $b_B:=\frac{1}{|B|}\int_B b(y)\,dy.$

\begin{theorem}[\cite{segovia}]\label{cstrong}
Let $0<\gamma<n$, $1<p<n/{\gamma}$, $1/q=1/p-{\gamma}/n$ and $w\in A_{p,q}$. Suppose that $b\in BMO(\mathbb R^n)$, then the linear commutator $[b,I_{\gamma}]$ is bounded from $L^p(w^p)$ to $L^q(w^q)$.
\end{theorem}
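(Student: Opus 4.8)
The idea is to reduce the commutator bound to the mapping properties of $I_{\gamma}$ itself (Theorem~\ref{strong}) by means of the Fefferman--Stein sharp maximal function. Write $M$ for the Hardy--Littlewood maximal operator and $M^{\sharp}$ for the sharp maximal operator; for $0<\delta<1$ put $M_{\delta}f:=\big(M(|f|^{\delta})\big)^{1/\delta}$ and $M_{\delta}^{\sharp}f:=\big(M^{\sharp}(|f|^{\delta})\big)^{1/\delta}$, and for $1\le r<\infty$ let
\begin{equation*}
M_{\gamma,r}f(x):=\sup_{B\ni x}|B|^{\gamma/n}\Big(\tfrac1{|B|}\int_{B}|f(y)|^{r}\,dy\Big)^{1/r}=\big(M_{\gamma r}(|f|^{r})(x)\big)^{1/r},
\end{equation*}
where $M_{\gamma r}$ denotes the fractional maximal operator of order $\gamma r$.

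\emph{Step 1 (pointwise estimate).} First I would prove that for $b\in BMO(\mathbb R^n)$, any $0<\delta<\varepsilon<1$ and any $1<r<n/\gamma$,
\begin{equation*}
M_{\delta}^{\sharp}\big([b,I_{\gamma}]f\big)(x)\le C\,\|b\|_{*}\big(M_{\gamma,r}f(x)+M_{\varepsilon}(I_{\gamma}f)(x)\big).
\end{equation*}
Fix a ball $B=B(x_{0},r_{B})$ with $x\in B$, split $f=f\chi_{2B}+f\chi_{(2B)^{c}}=:f_{1}+f_{2}$, and for $y\in B$ write
\begin{equation*}
[b,I_{\gamma}]f(y)=(b(y)-b_{B})\,I_{\gamma}f(y)-I_{\gamma}\big((b-b_{B})f_{1}\big)(y)-I_{\gamma}\big((b-b_{B})f_{2}\big)(y).
\end{equation*}
Subtracting the constant $c_{B}:=I_{\gamma}\big((b-b_{B})f_{2}\big)(x_{0})$ and estimating $\big(\tfrac1{|B|}\int_{B}|\,\cdot\,|^{\delta}\big)^{1/\delta}$: the first term is handled by Hölder's inequality in the exponents $\varepsilon/\delta$ and its conjugate together with the John--Nirenberg estimate $\big(\tfrac1{|B|}\int_{B}|b-b_{B}|^{s}\big)^{1/s}\le C_{s}\|b\|_{*}$, giving $\|b\|_{*}M_{\varepsilon}(I_{\gamma}f)(x)$; the second term is handled by Kolmogorov's inequality (using $I_{\gamma}\colon L^{1}\to L^{n/(n-\gamma),\infty}$) followed by Hölder's inequality, which absorbs $|b-b_{B}|$ and produces $\|b\|_{*}M_{\gamma,r}f(x)$; the third term is handled by the size/regularity estimate of the kernel $|x-y|^{\gamma-n}$ off $2B$, decomposed over the dyadic annuli $2^{k+1}B\setminus 2^{k}B$, using $|b_{2^{k+1}B}-b_{B}|\le C(k+1)\|b\|_{*}$ and the convergence of the resulting series $\sum_{k\ge1}(k+1)2^{-k}$, which again yields $\|b\|_{*}M_{\gamma,r}f(x)$.

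\emph{Step 2 (weighted $L^q$).} Since $w\in A_{p,q}$ forces $w^{q}\in A_{1+q/p'}\subset A_{\infty}$, the Fefferman--Stein inequality $\|g\|_{L^{q}(w^{q})}\le C\|M_{\delta}^{\sharp}g\|_{L^{q}(w^{q})}$ applies (after the standard a priori reduction ensuring the left side is finite, e.g.\ truncating $b$ and taking $f$ bounded of compact support, then passing to the limit). Taking $g=[b,I_{\gamma}]f$ and inserting the bound from Step~1,
\begin{equation*}
\big\|[b,I_{\gamma}]f\big\|_{L^{q}(w^{q})}\le C\,\|b\|_{*}\Big(\big\|M_{\gamma,r}f\big\|_{L^{q}(w^{q})}+\big\|M_{\varepsilon}(I_{\gamma}f)\big\|_{L^{q}(w^{q})}\Big).
\end{equation*}
For the second term, $\|M_{\varepsilon}(I_{\gamma}f)\|_{L^{q}(w^{q})}=\|M(|I_{\gamma}f|^{\varepsilon})\|_{L^{q/\varepsilon}(w^{q})}^{1/\varepsilon}$; choosing $\varepsilon$ small enough (and then $\delta<\varepsilon$) that $w^{q}\in A_{q/\varepsilon}$ makes $M$ bounded on $L^{q/\varepsilon}(w^{q})$, so this is $\lesssim\|I_{\gamma}f\|_{L^{q}(w^{q})}\lesssim\|f\|_{L^{p}(w^{p})}$ by Theorem~\ref{strong}. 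For the first term I would use the self-improvement of the $A_{p,q}$ condition: by the reverse Hölder inequality for $w^{q}\in A_{1+q/p'}$ there is $r>1$, as close to $1$ as we wish (so that $1<p/r$ and $\gamma r<n$), with $w^{r}\in A_{p/r,\,q/r}$; since $(p/r)\,r=p$, $(q/r)\,r=q$, $1/(p/r)-1/(q/r)=\gamma r/n$, and $M_{\gamma r}h\le C\,I_{\gamma r}(|h|)$ pointwise, Theorem~\ref{strong} gives $M_{\gamma r}\colon L^{p/r}(w^{p})\to L^{q/r}(w^{q})$, whence
\begin{equation*}
\big\|M_{\gamma,r}f\big\|_{L^{q}(w^{q})}=\big\|M_{\gamma r}(|f|^{r})\big\|_{L^{q/r}(w^{q})}^{1/r}\le C\,\big\||f|^{r}\big\|_{L^{p/r}(w^{p})}^{1/r}=C\,\|f\|_{L^{p}(w^{p})}.
\end{equation*}
Combining the two estimates proves the theorem.

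I expect the pointwise sharp maximal inequality of Step~1 to be the main obstacle: the algebraic identity $b(x)-b(y)=(b(x)-b_{B})-(b(y)-b_{B})$ that makes the commutator tractable unavoidably leaves a factor $|b-b_{B}|$ to be absorbed, and after Kolmogorov's inequality this forces the fractional \emph{$L^{r}$-maximal} operator $M_{\gamma,r}$ with some $r>1$ rather than the plain fractional maximal operator; reconciling this with the hypothesis $w\in A_{p,q}$ is exactly what the reverse Hölder self-improvement $w\in A_{p,q}\Rightarrow w^{r}\in A_{p/r,q/r}$ in Step~2 is for. A secondary technical point is the a priori finiteness needed to invoke the Fefferman--Stein inequality, which is routine. (Alternatively, one can bypass the sharp maximal function by the Coifman--Rochberg--Weiss method: with $F(z):=e^{zb}I_{\gamma}(e^{-zb}f)$ one has $[b,I_{\gamma}]f=F'(0)=\frac{1}{2\pi i}\oint_{|z|=\rho}z^{-2}F(z)\,dz$, and for $\rho$ of order $1/\|b\|_{*}$ the weights $w\,e^{\pm(\operatorname{Re}z)b}$ stay in $A_{p,q}$ with uniformly bounded characteristic, so Theorem~\ref{strong} bounds $\|F(z)\|_{L^{q}(w^{q})}$ by $C\|f\|_{L^{p}(w^{p})}$ uniformly on $|z|=\rho$; the Cauchy estimate then finishes the argument.)
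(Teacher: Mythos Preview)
Your sketch is correct and follows a standard modern route (the sharp maximal function argument of P\'erez et al., with the reverse H\"older self-improvement $w\in A_{p,q}\Rightarrow w^{r}\in A_{p/r,q/r}$ to handle the $M_{\gamma,r}$ term). However, there is nothing to compare it with in this paper: Theorem~\ref{cstrong} is not proved here but merely quoted from \cite{segovia} as a known result, and is then used as a black box in the proof of Theorem~\ref{mainthm:3}. The original argument in \cite{segovia} is different in spirit --- Segovia and Torrea treat the commutator as a vector-valued singular integral and derive the weighted bound from vector-valued Calder\'on--Zygmund theory --- whereas your approach is scalar and reduces everything to Theorem~\ref{strong} via good-$\lambda$/Fefferman--Stein; both are valid, yours being more self-contained and closer to the extrapolation-style arguments this paper relies on elsewhere.
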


In the endpoint case $p=1$ and $q=n/{(n-\gamma)}$, since linear commutator $[b,I_{\gamma}]$ has a greater degree of singularity than $I_{\gamma}$ itself, a straightforward computation shows that $[b,I_{\gamma}]$ fails to be of weak type $(1,n/{(n-\gamma)})$ when $b\in BMO(\mathbb R^n)$ (see \cite{cruz4} for some counter-examples). However, if we restrict ourselves to a bounded domain $\Omega$ in $\mathbb R^n$, then the following weighted endpoint estimate for commutator $[b,I_{\gamma}]$ of the fractional integral operator is valid, which was established by Cruz-Uribe and Fiorenza \cite{cruz5} in 2007 (see also \cite{cruz4} for the unweighted case).

\begin{theorem}[\cite{cruz5}]\label{cweak}
Let $0<\gamma<n$, $p=1$, $q=n/{(n-\gamma)}$ and $w^q\in A_1$. Suppose that $b\in BMO(\mathbb R^n)$, then for any given $\lambda>0$ and any bounded domain $\Omega$ in $\mathbb R^n$, there exists a constant $C>0$, which does not depend on $f$, $\Omega$ and $\lambda>0$, such that
\begin{equation*}
\Big[w^q\big(\big\{x\in\Omega:\big|[b,I_{\gamma}]f(x)\big|>\lambda\big\}\big)\Big]^{1/q}
\leq C\int_{\Omega}\Phi\bigg(\frac{|f(x)|}{\lambda}\bigg)\cdot w(x)\,dx,
\end{equation*}
where $\Phi(t)=t\cdot(1+\log^+t)$ and $\log^+t=\max\{\log t,0\}$.
\end{theorem}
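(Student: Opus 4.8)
The plan is to reduce the weighted endpoint estimate for $[b,I_\gamma]$ on a bounded domain $\Omega$ to the known strong-type bound (Theorem \ref{cstrong}) together with a good-$\lambda$ or Calder\'on--Zygmund decomposition argument adapted to the Orlicz bump $\Phi(t)=t(1+\log^+t)$. First I would normalize: by homogeneity it suffices to treat $\lambda=1$ after replacing $f$ by $f/\lambda$, so the target becomes $[w^q(\{x\in\Omega:|[b,I_\gamma]f(x)|>1\})]^{1/q}\le C\int_\Omega\Phi(|f(x)|)w(x)\,dx$. I would also normalize $\|b\|_*=1$. Since $w^q\in A_1$, the measure $d\mu=w^q\,dx$ is doubling, and because $\Omega$ is bounded we may enlarge it to a cube $Q_0$ and assume $f$ is supported there; the right-hand side only grows.

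The core step is a Calder\'on--Zygmund decomposition of $f$ at height $1$ with respect to the Orlicz average. Concretely, I would use the local maximal operator $M_\Phi$ (the Orlicz maximal operator associated to $\Phi$) to form the stopping-time cubes $\{Q_j\}$ on which the $\Phi$-average of $f$ exceeds $1$, obtaining the decomposition $f=g+h$ with $g$ bounded (in an appropriate sense), $h=\sum_j h_j$, each $h_j$ supported on $Q_j$ with mean zero, and $\sum_j|Q_j|\lesssim\int_{\mathbb R^n}\Phi(|f|)\,dx$ — and, crucially, the weighted version $\sum_j w^q(Q_j)\lesssim\int\Phi(|f|)w\,dx$, which is where the $A_1$ (hence $A_\infty$) hypothesis on $w^q$ enters via the standard comparison $w^q(Q_j)\lesssim \inf_{Q_j}(w^q)\cdot|Q_j|$ is not quite right; rather one uses that $A_1$ gives $w^q(E)\lesssim (|E|/|Q|)\cdot w^q(Q)$ type estimates to absorb the averages. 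For the good part $g$, one exploits that $g$ lies in $L^p(w^p)$ with controlled norm for a suitable $p\in(1,n/\gamma)$ (using that $\Phi(t)\gtrsim t$ and the $L\log L$ control), and applies Theorem \ref{cstrong} plus Chebyshev to bound $w^q(\{|[b,I_\gamma]g|>1/2\})^{1/q}$.

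For the bad part $h$, write $[b,I_\gamma]h_j(x)=(b(x)-b_{Q_j})I_\gamma h_j(x)-I_\gamma((b-b_{Q_j})h_j)(x)$. Off $2Q_j$, the mean-zero cancellation of $h_j$ against the smooth-away kernel $|x-y|^{\gamma-n}$ gives the usual decay, and one splits further into the contribution where the $(b(x)-b_{Q_j})$ factor is handled by the weighted $BMO$ estimate $\int_{\mathbb R^n\setminus 2Q_j}\frac{|b(x)-b_{Q_j}|}{|x-x_j|^{n-\gamma}}\cdot(\text{tail factor})\,dx$, which converges because of the logarithmic growth of $BMO$ functions at infinity, paired against $\|h_j\|_{L^1}\lesssim\int_{Q_j}\Phi(|f|)$ — here the extra $\log$ in $\Phi$ is exactly what is needed to beat the $\log$ from $BMO$. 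On the near part $\bigcup_j 2Q_j$, one simply estimates $w^q(\bigcup 2Q_j)\lesssim\sum w^q(2Q_j)\lesssim\sum w^q(Q_j)\lesssim\int\Phi(|f|)w$ using doubling of $w^q$ and the weighted packing bound. Summing the three contributions and taking $q$-th roots yields the claim.

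The main obstacle I anticipate is making the weighted Calder\'on--Zygmund decomposition interact correctly with the Orlicz bump: one must produce stopping cubes whose \emph{weighted} measures sum to $\int\Phi(|f|)w\,dx$ (not just whose Lebesgue measures sum to $\int\Phi(|f|)\,dx$), and simultaneously ensure the good function is controlled in a weighted $L^p(w^p)$ norm after the change of exponent $1/q=1/p-\gamma/n$. This is delicate because $\Phi$ is not a power, so the usual $L^p$ estimate for $g$ must be replaced by an Orlicz-type bound, and the passage back to $L^p(w^p)$ uses the $A_{p,q}$/$A_1$ relationship together with the self-improvement of $A_\infty$ weights; I expect this bookkeeping — rather than any single hard inequality — to be the technical heart of the argument, and I would lean on the Orlicz maximal function machinery (boundedness of $M_\Phi$ on weighted Lebesgue spaces, and the generalized H\"older inequality $\frac1{|Q|}\int_Q|fg|\le\|f\|_{\Phi,Q}\|g\|_{\bar\Phi,Q}$ with $\bar\Phi(t)\approx e^t-1$) to organize it cleanly.
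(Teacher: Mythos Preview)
The paper does not prove this statement: Theorem~\ref{cweak} is quoted from \cite{cruz5} (Cruz-Uribe and Fiorenza, 2007) and used as a black box in the proof of Theorem~\ref{mainthm:4}. There is therefore no ``paper's own proof'' to compare your proposal against.

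That said, your sketch follows the right template and is essentially the strategy of \cite{cruz5} (building on the unweighted case in \cite{cruz4}): normalize $\lambda=1$ and $\|b\|_*=1$, perform a Calder\'on--Zygmund decomposition of $f$ adapted to the Orlicz function $\Phi$, handle the good part via the strong-type bound of Theorem~\ref{cstrong}, and treat the bad part by the splitting $[b,I_\gamma]h_j=(b-b_{Q_j})I_\gamma h_j-I_\gamma((b-b_{Q_j})h_j)$ together with the standard off-diagonal kernel decay, the logarithmic growth $|b_{2^kQ}-b_Q|\lesssim k\|b\|_*$, and the doubling of $w^q\in A_1$ on the exceptional set. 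The extra $\log$ in $\Phi$ is indeed precisely what absorbs the $BMO$ logarithm in the tail sums, and the generalized H\"older inequality with $\bar\Phi(t)\approx e^t-1$ is the right tool for that pairing.

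One point where your outline is vague and could become a genuine gap: you need the stopping cubes to satisfy a \emph{weighted} packing estimate of the form $\sum_j w^q(Q_j)^{1/q}\lesssim \int_\Omega \Phi(|f|)\,w\,dx$ (note the exponent $1/q$ on the left), not merely control of $\sum_j w^q(Q_j)$. In \cite{cruz5} this is obtained by choosing the decomposition relative to Lebesgue measure and then invoking $w^q\in A_1$ to compare $w^q(Q_j)^{1/q}$ with $|Q_j|^{1/q}\inf_{Q_j}w$, after which the stopping condition on the $\Phi$-average converts $|Q_j|$ into $\int_{Q_j}\Phi(|f|)$; the $A_1$ condition on $w^q$ (equivalently $w\in A_1\cap RH_q$) is what makes the exponents match. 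Your parenthetical ``is not quite right'' suggests you sensed this; it is exactly the step that requires care, and it is also where the boundedness of $\Omega$ is used to start the stopping-time argument.
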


Let $1\leq p,s\leq\infty$, a function $f\in L^p_{loc}(\mathbb R^n)$ is said to be in the Wiener amalgam space $(L^p,L^s)(\mathbb R^n)$ of $L^p(\mathbb R^n)$ and $L^s(\mathbb R^n)$, if the function $y\mapsto\|f(\cdot)\cdot\chi_{B(y,1)}(\cdot)\|_{L^p(\mathbb R^n)}$ belongs to $L^s(\mathbb R^n)$, where $B(y,r)=\{x\in\mathbb R^n:|x-y|<r\}$ is the open ball centered at $y$ and with radius $r$, $\chi_{B(y,r)}$ is the characteristic function of the ball $B(y,r)$, and $\|\cdot\|_{L^p}$ is the usual Lebesgue norm in $L^p(\mathbb R^n)$. Define
\begin{equation}\label{wiener}
(L^p,L^s)(\mathbb R^n):=\left\{f:\big\|f\big\|_{(L^p,L^s)(\mathbb R^n)}
=\left(\int_{\mathbb R^n}\Big[\big\|f\cdot\chi_{B(y,1)}\big\|_{L^p(\mathbb R^n)}\Big]^sdy\right)^{1/s}<\infty\right\}.
\end{equation}
Then we know that $(L^p,L^s)(\mathbb R^n)$ becomes a Banach function space with respect to the norm $\|\cdot\|_{(L^p,L^s)(\mathbb R^n)}$. This amalgam space was first introduced by Wiener in the 1920's, but its systematic study goes back to the works of Holland \cite{holland}, Fournier and Stewart \cite{F}. Let $1\leq p,s,\alpha\leq\infty$. We define the amalgam space $(L^p,L^s)^{\alpha}(\mathbb R^n)$ of $L^p(\mathbb R^n)$ and $L^s(\mathbb R^n)$ as the set of all measurable functions $f$ satisfying $f\in L^p_{loc}(\mathbb R^n)$ and $\big\|f\big\|_{(L^p,L^s)^{\alpha}(\mathbb R^n)}<\infty$, where
\begin{equation*}
\begin{split}
\big\|f\big\|_{(L^p,L^s)^{\alpha}(\mathbb R^n)}
:=&\sup_{r>0}\left\{\int_{\mathbb R^n}\Big[\big|B(y,r)\big|^{1/{\alpha}-1/p-1/s}\big\|f\cdot\chi_{B(y,r)}\big\|_{L^p(\mathbb R^n)}\Big]^sdy\right\}^{1/s}\\
=&\sup_{r>0}\Big\|\big|B(y,r)\big|^{1/{\alpha}-1/p-1/s}\big\|f\cdot\chi_{B(y,r)}\big\|_{L^p(\mathbb R^n)}\Big\|_{L^s(\mathbb R^n)},
\end{split}
\end{equation*}
with the usual modification when $p=\infty$ or $s=\infty$ and $|B(y,r)|$ is the Lebesgue measure of the ball $B(y,r)$. This generalization of amalgam space was originally introduced by Fofana in \cite{fofana}. As proved in \cite{fofana} the space $(L^p,L^s)^{\alpha}(\mathbb R^n)$ is non-trivial if and only if $p\leq\alpha\leq s$; thus in the remaining of the paper we will always assume that this condition $p\leq\alpha\leq s$ is fulfilled. Note that
\begin{itemize}
  \item For $1\leq p\leq\alpha\leq s\leq\infty$, one can easily see that $(L^p,L^s)^{\alpha}(\mathbb R^n)\subseteq(L^p,L^s)(\mathbb R^n)$, where $(L^p,L^s)(\mathbb R^n)$ is the Wiener amalgam space defined by \eqref{wiener};
  \item if $1\leq p<\alpha$ and $s=\infty$, then $(L^p,L^s)^{\alpha}(\mathbb R^n)$ is just the classical Morrey space $\mathcal L^{p,\kappa}(\mathbb R^n)$ defined by (with $\kappa=1-p/{\alpha}$, see \cite{morrey})
\begin{equation*}
\mathcal L^{p,\kappa}(\mathbb R^n):=\left\{f:\big\|f\big\|_{\mathcal L^{p,\kappa}(\mathbb R^n)}
=\sup_{y\in\mathbb R^n,r>0}\left(\frac{1}{|B(y,r)|^\kappa}\int_{B(y,r)}|f(x)|^p\,dx\right)^{1/p}<\infty\right\};
\end{equation*}
  \item if $p=\alpha$ and $s=\infty$, then $(L^p,L^s)^{\alpha}(\mathbb R^n)$ reduces to the usual Lebesgue space $L^{p}(\mathbb R^n)$.
\end{itemize}

In \cite{feuto2} (see also \cite{feuto1,feuto3}), Feuto considered a weighted version of the amalgam space $(L^p,L^s)^{\alpha}(w)$. A non-negative measurable function $w$ defined on $\mathbb R^n$ is called a weight if it is locally integrable. Let $1\leq p\leq\alpha\leq s\leq\infty$ and $w$ be a weight on $\mathbb R^n$. We denote by $(L^p,L^s)^{\alpha}(w)$ the weighted amalgam space, the space of all locally integrable functions $f$ satisfying $\big\|f\big\|_{(L^p,L^s)^{\alpha}(w)}<\infty$, where
\begin{align}\label{A}
\big\|f\big\|_{(L^p,L^s)^{\alpha}(w)}
:=&\sup_{r>0}\left\{\int_{\mathbb R^n}\Big[w(B(y,r))^{1/{\alpha}-1/p-1/s}\big\|f\cdot\chi_{B(y,r)}\big\|_{L^p(w)}\Big]^sdy\right\}^{1/s}\notag\\
=&\sup_{r>0}\Big\|w(B(y,r))^{1/{\alpha}-1/p-1/s}\big\|f\cdot\chi_{B(y,r)}\big\|_{L^p(w)}\Big\|_{L^s(\mathbb R^n)},
\end{align}
with the usual modification when $s=\infty$ and $w(B(y,r)):=\int_{B(y,r)}w(x)\,dx$ is the weighted measure of $B(y,r)$. Similarly, for $1\leq p\leq\alpha\leq s\leq\infty$, we can see that $(L^p,L^s)^{\alpha}(w)$ becomes a Banach function space with respect to the norm $\|\cdot\|_{(L^p,L^s)^{\alpha}(w)}$. Furthermore, we denote by $(WL^p,L^s)^{\alpha}(w)$ the weighted weak amalgam space consisting of all measurable functions $f$ such that (see \cite{feuto2})
\begin{align}\label{WA}
\big\|f\big\|_{(WL^p,L^s)^{\alpha}(w)}
:=&\sup_{r>0}\left\{\int_{\mathbb R^n}\Big[w(B(y,r))^{1/{\alpha}-1/p-1/s}\big\|f\cdot\chi_{B(y,r)}\big\|_{WL^p(w)}\Big]^sdy\right\}^{1/s}\notag\\
=&\sup_{r>0}\Big\|w(B(y,r))^{1/{\alpha}-1/p-1/s}\big\|f\cdot\chi_{B(y,r)}\big\|_{WL^p(w)}\Big\|_{L^s(\mathbb R^n)}<\infty.
\end{align}

Notice that
\begin{itemize}
  \item If $1\leq p<\alpha$ and $s=\infty$, then $(L^p,L^s)^{\alpha}(w)$ is just the weighted Morrey space $\mathcal L^{p,\kappa}(w)$ defined by (with $\kappa=1-p/{\alpha}$, see \cite{komori})
\begin{equation*}
\begin{split}
&\mathcal L^{p,\kappa}(w)\\
:=&\left\{f :\big\|f\big\|_{\mathcal L^{p,\kappa}(w)}
=\sup_{y\in\mathbb R^n,r>0}\bigg(\frac{1}{w(B(y,r))^{\kappa}}\int_{B(y,r)}|f(x)|^pw(x)\,dx\bigg)^{1/p}<\infty\right\},
\end{split}
\end{equation*}
and $(WL^p,L^s)^{\alpha}(w)$ is just the weighted weak Morrey space $W\mathcal L^{p,\kappa}(w)$ defined by (with $\kappa=1-p/{\alpha}$, see \cite{wang1})
\begin{equation*}
\begin{split}
&W\mathcal L^{p,\kappa}(w)\\
:=&\left\{f :\big\|f\big\|_{W\mathcal L^{p,\kappa}(w)}
=\sup_{y\in\mathbb R^n,r>0}\sup_{\lambda>0}\frac{1}{w(B(y,r))^{\kappa/p}}\lambda\cdot\Big[w\big(\big\{x\in B(y,r):|f(x)|>\lambda\big\}\big)\Big]^{1/p}
<\infty\right\};
\end{split}
\end{equation*}
  \item if $p=\alpha$ and $s=\infty$, then $(L^p,L^s)^{\alpha}(w)$ reduces to the weighted Lebesgue space $L^{p}(w)$, and $(WL^p,L^s)^{\alpha}(w)$ reduces to the weighted weak Lebesgue space $WL^{p}(w)$.
\end{itemize}

Recently, many works in classical harmonic analysis have been devoted to norm inequalities involving several integral operators in the setting of weighted amalgam spaces, see \cite{feuto4,feuto1,feuto2,feuto3} and \cite{wei}. These results obtained are extensions of well-known analogues in the weighted Lebesgue spaces.

Let $I_\gamma$ be the fractional integral operator, and let $[b,I_{\gamma}]$ be its linear commutator. The aim of this paper is twofold. We first define some new classes of weighted amalgam spaces. As the weighted amalgam space may be considered as an extension of the weighted Lebesgue space, it is natural and important to study the weighted boundedness of $I_{\gamma}$ and $[b,I_{\gamma}]$ in these new spaces. We will prove that $I_{\gamma}$ as well as its commutator $[b,I_{\gamma}]$ which are known to be bounded on weighted Lebesgue spaces, are bounded on weighted amalgam spaces under appropriate conditions. In addition, we will discuss two-weight, weak type norm inequalities for $I_\gamma$ and $[b,I_{\gamma}]$ in the context of weighted amalgam spaces and give some partial results. Using these results and pointwise domination, we will establish the corresponding strong-type and weak-type estimates for fractional maximal operator $M_{\gamma}$ and generalized fractional integrals $\mathcal L^{-\gamma/2}$, where $0<\gamma<n$ and $\mathcal L$ is the infinitesimal generator of an analytic semigroup on $L^2(\mathbb R^n)$ with Gaussian kernel bounds.

The present paper is organized as follows. In $\S$2, we first state some preliminary definitions and results about $A_p$ weights, Orlicz spaces and weighted amalgam spaces, and the main results of the present paper are also given in $\S$2. The following $\S$3, $\S$4 and $\S$5 are devoted to their proofs. Finally, in $\S$6 we discuss some related two-weight problems.

\section{Statement of our main results}

\subsection{Notations and preliminaries}

Let us first recall the definitions of two weight classes; $A_p$ and $A_{p,q}$.
\begin{defn}[$A_p$ weights \cite{muckenhoupt1}]
A weight $w$ is said to belong to the class $A_p$ for $1<p<\infty$, if there exists a positive constant $C$ such that for any ball $B$ in $\mathbb R^n$,
\begin{equation*}
\left(\frac1{|B|}\int_B w(x)\,dx\right)^{1/p}\left(\frac1{|B|}\int_B w(x)^{-p'/p}\,dx\right)^{1/{p'}}\leq C<\infty,
\end{equation*}
where we denote the conjugate exponent of $p>1$ by $p'=p/{(p-1)}$. The class $A_1$ is defined replacing the above inequality by
\begin{equation*}
\frac1{|B|}\int_B w(x)\,dx\leq C\cdot\underset{x\in B}{\mbox{ess\,inf}}\;w(x),
\end{equation*}
for any ball $B$ in $\mathbb R^n$. We also define $A_\infty=\bigcup_{1\leq p<\infty}A_p$.
\end{defn}

\begin{defn}[$A_{p,q}$ weights \cite{muckenhoupt}]
A weight $w$ is said to belong to the class $A_{p,q}$ for $1<p,q<\infty$, if there exists a positive constant $C$ such that for any ball $B$ in $\mathbb R^n$,
\begin{equation*}
\left(\frac1{|B|}\int_B w(x)^q\,dx\right)^{1/q}\left(\frac1{|B|}\int_B w(x)^{-p'}\,dx\right)^{1/{p'}}\leq C<\infty.
\end{equation*}
The class $A_{1,q}$ $(1<q<\infty)$ is defined replacing the above inequality by
\begin{equation*}
\left(\frac1{|B|}\int_B w(x)^q\,dx\right)^{1/q}\left(\underset{x\in B}{\mbox{ess\,sup}}\;\frac{1}{w(x)}\right)\leq C<\infty,
\end{equation*}
for any ball $B$ in $\mathbb R^n$.
\end{defn}
There is a close connection between $A_p$ weights and $A_{p,q}$ weights (see \cite{lu}).
\begin{lemma}\label{relation}
Suppose that $0<\gamma<n$, $1\leq p<n/{\gamma}$ and $1/q=1/p-{\gamma}/n$. Then the following statements are true:
\begin{description}
\item $(i)$ If $p>1$, then $w\in A_{p,q}$ implies $w^q\in A_q$ and $w^{-p'}\in A_{p'};$
\item $(ii)$ if $p=1$, then $w\in A_{1,q}$ if and only if $w^q\in A_1$.
\end{description}
\end{lemma}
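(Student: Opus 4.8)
The plan is to unwind the definitions of the classes $A_p$ and $A_{p,q}$ and match them term by term, the only analytic input being the monotonicity of $L^r$-averages over a ball: for a nonnegative measurable function $g$ and $0<r<s$ one has $\big(\frac1{|B|}\int_B g^r\,dx\big)^{1/r}\le\big(\frac1{|B|}\int_B g^s\,dx\big)^{1/s}$, which is Jensen's inequality for the convex map $t\mapsto t^{s/r}$. Before starting, I record the exponent relations forced by the hypotheses: from $1/q=1/p-\gamma/n$ with $0<\gamma<n$ and $1\le p<n/\gamma$ we obtain $p<q<\infty$; in particular $q>1$, and when $p>1$ also $q'<p'$.

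For $(i)$, assume $w\in A_{p,q}$, i.e. $\big(\frac1{|B|}\int_B w^q\big)^{1/q}\big(\frac1{|B|}\int_B w^{-p'}\big)^{1/p'}\le C$ for every ball $B$. Writing $u=w^q$ and using $u^{-q'/q}=w^{-q'}$, the defining inequality for $u\in A_q$ is exactly $\big(\frac1{|B|}\int_B w^q\big)^{1/q}\big(\frac1{|B|}\int_B w^{-q'}\big)^{1/q'}\le C$. Since $q'<p'$, applying the power-mean inequality to $g=w^{-1}$ gives $\big(\frac1{|B|}\int_B w^{-q'}\big)^{1/q'}\le\big(\frac1{|B|}\int_B w^{-p'}\big)^{1/p'}$, so the $A_q$-quantity for $w^q$ is dominated by the $A_{p,q}$-quantity for $w$; hence $w^q\in A_q$. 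Similarly, with $v=w^{-p'}$ and using that the conjugate exponent of $p'$ is $p$ together with $v^{-p/p'}=w^{p}$, the defining inequality for $v\in A_{p'}$ is $\big(\frac1{|B|}\int_B w^{-p'}\big)^{1/p'}\big(\frac1{|B|}\int_B w^{p}\big)^{1/p}\le C$. Since $p<q$, the power-mean inequality gives $\big(\frac1{|B|}\int_B w^{p}\big)^{1/p}\le\big(\frac1{|B|}\int_B w^{q}\big)^{1/q}$, and again the right-hand quantity is controlled by the $A_{p,q}$-quantity for $w$; hence $w^{-p'}\in A_{p'}$.

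For $(ii)$, with $p=1$ and $q=n/(n-\gamma)>1$, set $u=w^q$. For a nonnegative $w$ one has $\underset{x\in B}{\mbox{ess\,inf}}\,w(x)^q=\big(\underset{x\in B}{\mbox{ess\,inf}}\,w(x)\big)^q=\big(\underset{x\in B}{\mbox{ess\,sup}}\,\tfrac1{w(x)}\big)^{-q}$. Hence the $A_1$ condition $\frac1{|B|}\int_B w^q\le C\,\underset{x\in B}{\mbox{ess\,inf}}\,w(x)^q$ for $u$ is, after taking $q$-th roots, equivalent to $\big(\frac1{|B|}\int_B w^q\big)^{1/q}\,\underset{x\in B}{\mbox{ess\,sup}}\,\tfrac1{w(x)}\le C^{1/q}$, which is precisely the $A_{1,q}$ condition for $w$; this proves the asserted equivalence. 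I do not expect a genuine obstacle: the argument is bookkeeping of conjugate exponents together with one elementary inequality. The only point requiring slight care is $(ii)$ when $w$ vanishes or is unbounded on a set of positive measure, where both sides of the equivalence are simultaneously $0$, finite, or $\infty$, so the identities relating $\mbox{ess\,inf}$ and $\mbox{ess\,sup}$ remain valid with the usual conventions.
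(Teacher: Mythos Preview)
Your proof is correct and complete. The paper does not actually prove this lemma; it merely states it with a reference to the book of Lu, Ding and Yan, so there is no ``paper's own proof'' to compare against. Your argument---reducing everything to the monotonicity of power means (Jensen) together with the exponent ordering $p<q$, hence $q'<p'$, and then matching the $A_q$, $A_{p'}$ and $A_1$ defining quantities against the single $A_{p,q}$ bound---is exactly the standard route and is what one finds in the cited source. The bookkeeping for part $(ii)$ via $\mathrm{ess\,inf}\,w^q=(\mathrm{ess\,sup}\,w^{-1})^{-q}$ is likewise the expected step, and your remark about the $0/\infty$ conventions is a fair caveat but not a genuine issue here since weights are by assumption locally integrable and a.e.\ positive.
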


Given a ball $B$ and $\lambda>0$, we write $\lambda B$ for the ball with the same center as $B$ whose radius is $\lambda$ times that of $B$. For any $r>0$ and $y\in\mathbb R^n$, we denote by $B(y,r)^c$ the complement of $B(y,r)$ in $\mathbb R^n$; that is $B(y,r)^c:=\mathbb R^n\backslash B(y,r)$. Given a weight $w$, we say that $w$ satisfies the \emph{doubling} condition if there exists a universal constant $C>0$ such that for any ball $B$ in $\mathbb R^n$, we have
\begin{equation}\label{weights}
w(2B)\leq C\cdot w(B).
\end{equation}
When $w$ satisfies this \emph{doubling} condition \eqref{weights}, we denote $w\in\Delta_2$ for brevity. An important fact here is that if $w$ is in $A_{\infty}$, then $w\in\Delta_2$ (see \cite{garcia}). Moreover, if $w\in A_\infty$, then there exists a number $\delta>0$ such that (see \cite{garcia})
\begin{equation}\label{compare}
\frac{w(E)}{w(B)}\le C\left(\frac{|E|}{|B|}\right)^\delta
\end{equation}
holds for any measurable subset $E$ of a ball $B$.

Given a weight $w$ on $\mathbb R^n$, for $1\leq p<\infty$, the weighted Lebesgue space $L^p(w)$ is defined as the set of all functions $f$ such that
\begin{equation*}
\big\|f\big\|_{L^p(w)}:=\bigg(\int_{\mathbb R^n}|f(x)|^pw(x)\,dx\bigg)^{1/p}<\infty.
\end{equation*}
We also denote by $WL^p(w)$($1\leq p<\infty$) the weighted weak Lebesgue space consisting of all measurable functions $f$ such that
\begin{equation*}
\big\|f\big\|_{WL^p(w)}:=
\sup_{\lambda>0}\lambda\cdot\Big[w\big(\big\{x\in\mathbb R^n:|f(x)|>\lambda\big\}\big)\Big]^{1/p}<\infty.
\end{equation*}

We next recall some definitions and basic facts about Orlicz spaces needed for the proofs of our main results. For further information on this subject, we refer to \cite{rao}. A function $\mathcal A:[0,+\infty)\rightarrow[0,+\infty)$ is said to be a Young function if it is continuous, convex and strictly increasing satisfying $\mathcal A(0)=0$ and $\mathcal A(t)\to +\infty$ as $t\to +\infty$. An important example of Young function is $\mathcal A(t)=t^p\cdot(1+\log^+t)^p$ with some $1\leq p<\infty$. Given a Young function $\mathcal A$, we define the $\mathcal A$-average of a function $f$ over a ball $B$ by means of the following Luxemburg norm:
\begin{equation*}
\big\|f\big\|_{\mathcal A,B}
:=\inf\left\{\lambda>0:\frac{1}{|B|}\int_B\mathcal A\left(\frac{|f(x)|}{\lambda}\right)dx\leq1\right\}.
\end{equation*}
In particular, when $\mathcal A(t)=t^p$, $1\leq p<\infty$, it is easy to see that $\mathcal A$ is a Young function and
\begin{equation*}
\big\|f\big\|_{\mathcal A,B}=\left(\frac{1}{|B|}\int_B|f(x)|^p\,dx\right)^{1/p};
\end{equation*}
that is, the Luxemburg norm coincides with the normalized $L^p$ norm. Recall that the following generalization of H\"older's inequality holds:
\begin{equation*}
\frac{1}{|B|}\int_B\big|f(x)\cdot g(x)\big|\,dx\leq 2\big\|f\big\|_{\mathcal A,B}\big\|g\big\|_{\bar{\mathcal A},B},
\end{equation*}
where $\bar{\mathcal A}$ is the complementary Young function associated to $\mathcal A$, which is given by $\bar{\mathcal A}(s):=\sup_{0\leq t<\infty}[st-\mathcal A(t)]$, $0\leq s<\infty$. Obviously, $\Phi(t)=t\cdot(1+\log^+t)$ is a Young function and its complementary Young function is $\bar{\Phi}(t)\approx e^t-1$. In the present situation, we denote $\|f\|_{\Phi,B}$ and $\|g\|_{\bar{\Phi},B}$ by $\|f\|_{L\log L,B}$ and $\|g\|_{\exp L,B}$, respectively. Now the above generalized H\"older's inequality reads
\begin{equation}\label{holder}
\frac{1}{|B|}\int_B\big|f(x)\cdot g(x)\big|\,dx\leq 2\big\|f\big\|_{L\log L,B}\big\|g\big\|_{\exp L,B}.
\end{equation}
There is a further generalization of H\"older's inequality that turns out to be useful for our purpose (see \cite{neil}): Let $\mathcal A$, $\mathcal B$ and $\mathcal C$ be Young functions such that for all $t>0$,
\begin{equation*}
\mathcal A^{-1}(t)\cdot\mathcal B^{-1}(t)\leq\mathcal C^{-1}(t),
\end{equation*}
where $\mathcal A^{-1}(t)$ is the inverse function of $\mathcal A(t)$. Then for all functions $f$ and $g$, and for all balls $B$ in $\mathbb R^n$,
\begin{equation}\label{three}
\big\|f\cdot g\big\|_{\mathcal C,B}\leq 2\big\|f\big\|_{\mathcal A,B}\big\|g\big\|_{\mathcal B,B}.
\end{equation}

\subsection{Weighted amalgam spaces}

Let us begin with the definitions of the weighted amalgam spaces with Lebesgue measure in (\ref{A}) and (\ref{WA}) replaced by weighted measure.
\begin{defn}\label{amalgam}
Let $1\leq p\leq\alpha\leq s\leq\infty$, and let $\nu,w,\mu$ be three weights on $\mathbb R^n$. We denote by $(L^p,L^s)^{\alpha}(\nu,w;\mu)$ the weighted amalgam space, the space of all locally integrable functions $f$ such that
\begin{equation*}
\begin{split}
\big\|f\big\|_{(L^p,L^s)^{\alpha}(\nu,w;\mu)}
:=&\sup_{r>0}\left\{\int_{\mathbb R^n}\Big[w(B(y,r))^{1/{\alpha}-1/p-1/s}\big\|f\cdot\chi_{B(y,r)}\big\|_{L^p(\nu)}\Big]^s\mu(y)\,dy\right\}^{1/s}\\
=&\sup_{r>0}\Big\|w(B(y,r))^{1/{\alpha}-1/p-1/s}\big\|f\cdot\chi_{B(y,r)}\big\|_{L^p(\nu)}\Big\|_{L^s(\mu)}<\infty.
\end{split}
\end{equation*}
If $\nu=w$, then we denote $(L^p,L^s)^{\alpha}(w;\mu)$ for brevity, i.e., $(L^p,L^s)^{\alpha}(w,w;\mu):=(L^p,L^s)^{\alpha}(w;\mu)$.
Furthermore, we denote by $(WL^p,L^s)^{\alpha}(w;\mu)$ the weighted weak amalgam space consisting of all measurable functions $f$ for which
\begin{equation*}
\begin{split}
\big\|f\big\|_{(WL^p,L^s)^{\alpha}(w;\mu)}
:=&\sup_{r>0}\left\{\int_{\mathbb R^n}\Big[w(B(y,r))^{1/{\alpha}-1/p-1/s}\big\|f\cdot\chi_{B(y,r)}\big\|_{WL^p(w)}\Big]^s\mu(y)\,dy\right\}^{1/s}\\
=&\sup_{r>0}\Big\|w(B(y,r))^{1/{\alpha}-1/p-1/s}\big\|f\cdot\chi_{B(y,r)}\big\|_{WL^p(w)}\Big\|_{L^s(\mu)}<\infty,
\end{split}
\end{equation*}
with the usual modification when $s=\infty$.
\end{defn}

The aim of this paper is to extend Theorems 1.1--1.4 to the corresponding weighted amalgam spaces. We are going to prove that the fractional integral operator $I_{\gamma}$ which is bounded on weighted Lebesgue spaces, is also bounded on our new weighted spaces under appropriate conditions. Our first two results in this paper is stated as follows.
\begin{theorem}\label{mainthm:1}
Let $0<\gamma<n$, $1<p<n/{\gamma}$, $1/q=1/p-{\gamma}/n$ and $w\in A_{p,q}$. Assume that $p\leq\alpha<\beta<s\leq\infty$ and $\mu\in\Delta_2$, then the fractional integral operator $I_{\gamma}$ is bounded from $(L^p,L^s)^{\alpha}(w^p,w^q;\mu)$ into $(L^q,L^s)^{\beta}(w^q;\mu)$ with $1/{\beta}=1/{\alpha}-{\gamma}/n$.
\end{theorem}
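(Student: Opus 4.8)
The plan is to reduce the amalgam estimate to a pointwise/local decomposition of $f$ relative to each ball $B=B(y,r)$, and then combine the Muckenhoupt--Wheeden weighted bound (Theorem~\ref{strong}) on $L^p(w^p)\to L^q(w^q)$ with the smallness gained from the gap $\alpha<\beta$. Concretely, fix $r>0$ and $y\in\mathbb R^n$, and for a ball $B=B(y,r)$ write $f=f\cdot\chi_{2B}+f\cdot\chi_{(2B)^c}=:f_1+f_2$. The local part is easy: by Theorem~\ref{strong}, $\|I_\gamma f_1\|_{L^q(w^q)}\le C\|f_1\|_{L^p(w^p)}=C\|f\cdot\chi_{2B}\|_{L^p(w^p)}$, and we must then absorb the weight factor $w^q(B)^{1/\beta-1/q-1/s}$. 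Here we use that $1/\beta-\gamma/n=1/\alpha$ together with $1/q=1/p-\gamma/n$ to check the exponent bookkeeping, plus the $A_\infty$ (hence $\Delta_2$) property of $w^q$ to pass from $w^q(B)$ to $w^q(2B)$, landing on a bounded multiple of $\sup_{r>0}\|w^p(B(y,2r))^{1/\alpha-1/p-1/s}\|f\chi_{B(y,2r)}\|_{L^p(w^p)}\|_{L^s(\mu)}$; a change of variables $r\mapsto 2r$ and $\mu\in\Delta_2$ then recover the $(L^p,L^s)^\alpha(w^p,w^q;\mu)$ norm.

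The global part is where the real work lies. For $x\in B$ and $y'\in(2B)^c$, we have $|x-y'|\sim|y'-c_B|$ (where $c_B=y$), so
\begin{equation*}
\big|I_\gamma f_2(x)\big|\le C\int_{(2B)^c}\frac{|f(z)|}{|z-c_B|^{n-\gamma}}\,dz
\le C\sum_{j=1}^\infty\frac{1}{|2^{j+1}B|^{1-\gamma/n}}\int_{2^{j+1}B}|f(z)|\,dz,
\end{equation*}
by the standard annular decomposition $(2B)^c=\bigcup_{j\ge1}(2^{j+1}B\setminus 2^jB)$. Each annular average is estimated by Hölder's inequality in $L^p(w^p)$ against the $A_{p,q}$ condition: $\int_{2^{j+1}B}|f|\,dz\le \|f\chi_{2^{j+1}B}\|_{L^p(w^p)}\big(\int_{2^{j+1}B}w^{-p'}\big)^{1/p'}$, and the $A_{p,q}$ bound lets us replace $\big(\int_{2^{j+1}B}w^{-p'}\big)^{1/p'}$ by a constant times $|2^{j+1}B|^{1+1/p'}\,w^q(2^{j+1}B)^{-1/q}$ (this is the precise form of $A_{p,q}$ after reorganizing, using $-p'\cdot(1/q)\cdot q=\dots$; one checks the exponents match $1/p'+1/q=1-\gamma/n+\gamma/n\cdot(\text{stuff})$ — the key identity being $1/q=1/p-\gamma/n$). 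Combining, $|I_\gamma f_2(x)|\lesssim\sum_j w^q(2^{j+1}B)^{-1/q}\|f\chi_{2^{j+1}B}\|_{L^p(w^p)}$, uniformly in $x\in B$, hence $\|I_\gamma f_2\cdot\chi_B\|_{L^q(w^q)}\lesssim w^q(B)^{1/q}\sum_j w^q(2^{j+1}B)^{-1/q}\|f\chi_{2^{j+1}B}\|_{L^p(w^p)}$.

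Now multiply by $w^q(B)^{1/\beta-1/q-1/s}$ and take the $L^s(\mu)$ norm in $y$. For each fixed $j$, the inner summand, after reinserting $w^p$-norms, is bounded by $\big(w^q(B)/w^q(2^{j+1}B)\big)^{1/\alpha-1/q-1/s+\gamma/n}$ times the $j$-th dilate of the defining quantity for the $(L^p,L^s)^\alpha(w^p,w^q;\mu)$ norm — and since $1/\alpha-1/q-1/s+\gamma/n=1/\beta-1/q-1/s+\text{(correction)}$, one arranges that the ratio $w^q(B)/w^q(2^{j+1}B)$ appears to a positive power $\theta=\theta(\alpha,\beta,s)>0$ (this positivity is exactly where $\alpha<\beta$ is used). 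By \eqref{compare} applied to $w^q\in A_\infty$, $\big(w^q(B)/w^q(2^{j+1}B)\big)^\theta\le C\, 2^{-jn\delta\theta}$ for some $\delta>0$, which is summable in $j$. Pulling the $L^s(\mu)$ norm inside the sum (triangle inequality) and handling the dilation $B(y,r)\mapsto B(y,2^{j+1}r)$ by $\mu\in\Delta_2$ — which costs at most $C^{j/s}$, absorbed by the geometric decay provided $\delta\theta$ is large enough, or more carefully by choosing to sum only after noting the $\Delta_2$ constant is fixed — completes the bound. The main obstacle is precisely this last balancing act: verifying that the exponent $\theta$ coming from the mismatch between $\beta$ on the target side and $\alpha$ on the source side is strictly positive and that the resulting geometric decay $2^{-jn\delta\theta}$ dominates the $\Delta_2$-growth $C^{j/s}$ of $\mu$ along dilates; this requires care with the three interrelated scaling identities $1/q=1/p-\gamma/n$, $1/\beta=1/\alpha-\gamma/n$, and the homogeneity exponent $1/\alpha-1/p-1/s$, and is the step I would write out in full detail.
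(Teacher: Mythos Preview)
Your decomposition and the overall strategy match the paper's proof exactly: split $f=f_1+f_2$ at $2B$, use Theorem~\ref{strong} on $f_1$, do the annular decomposition on $f_2$, apply H\"older together with the $A_{p,q}$ condition on each annulus, and sum a geometric series coming from the $A_\infty$ property of $w^q$. So the approach is right.

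Where you go astray is in what you flag as the ``main obstacle.'' There is \emph{no} balancing act between a $\Delta_2$-growth $C^{j/s}$ of $\mu$ and the geometric decay, because no such growth ever appears. The amalgam norm is defined as a \emph{supremum over all radii}:
\[
\big\|f\big\|_{(L^p,L^s)^{\alpha}(w^p,w^q;\mu)}
=\sup_{\rho>0}\Big\|w^q(B(y,\rho))^{1/\alpha-1/p-1/s}\big\|f\cdot\chi_{B(y,\rho)}\big\|_{L^p(w^p)}\Big\|_{L^s(\mu)}.
\]
Hence for every $j$ the quantity
$\big\|w^q(B(y,2^{j+1}r))^{1/\alpha-1/p-1/s}\|f\cdot\chi_{B(y,2^{j+1}r)}\|_{L^p(w^p)}\big\|_{L^s(\mu)}$
is bounded by $\|f\|_{(L^p,L^s)^{\alpha}(w^p,w^q;\mu)}$ outright, \emph{uniformly in $j$}, with no change of variable in $y$ and no doubling of $\mu$ required. (In fact $\mu\in\Delta_2$ is not used at all in the paper's argument for this theorem.) Once you see this, the only thing left to sum is $\sum_j\big(w^q(B)/w^q(2^{j+1}B)\big)^{\theta}$, and the exponent bookkeeping is clean: from $1/q=1/p-\gamma/n$ and $1/\beta=1/\alpha-\gamma/n$ one gets the single identity
\[
\frac{1}{\beta}-\frac{1}{q}-\frac{1}{s}=\frac{1}{\alpha}-\frac{1}{p}-\frac{1}{s},
\]
so after factoring, the ratio appears to the power $\theta=1/\beta-1/s$, which is positive precisely because $\beta<s$. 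Then \eqref{compare} gives $\big(w^q(B)/w^q(2^{j+1}B)\big)^{\theta}\le C\,2^{-(j+1)n\delta\theta}$, summable in $j$, and you are done. Your $A_{p,q}$ computation also needs tidying: the correct output is $\big(\int_{2^{j+1}B}w^{-p'}\big)^{1/p'}\le C\,|2^{j+1}B|^{1-\gamma/n}\,w^q(2^{j+1}B)^{-1/q}$ (since $1/p'+1/q=1-\gamma/n$), not $|2^{j+1}B|^{1+1/p'}$.
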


\begin{theorem}\label{mainthm:2}
Let $0<\gamma<n$, $p=1$, $q=n/{(n-\gamma)}$ and $w\in A_{1,q}$. Assume that $1\leq\alpha<\beta<s\leq\infty$ and $\mu\in\Delta_2$, then the fractional integral operator $I_{\gamma}$ is bounded from $(L^1,L^s)^{\alpha}(w,w^q;\mu)$ into $(WL^q,L^s)^{\beta}(w^q;\mu)$ with $1/{\beta}=1/{\alpha}-{\gamma}/n$.
\end{theorem}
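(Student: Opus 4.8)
The plan is to adapt to the amalgam setting the standard local/global decomposition used for proving mapping properties of fractional integrals on Morrey-type spaces. Fix $r>0$ and $y\in\mathbb R^n$, write $B=B(y,r)$, and split $f=f\cdot\chi_{2B}+f\cdot\chi_{(2B)^c}=:f_1+f_2$. By the quasi-triangle inequality in $WL^q(w^q)$ (note $q=n/(n-\gamma)>1$), it suffices to estimate $\|(I_\gamma f_1)\chi_B\|_{WL^q(w^q)}$ and $\|(I_\gamma f_2)\chi_B\|_{WL^q(w^q)}$ separately, multiply each by the normalizing factor $w^q(B)^{1/\beta-1/q-1/s}$, and bound the $L^s(\mu,dy)$ norm of the result uniformly in $r>0$. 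Two arithmetic facts will be used throughout: since $q=n/(n-\gamma)$ one has $1/q=1-\gamma/n$, so the target exponent satisfies $1/\beta-1/q-1/s=1/\alpha-1-1/s$, which is exactly the source exponent attached to $(L^1,L^s)^\alpha(w,w^q;\mu)$; and $|B(y,2^{k+1}r)|^{1/q}(2^kr)^{-(n-\gamma)}\approx 1$ since $n/q=n-\gamma$.

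For the local part, Theorem~\ref{weak} (the weak $(1,q)$ estimate, available because $w\in A_{1,q}$) gives $\|(I_\gamma f_1)\chi_B\|_{WL^q(w^q)}\le\|I_\gamma f_1\|_{WL^q(w^q)}\le C\|f\cdot\chi_{2B}\|_{L^1(w)}$. Multiplying by $w^q(B)^{1/\beta-1/q-1/s}=w^q(B)^{1/\alpha-1-1/s}$ and using that $w^q\in A_1\subset\Delta_2$ together with $1/\alpha-1-1/s\le0$, this is at most $C\,w^q(2B)^{1/\alpha-1-1/s}\|f\cdot\chi_{2B}\|_{L^1(w)}$. Taking the $L^s(\mu,dy)$ norm, the resulting quantity is precisely the one appearing in Definition~\ref{amalgam} for the space $(L^1,L^s)^\alpha(w,w^q;\mu)$ with radius $2r$ in place of $r$, hence bounded by $C\|f\|_{(L^1,L^s)^\alpha(w,w^q;\mu)}$, uniformly in $r$.

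For the global part, decompose $(2B)^c=\bigcup_{k\ge1}(B(y,2^{k+1}r)\setminus B(y,2^kr))$; for $x\in B$ and $z$ in the $k$-th annulus one has $|x-z|\approx 2^kr$, so $|I_\gamma f_2(x)|\le C\sum_{k\ge1}(2^kr)^{-(n-\gamma)}\int_{B(y,2^{k+1}r)}|f(z)|\,dz$. Here $w\in A_{1,q}$ enters a second time, through $\operatorname{ess\,sup}_{B(y,2^{k+1}r)}w^{-1}\le C(|B(y,2^{k+1}r)|/w^q(B(y,2^{k+1}r)))^{1/q}$, which yields $\int_{B(y,2^{k+1}r)}|f|\le C\,|B(y,2^{k+1}r)|^{1/q}w^q(B(y,2^{k+1}r))^{-1/q}\|f\chi_{B(y,2^{k+1}r)}\|_{L^1(w)}$; combined with the arithmetic fact above, the factor $(2^kr)^{-(n-\gamma)}|B(y,2^{k+1}r)|^{1/q}$ becomes a harmless constant. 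Hence $|I_\gamma f_2|\le M_y:=C\sum_{k\ge1}w^q(B(y,2^{k+1}r))^{-1/q}\|f\chi_{B(y,2^{k+1}r)}\|_{L^1(w)}$ on $B$, and therefore $\|(I_\gamma f_2)\chi_B\|_{WL^q(w^q)}\le M_y\,w^q(B)^{1/q}$. Multiplying by $w^q(B)^{1/\beta-1/q-1/s}$ one is left with $w^q(B)^{1/\beta-1/s}\sum_k w^q(B(y,2^{k+1}r))^{-1/q}\|f\chi_{B(y,2^{k+1}r)}\|_{L^1(w)}$; inside the sum write $w^q(B)^{1/\beta-1/s}=(w^q(B)/w^q(B(y,2^{k+1}r)))^{1/\beta-1/s}\,w^q(B(y,2^{k+1}r))^{1/\beta-1/s}$ and apply the $A_\infty$ inequality \eqref{compare} (valid since $w^q\in A_\infty$), with $|B(y,r)|/|B(y,2^{k+1}r)|=2^{-(k+1)n}$, to gain a geometric factor $2^{-(k+1)\varepsilon}$, $\varepsilon=n\delta(1/\beta-1/s)>0$, in front of $w^q(B(y,2^{k+1}r))^{1/\alpha-1-1/s}\|f\chi_{B(y,2^{k+1}r)}\|_{L^1(w)}$. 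Finally, Minkowski's inequality in $L^s(\mu,dy)$ (legitimate since $s>\alpha\ge1$), Definition~\ref{amalgam} with radius $2^{k+1}r$, and summation of the convergent series $\sum_k 2^{-(k+1)\varepsilon}$ complete the bound; the case $s=\infty$ is handled identically with $1/s=0$.

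I expect the only real obstacle to be this last point: convergence of the sum over the annuli after taking the $L^s(\mu)$ norm is precisely what the strict inequality $\beta<s$ provides via \eqref{compare} (if $\beta=s$, the gain $2^{-(k+1)\varepsilon}$ degenerates to $1$ and the series diverges). The remaining ingredients --- the weak $(1,q)$ inequality of Theorem~\ref{weak}, the doubling and reverse-doubling of $w^q$ guaranteed by $w^q\in A_1$, and the elementary annular geometry --- are routine. Two minor remarks: the constraint $\alpha<\beta$ is automatic from $1/\beta=1/\alpha-\gamma/n$ with $\gamma>0$ (while $\alpha<n/\gamma$ is what makes $\beta$ finite), so the effective assumptions are $1\le\alpha<n/\gamma$ and $\beta<s$; and the doubling hypothesis $\mu\in\Delta_2$ does not appear to be needed in the scheme sketched above, since the radius-$2^{k+1}r$ version of the source norm can be invoked directly, though it is harmless to retain it among the standing assumptions on these spaces.
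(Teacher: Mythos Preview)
Your proposal is correct and follows essentially the same approach as the paper: the same local/global split $f=f_1+f_2$, Theorem~\ref{weak} for the local piece, the annular pointwise estimate plus the $A_\infty$ inequality \eqref{compare} for the global piece, and Minkowski in $L^s(\mu)$ to conclude. The only noteworthy difference is a streamlining in the global estimate: where the paper factors $w^q\in A_1$ as $w\in A_1\cap RH_q$ and applies these two conditions in succession (first $A_1$ to insert $w$ into the integral, then $RH_q$ via \eqref{wq} to pass from $|B|^{\gamma/n}/w(B)$ to $w^q(B)^{-1/q}$), you read the bound $\operatorname{ess\,sup}_B w^{-1}\le C\,(|B|/w^q(B))^{1/q}$ directly off the $A_{1,q}$ definition, arriving at the same endpoint in one step; your observation that $\mu\in\Delta_2$ is not actually invoked in the argument is also accurate.
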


Let $[b,I_{\gamma}]$ be the linear commutator generated by $I_{\gamma}$ and $BMO$ function $b$. For the strong-type estimate of $[b,I_{\gamma}]$ on the weighted amalgam spaces, we have the following result:
\begin{theorem}\label{mainthm:3}
Let $0<\gamma<n$, $1<p<n/{\gamma}$, $1/q=1/p-{\gamma}/n$ and $w\in A_{p,q}$. Assume that $p\leq\alpha<\beta<s\leq\infty$, $\mu\in\Delta_2$ and $b\in BMO(\mathbb R^n)$, then the linear commutator $[b,I_{\gamma}]$ is bounded from $(L^p,L^s)^{\alpha}(w^p,w^q;\mu)$ into $(L^q,L^s)^{\beta}(w^q;\mu)$ with $1/{\beta}=1/{\alpha}-{\gamma}/n$.
\end{theorem}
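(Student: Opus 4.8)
The plan is to run the familiar local‑plus‑tail decomposition on each ball of the amalgam scale, handling the local piece with the known weighted bound of Theorem~\ref{cstrong} and the tail by a pointwise estimate on the ball. Fix $r>0$ and a center $y_0\in\mathbb R^n$, put $B=B(y_0,r)$, and split $f=f\chi_{2B}+f\chi_{(2B)^c}=:f_1+f_2$, so that $[b,I_\gamma]f\cdot\chi_B=[b,I_\gamma]f_1\cdot\chi_B+[b,I_\gamma]f_2\cdot\chi_B$. For the local part, since $w\in A_{p,q}$, Theorem~\ref{cstrong} gives $\|[b,I_\gamma]f_1\cdot\chi_B\|_{L^q(w^q)}\le\|[b,I_\gamma]f_1\|_{L^q(w^q)}\le C\|b\|_*\|f\chi_{2B}\|_{L^p(w^p)}$, and because $w^q\in A_q$ is doubling (Lemma~\ref{relation}) this is at most $C\|b\|_*\,w^q(B)^{1/q}w^q(2B)^{-1/q}\|f\chi_{2B}\|_{L^p(w^p)}$.

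For the tail I would use, for $x\in B$, the identity $[b,I_\gamma]f_2(x)=(b(x)-b_B)I_\gamma f_2(x)-I_\gamma((b-b_B)f_2)(x)$. Since $|x-y|\approx|y-y_0|$ when $x\in B$ and $y\in(2B)^c$, decomposing $(2B)^c$ into the annuli $2^{j+1}B\setminus 2^jB$ ($j\ge1$) and using $|y-y_0|\approx 2^jr$ there, Hölder's inequality together with the $A_{p,q}$ condition (and $1/p'+1/q=1-\gamma/n$) give $\int_{2^{j+1}B}|f|\le C(2^jr)^{n-\gamma}w^q(2^{j+1}B)^{-1/q}\|f\chi_{2^{j+1}B}\|_{L^p(w^p)}$, hence $|I_\gamma f_2(x)|\le C\sum_{j\ge1}w^q(2^{j+1}B)^{-1/q}\|f\chi_{2^{j+1}B}\|_{L^p(w^p)}=:C\,\mathcal S$ for every $x\in B$. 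Since $w^q\in A_\infty$, the weighted John--Nirenberg inequality yields $\|(b-b_B)\chi_B\|_{L^q(w^q)}\le C\|b\|_*w^q(B)^{1/q}$, which controls the first term by $C\|b\|_*w^q(B)^{1/q}\mathcal S$. For the second term, write $b-b_B=(b-b_{2^{j+1}B})+(b_{2^{j+1}B}-b_B)$ on $2^{j+1}B$, use $|b_{2^{j+1}B}-b_B|\le C(j+1)\|b\|_*$, and estimate $\int_{2^{j+1}B}|b-b_{2^{j+1}B}||f|$ by Hölder against $w$ and $w^{-1}$ followed by the weighted John--Nirenberg inequality for $w^{-p'}\in A_{p'}\subset A_\infty$ (Lemma~\ref{relation}) and the $A_{p,q}$ condition; this gives $\int_{2^{j+1}B}|b-b_B||f|\le C(j+1)\|b\|_*(2^jr)^{n-\gamma}w^q(2^{j+1}B)^{-1/q}\|f\chi_{2^{j+1}B}\|_{L^p(w^p)}$. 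Collecting the local and tail bounds produces the key estimate
\[
\big\|[b,I_\gamma]f\cdot\chi_B\big\|_{L^q(w^q)}\le C\|b\|_*\,w^q(B)^{1/q}\sum_{k\ge1}c_k\,\frac{\|f\chi_{2^kB}\|_{L^p(w^p)}}{w^q(2^kB)^{1/q}},\qquad c_k\le Ck .
\]

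To finish, multiply by the target scaling factor $w^q(B)^{1/\beta-1/q-1/s}$; using $1/\beta=1/\alpha-\gamma/n$ and $1/p-1/q=\gamma/n$ one checks that $-(1/\alpha-1/p-1/s)-1/q=-(1/\beta-1/s)$, so the $k$‑th summand equals $\big(w^q(B)/w^q(2^kB)\big)^{1/\beta-1/s}$ times the source‑space integrand $w^q(2^kB)^{1/\alpha-1/p-1/s}\|f\chi_{2^kB}\|_{L^p(w^p)}$ at radius $2^kr$ and center $y_0$. Since $\beta<s$ we have $1/\beta-1/s>0$, and since $w^q\in A_\infty$ the self‑improving estimate \eqref{compare} gives $w^q(B)/w^q(2^kB)\le C2^{-kn\delta}$ for some $\delta>0$, so the $k$‑th summand carries the factor $2^{-kn\delta(1/\beta-1/s)}$. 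Taking the $L^s(\mu)$ norm in $y_0$, passing it inside the sum by Minkowski's inequality, and bounding each resulting term by $\|f\|_{(L^p,L^s)^\alpha(w^p,w^q;\mu)}$ (by the very definition of the source norm, with $r$ replaced by $2^kr$ in the supremum), we are left with $C\|b\|_*\|f\|_{(L^p,L^s)^\alpha(w^p,w^q;\mu)}\sum_{k\ge1}c_k2^{-kn\delta(1/\beta-1/s)}$, a convergent series since $c_k$ grows only linearly. This bound is uniform in $r$, so taking the supremum over $r>0$ gives the claim; the case $s=\infty$ requires only the obvious modification. (Throughout, one may first assume $f$ bounded with compact support so that all integrals converge absolutely, and then pass to general $f$ via the resulting a priori estimate.)

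The main obstacle is the weighted treatment of the $BMO$ oscillation in the tail — proving $\big(\int_{2^{j+1}B}|b-b_{2^{j+1}B}|^{p'}w^{-p'}\big)^{1/p'}\le C\|b\|_*w^{-p'}(2^{j+1}B)^{1/p'}$ uniformly in $j$ (which rests on the weighted John--Nirenberg inequality for $w^{-p'}\in A_{p'}$, using \eqref{compare} to pass from Lebesgue level sets to $w^{-p'}$‑level sets), and the analogous bound for $(b-b_B)$ against $w^q$ — and then arranging the bookkeeping so that the unavoidable linear‑in‑$k$ loss from the mean differences $b_{2^kB}-b_B$ is dominated by the geometric decay $2^{-kn\delta(1/\beta-1/s)}$ extracted from the $A_\infty$ property of $w^q$ and the strict inequality $\beta<s$. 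Apart from these two extra $BMO$ factors, the argument is exactly the mechanism already used for Theorem~\ref{mainthm:1}.
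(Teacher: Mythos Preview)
Your proposal is correct and follows essentially the same route as the paper: the same local/tail split, Theorem~\ref{cstrong} for the local piece, the commutator identity and annular decomposition for the tail, the two BMO inputs of Lemma~\ref{BMO} (weighted $L^q(w^q)$ and $L^{p'}(w^{-p'})$ oscillation bounds plus the $(j+1)\|b\|_*$ mean-difference estimate), and finally the $A_\infty$ decay \eqref{compare} to sum the geometric series against the linear growth. The only cosmetic difference is that the paper names the three tail pieces $J_3,J_4,J_5$ separately, whereas you package them into a single $c_k\le Ck$ coefficient before summing.
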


To obtain endpoint estimate for the linear commutator $[b,I_{\gamma}]$, we first need to define the weighted $\mathcal A$-average of a function $f$ over a ball $B$ by means of the weighted Luxemburg norm; that is, given a Young function $\mathcal A$ and $w\in A_\infty$, we define (see \cite{rao} for instance)
\begin{equation*}
\big\|f\big\|_{\mathcal A(w),B}:=\inf\left\{\lambda>0:\frac{1}{w(B)}
\int_B\mathcal A\left(\frac{|f(x)|}{\lambda}\right)\cdot w(x)\,dx\leq1\right\}.
\end{equation*}
When $\mathcal A(t)=t$, this norm is denoted by $\|\cdot\|_{L(w),B}$, and when $\Phi(t)=t\cdot(1+\log^+t)$, this norm is also denoted by $\|\cdot\|_{L\log L(w),B}$. The complementary Young function is given by $\bar{\Phi}(t)\approx e^t-1$ with corresponding mean Luxemburg norm denoted by $\|\cdot\|_{\exp L(w),B}$. For $w\in A_\infty$ and for every ball $B$ in $\mathbb R^n$, we can also show the weighted version of \eqref{holder}. Namely, the following generalized H\"older's inequality in the weighted setting
\begin{equation}\label{Wholder}
\frac{1}{w(B)}\int_B|f(x)\cdot g(x)|w(x)\,dx\leq C\big\|f\big\|_{L\log L(w),B}\big\|g\big\|_{\exp L(w),B}
\end{equation}
is true (see \cite{zhang} for instance). Now we introduce new amalgam spaces of $L\log L$ type as follows.
\begin{defn}
Let $p=1$, $1\leq\alpha\leq s\leq\infty$, and let $\nu,w,\mu$ be three weights on $\mathbb R^n$. We denote by $(L\log L,L^s)^{\alpha}(\nu,w;\mu)$ the weighted amalgam space of $L\log L$ type, the space of all locally integrable functions $f$ defined on $\mathbb R^n$ with finite norm $\big\|f\big\|_{(L\log L,L^s)^{\alpha}(\nu,w;\mu)}$.
\begin{equation*}
(L\log L,L^s)^{\alpha}(\nu,w;\mu):=\left\{f:\big\|f\big\|_{(L\log L,L^s)^{\alpha}(\nu,w;\mu)}<\infty\right\},
\end{equation*}
where
\begin{equation*}
\begin{split}
\big\|f\big\|_{(L\log L,L^s)^{\alpha}(\nu,w;\mu)}
:=&\sup_{r>0}\left\{\int_{\mathbb R^n}
\Big[w(B(y,r))^{1/{\alpha}-1-1/s}\nu(B(y,r))\big\|f\big\|_{L\log L(\nu),B(y,r)}\Big]^s\mu(y)\,dy\right\}^{1/s}\\
=&\sup_{r>0}\Big\|w(B(y,r))^{1/{\alpha}-1-1/s}\nu(B(y,r))\big\|f\big\|_{L\log L(\nu),B(y,r)}\Big\|_{L^s(\mu)}.
\end{split}
\end{equation*}
\end{defn}
Note that $t\leq t\cdot(1+\log^+t)$ for all $t>0$. Then for any ball $B$ in $\mathbb R^n$ and $\nu\in A_\infty$, it is immediate that $\big\|f\big\|_{L(v),B}\leq \big\|f\big\|_{L\log L(v),B}$ by definition, i.e., the inequality
\begin{equation}\label{main esti1}
\big\|f\big\|_{L(\nu),B}=\frac{1}{\nu(B)}\int_B|f(x)|\cdot\nu(x)\,dx\leq\big\|f\big\|_{L\log L(\nu),B}
\end{equation}
holds for any ball $B$ in $\mathbb R^n$. From this, we can further see the following inclusion:
\begin{equation*}
(L\log L,L^s)^{\alpha}(\nu,w;\mu)\subseteq(L^1,L^s)^{\alpha}(\nu,w;\mu),
\end{equation*}
when $1\leq\alpha\leq s\leq\infty$ and $w,\mu$ are some other weights.

In the endpoint case $p=1$, we will prove the following weak-type $L\log L$ estimate of linear commutator $[b,I_{\gamma}]$ in the setting of weighted amalgam spaces.

\begin{theorem}\label{mainthm:4}
Let $0<\gamma<n$, $p=1$, $q=n/{(n-\gamma)}$ and $w\in A_{1,q}$. Assume that $1\leq\alpha<\beta<s\leq\infty$, $\mu\in\Delta_2$ and $b\in BMO(\mathbb R^n)$, then for any given $\lambda>0$ and any ball $B(y,r)$ in $\mathbb R^n$, there exists a constant $C>0$ independent of $f$, $B(y,r)$ and $\lambda>0$ such that
\begin{equation*}
\begin{split}
&\Big\|w^q(B(y,r))^{1/{\beta}-1/q-1/s}\cdot \Big[w^q\big(\big\{x\in B(y,r):\big|[b,I_{\gamma}](f)(x)\big|>\lambda\big\}\big)\Big]^{1/q}\Big\|_{L^s(\mu)}\\
&\leq C\cdot\bigg\|\Phi\left(\frac{|f|}{\,\lambda\,}\right)\bigg\|_{(L\log L,L^s)^{\alpha}(w,w^q;\mu)},
\end{split}
\end{equation*}
where $\Phi(t)=t\cdot(1+\log^+t)$ and $1/{\beta}=1/{\alpha}-{\gamma}/n$. From the above definitions, we can roughly say that the linear commutator $[b,I_{\gamma}]$ is bounded from $(L\log L,L^s)^{\alpha}(w,w^q;\mu)$ into $(WL^q,L^s)^{\beta}(w^q;\mu)$.
\end{theorem}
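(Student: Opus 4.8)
The plan is to follow the classical CZ/good-$\lambda$-free route for endpoint commutator estimates, localized to each ball $B(y,r)$, and then to assemble the local estimates via the $L^s(\mu)$ norm exactly as in the proofs of Theorems \ref{mainthm:2} and \ref{mainthm:3}. Fix $\lambda>0$ and a ball $B=B(y,r)$. As is standard, I would first reduce to $\lambda=1$ by homogeneity, writing $f_\lambda=f/\lambda$; then the right-hand side becomes $\|\Phi(|f_\lambda|)\|_{(L\log L,L^s)^\alpha(w,w^q;\mu)}$. Next split $f_\lambda=f_1+f_2$ with $f_1=f_\lambda\chi_{2B}$ and $f_2=f_\lambda\chi_{(2B)^c}$, and estimate the level set of $[b,I_\gamma]f_\lambda$ on $B$ by the sum of the level sets (at height $1/2$) of $[b,I_\gamma]f_1$ and $[b,I_\gamma]f_2$.

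For the local part $[b,I_\gamma]f_1$, I would invoke Theorem \ref{cweak} (the Cruz-Uribe--Fiorenza weighted endpoint estimate on bounded domains, applicable since $w\in A_{1,q}$ gives $w^q\in A_1$ by Lemma \ref{relation}) with $\Omega=B$, yielding
\[
\Big[w^q\big(\{x\in B:|[b,I_\gamma]f_1(x)|>1/2\}\big)\Big]^{1/q}\le C\int_{2B}\Phi(|f_\lambda(x)|)\,w(x)\,dx
= C\,w(2B)\,\|\Phi(|f_\lambda|)\|_{L(w),2B},
\]
and then bound $\|\Phi(|f_\lambda|)\|_{L(w),2B}\le\|\Phi(|f_\lambda|)\|_{L\log L(w),2B}$ using \eqref{main esti1}. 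For the global part $[b,I_\gamma]f_2$, for $x\in B$ one writes $[b,I_\gamma]f_2(x)=(b(x)-b_B)I_\gamma f_2(x)-I_\gamma((b-b_B)f_2)(x)$ and estimates pointwise: using $|x-z|\sim|y-z|$ for $z\in(2B)^c$, a standard annular decomposition $\sum_{k\ge1}\int_{2^{k+1}B\setminus 2^kB}$ combined with the generalized Hölder inequality \eqref{Wholder} (for the term with $b-b_B$, controlling $\|b-b_{2^{k+1}B}\|_{\exp L(w^q),2^{k+1}B}\lesssim\|b\|_*$ times a factor linear in $k$), together with the $A_\infty$ comparison \eqref{compare} and the reverse Hölder / $A_{p,q}$ relations to pass between $w$-, $w^q$-measures and Lebesgue measure, gives a pointwise bound of the form $|[b,I_\gamma]f_2(x)|\le C\sum_{k\ge1}(k+1)\cdot w^q(2^{k+1}B)^{-1/q}\cdot w(2^{k+1}B)\cdot\|\Phi(|f_\lambda|)\|_{L\log L(w),2^{k+1}B}$, after which Chebyshev on the ball $B$ converts this into the desired $w^q$-level-set bound; the extra factor $(k+1)$ is summable against the geometric decay $w^q(2^{k+1}B)^{-1/q}w^q(B)^{1/q}$ coming from \eqref{compare}.

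Having produced, for each $r>0$ and each $y$, the estimate
\[
w^q(B(y,r))^{1/\beta-1/q-1/s}\Big[w^q\big(\{x\in B(y,r):|[b,I_\gamma]f(x)|>\lambda\}\big)\Big]^{1/q}
\le C\sum_{k\ge0}(k+1)2^{-k\theta}\,w^q(2^{k+1}B)^{1/\beta-1-1/s}w(2^{k+1}B)\,\|\Phi(|f_\lambda|)\|_{L\log L(w),2^{k+1}B}
\]
for some $\theta>0$ (here $1/\beta=1/\alpha-\gamma/n$ and one uses $w^q(2^{k+1}B)^{1/\beta-1/q}\lesssim w^q(2^{k+1}B)^{1/\alpha-1}\cdot(\text{a power of }2^{k})$, exploiting $\beta>\alpha$ to absorb powers of $2^k$ — this is precisely where the strict inequality $\alpha<\beta$ is used, just as in Theorems \ref{mainthm:1}--\ref{mainthm:3}), I would take the $L^s(\mu)$ norm in $y$, use Minkowski's integral inequality to pull the sum over $k$ outside, and apply $\mu\in\Delta_2$ to control $\|g(2^{k+1}B(\cdot,r))\|_{L^s(\mu)}$ by $C^k\|g(B(\cdot,r))\|_{L^s(\mu)}$-type bounds (more precisely, a change of variables together with the doubling of $\mu$); the surviving geometric factor from $\theta>0$ and $\beta>\alpha$ dominates the doubling constant of $\mu$ after possibly shrinking to the relevant range, so the $k$-sum converges and the supremum over $r>0$ is exactly $\|\Phi(|f_\lambda|)\|_{(L\log L,L^s)^\alpha(w,w^q;\mu)}$, completing the proof.

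The main obstacle I anticipate is the global term: keeping careful track of the logarithmic ($k+1$) loss from the $BMO$ oscillation of $b$ on the dilated balls and verifying that, after multiplying by the normalizing weight powers $w^q(2^{k+1}B)^{1/\beta-1-1/s}w(2^{k+1}B)$ and applying \eqref{compare}, one still has a genuinely summable series with enough room to absorb the doubling constant of $\mu$ once the $L^s(\mu)$ norm is taken. The condition $\alpha<\beta<s\le\infty$ is what creates this room, but the bookkeeping — converting every $w$- and $w^q$-average on $2^{k+1}B$ back to $B$ with the right exponents — is the delicate part; the local term is essentially immediate from Theorem \ref{cweak} and \eqref{main esti1}.
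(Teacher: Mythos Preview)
Your approach is the paper's: split $f=f_1+f_2$ on $2B$, apply Theorem~\ref{cweak} to the local piece, bound the global piece pointwise via the decomposition $[b,I_\gamma]f_2=(b-b_B)I_\gamma f_2-I_\gamma((b-b_B)f_2)$ with an annular sum, then use Chebyshev, Lemma~\ref{BMO}, the weighted generalized H\"older inequality \eqref{Wholder}, and finally take the $L^s(\mu)$-norm and Minkowski.

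Two points in your bookkeeping need correcting, and they dissolve the ``main obstacle'' you flagged. First, $1/\beta-1/q=1/\alpha-1$ is an \emph{equality} (from $1/\beta=1/\alpha-\gamma/n$ and $1/q=1-\gamma/n$), so no stray power of $2^k$ appears there; the geometric decay comes solely from the ratio $w^q(B(y,r))^{1/\beta-1/s}\big/w^q(B(y,2^{j+1}r))^{1/\beta-1/s}$, and it is the hypothesis $\beta<s$ (not $\alpha<\beta$) that makes $1/\beta-1/s>0$. Second, no change of variables in $y$ and no competition with the doubling constant of $\mu$ ever arises: by \eqref{compare} that weight ratio is bounded by $C\,2^{-(j+1)n\delta^\ast(1/\beta-1/s)}$ \emph{uniformly in $y$}, so it pulls directly outside the $L^s(\mu)$-norm; what remains inside is $\big\|w^q(B(y,2^{j+1}r))^{1/\alpha-1-1/s}w(B(y,2^{j+1}r))\|\Phi(|f|/\lambda)\|_{L\log L(w),B(y,2^{j+1}r)}\big\|_{L^s(\mu)}$, which is at most $\|\Phi(|f|/\lambda)\|_{(L\log L,L^s)^\alpha(w,w^q;\mu)}$ simply by taking radius $2^{j+1}r$ in the supremum defining that norm. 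There is nothing to absorb from $\mu$.
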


Moreover, we will discuss the extreme case $\beta=s$ of Theorem \ref{mainthm:1}. In order to do so, we need to introduce new $BMO$-type space given below.
\begin{defn}
Let $1\leq s\leq\infty$ and $\mu\in\Delta_2$. We define the space $(BMO,L^s)(\mu)$ as the set of all locally integrable functions $f$ satisfying $\|f\|_{**}<\infty$, where
\begin{equation}\label{BMOw}
\|f\|_{**}:=\sup_{r>0}\bigg\|\frac{1}{|B(y,r)|}\int_{B(y,r)}\big|f(x)-f_{B(y,r)}\big|\,dx\bigg\|_{L^s(\mu)}.
\end{equation}
Here the $L^s(\mu)$-norm is taken with respect to the variable $y$. We also use the notation $f_{B(y,r)}$ to denote the mean value of $f$ over $B(y,r)$.
\end{defn}

Observe that if $s=\infty$, then $(BMO,L^s)(\mu)$ is the classical $BMO$ space.

Now we can show that $I_{\gamma}$ is bounded from $(L^p,L^s)^{\alpha}(w^p,w^q;\mu)$ into our new $BMO$-type space defined above. This result can be regarded as a supplement of Theorem \ref{mainthm:1}.
\begin{theorem}\label{mainthm:end}
Let $0<\gamma<n$, $1<p<n/{\gamma}$, $1/q=1/p-{\gamma}/n$, and let $w\in A_{p,q}$ and $\mu\in\Delta_2$. If $p\leq\alpha<s\leq\infty$ and $1/{s}=1/{\alpha}-{\gamma}/n$, then the fractional integral operator $I_{\gamma}$ is bounded from $(L^p,L^s)^{\alpha}(w^p,w^q;\mu)$ into $(BMO,L^s)(\mu)$.
\end{theorem}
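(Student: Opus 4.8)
The plan is to fix a ball $B=B(y,r)$, decompose $f=f_1+f_2$ with $f_1=f\chi_{2B}$ and $f_2=f\chi_{\mathbb R^n\setminus 2B}$, and control the mean oscillation of $I_\gamma f$ over $B$ by treating the local piece $I_\gamma f_1$ and the distant piece $I_\gamma f_2$ separately, then taking the $L^s(\mu)$-norm in $y$ and the supremum over $r>0$. The key preliminary remark is the borderline exponent identity: since $1/q=1/p-\gamma/n$ and $1/s=1/{\alpha}-\gamma/n$, one has $1/{\alpha}-1/p-1/s=-1/q$; thus the weight power occurring in the norm of $(L^p,L^s)^{\alpha}(w^p,w^q;\mu)$ is simply $w^q(B(y,r))^{-1/q}$, and this degenerate value is exactly what forces the target to be a $BMO$-type space. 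Throughout we use that $w\in A_{p,q}$ implies $w^q\in A_q$ (so $w^q$ is doubling) and $w^{-p'}\in A_{p'}$, by Lemma \ref{relation}.

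For the local piece, estimate $\frac1{|B|}\int_B|I_\gamma f_1-(I_\gamma f_1)_B|\,dx\le\frac2{|B|}\int_B|I_\gamma f_1|\,dx$, then apply H\"older's inequality with exponents $q,q'$ after writing $|I_\gamma f_1|=(|I_\gamma f_1|w)\cdot w^{-1}$; this produces the factor $\big(\int_B|I_\gamma f_1|^qw^q\big)^{1/q}\le C\|f\chi_{2B}\|_{L^p(w^p)}$ by the Muckenhoupt--Wheeden bound (Theorem \ref{strong}) together with $\big(\int_B w^{-q'}\big)^{1/{q'}}$, which the $A_q$ condition for $w^q$ controls by $C\,|B|\,w^q(B)^{-1/q}$. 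After dividing by $|B|$ and using the doubling of $w^q$ one obtains $\frac1{|B|}\int_B|I_\gamma f_1|\,dx\le C\,w^q(2B)^{-1/q}\|f\chi_{2B}\|_{L^p(w^p)}=C\,w^q(2B)^{1/{\alpha}-1/p-1/s}\|f\chi_{2B}\|_{L^p(w^p)}$, which is already in the right form.

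For the distant piece, use $\frac1{|B|}\int_B|I_\gamma f_2-(I_\gamma f_2)_B|\,dx\le\sup_{x,x'\in B}|I_\gamma f_2(x)-I_\gamma f_2(x')|$ together with the standard kernel-smoothness estimate: for $x,x'\in B$ and $z\notin 2B$ one has $|x-z|\approx|x'-z|\approx|z-y|\ge 2r>|x-x'|$, hence $\big||x-z|^{-(n-\gamma)}-|x'-z|^{-(n-\gamma)}\big|\le C\,r\,|z-y|^{-(n-\gamma+1)}$ and therefore $|I_\gamma f_2(x)-I_\gamma f_2(x')|\le C\,r\int_{\mathbb R^n\setminus 2B}\frac{|f(z)|}{|z-y|^{n-\gamma+1}}\,dz$. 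Splitting this region into dyadic annuli $2^{j+1}B\setminus 2^jB$ ($j\ge1$), using $|z-y|\approx 2^jr$ there, applying H\"older with exponents $p,p'$, and bounding $\big(\int_{2^{j+1}B}w^{-p'}\big)^{1/{p'}}\le C\,|2^{j+1}B|^{1-\gamma/n}\,w^q(2^{j+1}B)^{-1/q}$ via the $A_{p,q}$ condition (note $1/{p'}+1/q=1-\gamma/n$), all powers of $2^jr$ cancel and one is left with $|I_\gamma f_2(x)-I_\gamma f_2(x')|\le C\sum_{j\ge1}2^{-j}\,w^q(2^{j+1}B)^{-1/q}\|f\chi_{2^{j+1}B}\|_{L^p(w^p)}$.

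Collecting the two estimates, taking the $L^s(\mu)$-norm in $y$, and moving it inside the $j$-sum by Minkowski's inequality (legitimate since $s\ge1$; when $s=\infty$ the norm is an essential supremum and the same steps apply), each resulting summand is dominated by $\|f\|_{(L^p,L^s)^{\alpha}(w^p,w^q;\mu)}$ — after replacing $r$ by $2^{j+1}r$ in the supremum defining that norm, using $w^q(2^{j+1}B)^{-1/q}=w^q(2^{j+1}B)^{1/{\alpha}-1/p-1/s}$ — while $\sum_j 2^{-j}<\infty$; taking $\sup_{r>0}$ then gives $\|I_\gamma f\|_{**}\le C\|f\|_{(L^p,L^s)^{\alpha}(w^p,w^q;\mu)}$, which is the claimed boundedness (one also checks along the way that $I_\gamma f$ is well defined, since $f$ is locally in $L^p(w^p)\subset L^1_{loc}$ and the tail converges by the distant-piece estimate). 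I expect the principal technical point to be the local piece: arranging the weighted H\"older inequality and the interplay of the $A_{p,q}$, $A_q$ and doubling properties so that the spurious powers of $|B|$ cancel and the output is precisely $w^q(2B)^{-1/q}\|f\chi_{2B}\|_{L^p(w^p)}$ — once the borderline identity $1/{\alpha}-1/p-1/s=-1/q$ is recognized, everything else is routine bookkeeping parallel to the proof of Theorem \ref{mainthm:1}.
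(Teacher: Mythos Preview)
Your proposal is correct and follows essentially the same route as the paper's own proof: decompose $f$ into a local and a distant piece, handle the local piece by H\"older's inequality plus the Muckenhoupt--Wheeden bound and the $A_q$ property of $w^q$, handle the distant piece via the kernel smoothness $\big||x-z|^{-(n-\gamma)}-|x'-z|^{-(n-\gamma)}\big|\le Cr|z-y|^{-(n-\gamma+1)}$ and a dyadic annular decomposition that produces the convergent factor $2^{-j}$, then recognize $1/\alpha-1/p-1/s=-1/q$ and finish with Minkowski in the $L^s(\mu)$-norm. The only cosmetic differences are that the paper splits at $4B$ rather than $2B$ and writes the distant-piece oscillation as an average $\frac{1}{|B|}\int_B\big[I_\gamma f_2(x)-I_\gamma f_2(z)\big]\,dz$ rather than bounding by $\sup_{x,x'\in B}|I_\gamma f_2(x)-I_\gamma f_2(x')|$; neither changes the argument.
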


Throughout this paper, the letter $C$ always denotes a positive constant that is independent of the essential variables but whose value may vary at each occurrence. We also use $A\approx B$ to denote the equivalence of $A$ and $B$; that is, there exist two positive constants $C_1$, $C_2$ independent of quantities $A$ and $B$ such that $C_1 A\leq B\leq C_2 A$. Equivalently, we could define the above notions of this section with cubes in place of balls and we will use whichever is more appropriate, depending on the circumstances.

\section{Proofs of Theorems \ref{mainthm:1} and \ref{mainthm:2}}
In this section, we will prove the conclusions of Theorems \ref{mainthm:1} and \ref{mainthm:2}.
\begin{proof}[Proof of Theorem $\ref{mainthm:1}$]
The proof is inspired by \cite{feuto1,feuto2}. Let $1<p\leq\alpha<s\leq\infty$ and $f\in(L^p,L^s)^{\alpha}(w^p,w^q;\mu)$ with $w\in A_{p,q}$ and $\mu\in\Delta_2$.
For an arbitrary point $y\in\mathbb R^n$, set $B=B(y,r)$ for the ball centered at $y$ and of radius $r$, $2B=B(y,2r)$. We represent $f$ as
\begin{equation*}
f=f\cdot\chi_{2B}+f\cdot\chi_{(2B)^c}:=f_1+f_2;
\end{equation*}
where $\chi_{2B}$ is the characteristic function of $2B$. By the linearity of the fractional integral operator $I_{\gamma}$, one can write
\begin{align}\label{I}
&w^q(B(y,r))^{1/{\beta}-1/q-1/s}\big\|I_\gamma(f)\cdot\chi_{B(y,r)}\big\|_{L^q(w^q)}\notag\\
&=w^q(B(y,r))^{1/{\beta}-1/q-1/s}\bigg(\int_{B(y,r)}\big|I_\gamma(f)(x)\big|^qw^q(x)\,dx\bigg)^{1/q}\notag\\
&\leq w^q(B(y,r))^{1/{\beta}-1/q-1/s}\bigg(\int_{B(y,r)}\big|I_\gamma(f_1)(x)\big|^qw^q(x)\,dx\bigg)^{1/q}\notag\\
&+w^q(B(y,r))^{1/{\beta}-1/q-1/s}\bigg(\int_{B(y,r)}\big|I_\gamma(f_2)(x)\big|^qw^q(x)\,dx\bigg)^{1/q}\notag\\
&:=I_1(y,r)+I_2(y,r).
\end{align}
Here and in what follows, for any positive number $\tau>0$, we use the convention $f^\tau(x):=[f(x)]^{\tau}$. Below we will give the estimates of $I_1(y,r)$ and $I_2(y,r)$, respectively. By the weighted $(L^p,L^q)$-boundedness of $I_{\gamma}$ (see Theorem \ref{strong}), we have
\begin{align}
I_1(y,r)&\leq w^q(B(y,r))^{1/{\beta}-1/q-1/s}\big\|I_\gamma(f_1)\big\|_{L^q(w^q)}\notag\\
&\leq C\cdot w^q(B(y,r))^{1/{\beta}-1/q-1/s}
\bigg(\int_{B(y,2r)}|f(x)|^p w^p(x)\,dx\bigg)^{1/p}\notag.
\end{align}
Observe that $1/{\beta}-1/q-1/s=1/{\alpha}-1/p-1/s$ when $1/{\beta}=1/{\alpha}-{\gamma}/n$. This fact implies that
\begin{align}\label{I1}
I_1(y,r)&\leq C\cdot w^q(B(y,r))^{1/{\alpha}-1/p-1/s}\big\|f\cdot\chi_{B(y,2r)}\big\|_{L^p(w^p)}\notag\\
&=C\cdot w^q(B(y,2r))^{1/{\alpha}-1/p-1/s}\big\|f\cdot\chi_{B(y,2r)}\big\|_{L^p(w^p)}\notag\\
&\times \frac{w^q(B(y,r))^{1/{\alpha}-1/p-1/s}}{w^q(B(y,2r))^{1/{\alpha}-1/p-1/s}}.
\end{align}
Since $w\in A_{p,q}$, we get $w^q\in A_q\subset A_{\infty}$ by Lemma \ref{relation} $(i)$. Moreover, since $1/{\alpha}-1/p-1/s<0$, then by doubling inequality (\ref{weights}), we obtain
\begin{equation}\label{doubling1}
\frac{w^q(B(y,r))^{1/{\alpha}-1/p-1/s}}{w^q(B(y,2r))^{1/{\alpha}-1/p-1/s}}\leq C.
\end{equation}
Substituting the above inequality \eqref{doubling1} into \eqref{I1}, we can see that
\begin{equation}\label{I1yr}
I_1(y,r)\leq C\cdot w^q(B(y,2r))^{1/{\alpha}-1/p-1/s}\big\|f\cdot\chi_{B(y,2r)}\big\|_{L^p(w^p)}.
\end{equation}
Let us now turn to the estimate of $I_2(y,r)$. First, it is clear that when $x\in B(y,r)$ and $z\in B(y,2r)^c$, we get $|x-z|\approx|y-z|$. We then decompose $\mathbb R^n$ into a geometrically increasing sequence of concentric balls, and obtain the following pointwise estimate:
\begin{align}\label{pointwise1}
\big|I_{\gamma}(f_2)(x)\big|
&\leq\frac{1}{\zeta(\gamma)}\int_{\mathbb R^n}\frac{|f_2(z)|}{|x-z|^{n-\gamma}}dz
\leq C\int_{B(y,2r)^c}\frac{|f(z)|}{|y-z|^{n-\gamma}}dz\notag\\
&=C\sum_{j=1}^\infty\int_{B(y,2^{j+1}r)\backslash B(y,2^jr)}\frac{|f(z)|}{|y-z|^{n-\gamma}}dz\notag\\
&\leq C\sum_{j=1}^\infty\frac{1}{|B(y,2^{j+1}r)|^{1-{\gamma}/n}}\int_{B(y,2^{j+1}r)}|f(z)|\,dz.
\end{align}
From this estimate \eqref{pointwise1}, it then follows that
\begin{equation*}
I_2(y,r)\leq C\cdot w^q(B(y,r))^{1/{\beta}-1/s}
\sum_{j=1}^\infty\frac{1}{|B(y,2^{j+1}r)|^{1-{\gamma}/n}}\int_{B(y,2^{j+1}r)}|f(z)|\,dz.
\end{equation*}
By using H\"older's inequality and $A_{p,q}$ condition on $w$, we get
\begin{equation*}
\begin{split}
&\frac{1}{|B(y,2^{j+1}r)|^{1-{\gamma}/n}}\int_{B(y,2^{j+1}r)}|f(z)|\,dz\\
&\leq\frac{1}{|B(y,2^{j+1}r)|^{1-{\gamma}/n}}\bigg(\int_{B(y,2^{j+1}r)}\big|f(z)\big|^pw^p(z)\,dz\bigg)^{1/p}
\bigg(\int_{B(y,2^{j+1}r)}w(z)^{-p'}dz\bigg)^{1/{p'}}\\
&\leq C\bigg(\int_{B(y,2^{j+1}r)}|f(z)|^pw^p(z)\,dz\bigg)^{1/p}\cdot w^q\big(B(y,2^{j+1}r)\big)^{-1/q}.
\end{split}
\end{equation*}
Hence,
\begin{equation}\label{I2yr}
\begin{split}
I_2(y,r)&\leq C\cdot w^q(B(y,r))^{1/{\beta}-1/s}\\
&\times\sum_{j=1}^\infty\bigg(\int_{B(y,2^{j+1}r)}|f(z)|^pw^p(z)\,dz\bigg)^{1/p}
\cdot w^q\big(B(y,2^{j+1}r)\big)^{-1/q}\\
&=C\sum_{j=1}^\infty w^q(B(y,2^{j+1}r))^{1/{\beta}-1/q-1/s}\big\|f\cdot\chi_{B(y,2^{j+1}r)}\big\|_{L^p(w^p)}\\
&\times\frac{w^q(B(y,r))^{1/{\beta}-1/s}}{w^q(B(y,2^{j+1}r))^{1/{\beta}-1/s}}\\
&=C\sum_{j=1}^\infty w^q(B(y,2^{j+1}r))^{1/{\alpha}-1/p-1/s}\big\|f\cdot\chi_{B(y,2^{j+1}r)}\big\|_{L^p(w^p)}\\
&\times\frac{w^q(B(y,r))^{1/{\beta}-1/s}}{w^q(B(y,2^{j+1}r))^{1/{\beta}-1/s}},
\end{split}
\end{equation}
where in the last equality we have used the relation $1/{\beta}-1/q=1/{\alpha}-1/p$. Notice that $w^q\in A_q\subset A_\infty$ for $1<q<\infty$, then by using the inequality (\ref{compare}) with exponent $\delta>0$ and our assumption $\beta<s$, we find that
\begin{align}\label{psi1}
\sum_{j=1}^\infty\frac{w^q(B(y,r))^{1/{\beta}-1/s}}{w^q(B(y,2^{j+1}r))^{1/{\beta}-1/s}}
&\leq C\sum_{j=1}^\infty\left(\frac{|B(y,r)|}{|B(y,2^{j+1}r)|}\right)^{\delta(1/{\beta}-1/s)}\notag\\
&=C\sum_{j=1}^\infty\left(\frac{1}{2^{(j+1)n}}\right)^{\delta(1/{\beta}-1/s)}\notag\\
&\leq C,
\end{align}
where the last series is convergent since $\delta(1/{\beta}-1/s)>0$. Therefore by taking the $L^s(\mu)$-norm of both sides of \eqref{I}(with respect to the variable $y$), and then using Minkowski's inequality, \eqref{I1yr}, \eqref{I2yr} and \eqref{psi1}, we have
\begin{equation*}
\begin{split}
&\Big\|w^q(B(y,r))^{1/{\beta}-1/q-1/s}\big\|I_\gamma(f)\cdot\chi_{B(y,r)}\big\|_{L^q(w^q)}\Big\|_{L^s(\mu)}\\
&\leq\big\|I_1(y,r)\big\|_{L^s(\mu)}+\big\|I_2(y,r)\big\|_{L^s(\mu)}\\
&\leq C\Big\|w^q(B(y,2r))^{1/{\alpha}-1/p-1/s}\big\|f\cdot\chi_{B(y,2r)}\big\|_{L^p(w^p)}\Big\|_{L^s(\mu)}\\
&+C\sum_{j=1}^\infty\Big\|w^q(B(y,2^{j+1}r))^{1/{\alpha}-1/p-1/s}\big\|f\cdot\chi_{B(y,2^{j+1}r)}\big\|_{L^p(w^p)}\Big\|_{L^s(\mu)}
\times\frac{w^q(B(y,r))^{1/{\beta}-1/s}}{w^q(B(y,2^{j+1}r))^{1/{\beta}-1/s}}\\
&\leq C\big\|f\big\|_{(L^p,L^s)^{\alpha}(w^p,w^q;\mu)}+C\big\|f\big\|_{(L^p,L^s)^{\alpha}(w^p,w^q;\mu)}
\times\sum_{j=1}^\infty\frac{w^q(B(y,r))^{1/{\beta}-1/s}}{w^q(B(y,2^{j+1}r))^{1/{\beta}-1/s}}\\
&\leq C\big\|f\big\|_{(L^p,L^s)^{\alpha}(w^p,w^q;\mu)}.
\end{split}
\end{equation*}
Thus, by taking the supremum over all $r>0$, we complete the proof of Theorem \ref{mainthm:1}.
\end{proof}

\begin{proof}[Proof of Theorem $\ref{mainthm:2}$]
Let $p=1$, $1\leq\alpha<s\leq\infty$ and $f\in(L^1,L^s)^{\alpha}(w,w^q;\mu)$ with $w\in A_{1,q}$ and $\mu\in\Delta_2$. For an arbitrary ball $B=B(y,r)$ in $\mathbb R^n$, we represent $f$ as
\begin{equation*}
f=f\cdot\chi_{2B}+f\cdot\chi_{(2B)^c}:=f_1+f_2;
\end{equation*}
then by the linearity of the fractional integral operator $I_{\gamma}$, one can write
\begin{align}\label{Iprime}
&w^q(B(y,r))^{1/{\beta}-1/q-1/s}\big\|I_{\gamma}(f)\cdot\chi_{B(y,r)}\big\|_{WL^q(w^q)}\notag\\
&\leq 2\cdot w^q(B(y,r))^{1/{\beta}-1/q-1/s}\big\|I_{\gamma}(f_1)\cdot\chi_{B(y,r)}\big\|_{WL^q(w^q)}\notag\\
&+2\cdot w^q(B(y,r))^{1/{\beta}-1/q-1/s}\big\|I_{\gamma}(f_2)\cdot\chi_{B(y,r)}\big\|_{WL^q(w^q)}\notag\\
&:=I'_1(y,r)+I'_2(y,r).
\end{align}
We first consider the term $I'_1(y,r)$. By the weighted weak $(1,q)$-boundedness of $I_{\gamma}$ (see Theorem \ref{weak}), we have
\begin{align}
I'_1(y,r)&\leq 2\cdot w^q(B(y,r))^{1/{\beta}-1/q-1/s}\big\|I_{\gamma}(f_1)\big\|_{WL^q(w^q)}\notag\\
&\leq C\cdot w^q(B(y,r))^{1/{\beta}-1/q-1/s}
\bigg(\int_{B(y,2r)}|f(x)| w(x)\,dx\bigg)\notag.
\end{align}
Observe that $1/{\beta}-1/q-1/s=1/{\alpha}-1-1/s$ when $1/{\beta}=1/{\alpha}-{\gamma}/n$ and $q=n/{(n-\gamma)}$. Then we have
\begin{align}\label{I1prime}
I'_1(y,r)&\leq C\cdot w^q(B(y,r))^{1/{\alpha}-1-1/s}
\bigg(\int_{B(y,2r)}|f(x)| w(x)\,dx\bigg)\notag\\
&=C\cdot w^q(B(y,2r))^{1/{\alpha}-1-1/s}\big\|f\cdot\chi_{B(y,2r)}\big\|_{L^1(w)}\notag\\
&\times \frac{w^q(B(y,r))^{1/{\alpha}-1-1/s}}{w^q(B(y,2r))^{1/{\alpha}-1-1/s}}.
\end{align}
Since $w$ is in the class $A_{1,q}$, we get $w^q\in A_1\subset A_{\infty}$ by Lemma \ref{relation} $(ii)$. Moreover, since $1/{\alpha}-1-1/s<0$, then we apply inequality (\ref{weights}) to obtain that
\begin{equation}\label{doubling2}
\frac{w^q(B(y,r))^{1/{\alpha}-1-1/s}}{w^q(B(y,2r))^{1/{\alpha}-1-1/s}}\leq C.
\end{equation}
Substituting the above inequality \eqref{doubling2} into \eqref{I1prime}, we thus obtain
\begin{equation}\label{WI1yr}
I'_1(y,r)\leq C\cdot w^q(B(y,2r))^{1/{\alpha}-1-1/s}\big\|f\cdot\chi_{B(y,2r)}\big\|_{L^1(w)}.
\end{equation}
As for the second term $I'_2(y,r)$, it follows directly from Chebyshev's inequality and the pointwise estimate \eqref{pointwise1} that
\begin{equation*}
\begin{split}
I'_2(y,r)&\leq2\cdot w^q(B(y,r))^{1/{\beta}-1/q-1/s}
\bigg(\int_{B(y,r)}\big|I_\gamma(f_2)(x)\big|^qw^q(x)\,dx\bigg)^{1/q}\\
&\leq C\cdot w^q(B(y,r))^{1/{\beta}-1/s}
\sum_{j=1}^\infty\frac{1}{|B(y,2^{j+1}r)|^{1-{\gamma}/n}}\int_{B(y,2^{j+1}r)}|f(z)|\,dz.
\end{split}
\end{equation*}
Moreover, by applying H\"older's inequality and then the reverse H\"older's inequality in succession, we can show that $w^q\in A_1$ if and only if $w\in A_1\cap RH_q$ (see \cite{johnson}), where $RH_q$ denotes the reverse H\"older class (see \cite{duoand} for further details). Another application of $A_1$ condition on $w$ gives that
\begin{equation*}
\begin{split}
&\frac{1}{|B(y,2^{j+1}r)|^{1-{\gamma}/n}}\int_{B(y,2^{j+1}r)}|f(z)|\,dz\\
&\leq C\cdot\frac{|B(y,2^{j+1}r)|^{\gamma/n}}{w(B(y,2^{j+1}r))}\cdot
\underset{z\in B(y,2^{j+1}r)}{\mbox{ess\,inf}}\;w(z)\int_{B(y,2^{j+1}r)}|f(z)|\,dz\\
&\leq C\cdot\frac{|B(y,2^{j+1}r)|^{\gamma/n}}{w(B(y,2^{j+1}r))}
\bigg(\int_{B(y,2^{j+1}r)}|f(z)|w(z)\,dz\bigg).
\end{split}
\end{equation*}
In addition, note that $w\in RH_q$. We are able to verify that for any positive integer $j\in\mathbb Z^+$,
\begin{equation*}
w^q\big(B(y,2^{j+1}r)\big)^{1/q}=\bigg(\int_{B(y,2^{j+1}r)}w^q(x)\,dx\bigg)^{1/q}\leq C\big|B(y,2^{j+1}r)\big|^{1/q-1}\cdot w\big(B(y,2^{j+1}r)\big),
\end{equation*}
which is equivalent to
\begin{equation}\label{wq}
\frac{|B(y,2^{j+1}r)|^{\gamma/n}}{w(B(y,2^{j+1}r))}\leq C\cdot\frac{1}{w^q(B(y,2^{j+1}r))^{1/q}}.
\end{equation}
Consequently,
\begin{equation}\label{WI2yr}
\begin{split}
I'_2(y,r)&\leq C\cdot w^q(B(y,r))^{1/{\beta}-1/s}\\
&\times\sum_{j=1}^\infty\bigg(\int_{B(y,2^{j+1}r)}|f(z)|w(z)\,dz\bigg)\cdot w^q\big(B(y,2^{j+1}r)\big)^{-1/q}\\
&=C\sum_{j=1}^\infty w^q\big(B(y,2^{j+1}r)\big)^{1/{\beta}-1/q-1/s}\big\|f\cdot\chi_{B(y,2^{j+1}r)}\big\|_{L^1(w)}\\
&\times\frac{w^q(B(y,r))^{1/{\beta}-1/s}}{w^q(B(y,2^{j+1}r))^{1/{\beta}-1/s}}\\
&=C\sum_{j=1}^\infty w^q\big(B(y,2^{j+1}r)\big)^{1/{\alpha}-1-1/s}\big\|f\cdot\chi_{B(y,2^{j+1}r)}\big\|_{L^1(w)}\\
&\times\frac{w^q(B(y,r))^{1/{\beta}-1/s}}{w^q(B(y,2^{j+1}r))^{1/{\beta}-1/s}},
\end{split}
\end{equation}
where in the last equality we have used the relation $1/{\beta}-1/q=1/{\alpha}-1$. Recall that $w^q\in A_1\subset A_\infty$, then by using the inequality (\ref{compare}) with exponent $\delta^\ast>0$ and the assumption $\beta<s$, we find that
\begin{align}\label{psi2}
\sum_{j=1}^\infty\frac{w^q(B(y,r))^{1/{\beta}-1/s}}{w^q(B(y,2^{j+1}r))^{1/{\beta}-1/s}}
&\leq C\sum_{j=1}^\infty\left(\frac{|B(y,r)|}{|B(y,2^{j+1}r)|}\right)^{\delta^\ast(1/{\beta}-1/s)}\notag\\
&= C\sum_{j=1}^\infty\left(\frac{1}{2^{(j+1)n}}\right)^{\delta^\ast(1/{\beta}-1/s)}\leq C,
\end{align}
where the last series is convergent since $\delta^\ast(1/{\beta}-1/s)>0$. Therefore by taking the $L^s(\mu)$-norm of both sides of \eqref{Iprime}(with respect to the variable $y$), and then using Minkowski's inequality, \eqref{WI1yr} and \eqref{WI2yr}, we have
\begin{equation*}
\begin{split}
&\Big\|w^q(B(y,r))^{1/{\beta}-1/q-1/s}\big\|I_{\gamma}(f)\cdot\chi_{B(y,r)}\big\|_{WL^q(w^q)}\Big\|_{L^s(\mu)}\\
&\leq\big\|I'_1(y,r)\big\|_{L^s(\mu)}+\big\|I'_2(y,r)\big\|_{L^s(\mu)}\\
&\leq C\Big\|w^q(B(y,2r))^{1/{\alpha}-1-1/s}\big\|f\cdot\chi_{B(y,2r)}\big\|_{L^1(w)}\Big\|_{L^s(\mu)}\\
&+C\sum_{j=1}^\infty\Big\|w^q(B(y,2^{j+1}r))^{1/{\alpha}-1-1/s}\big\|f\cdot\chi_{B(y,2^{j+1}r)}\big\|_{L^1(w)}\Big\|_{L^s(\mu)}
\times\frac{w^q(B(y,r))^{1/{\beta}-1/s}}{w^q(B(y,2^{j+1}r))^{1/{\beta}-1/s}}\\
&\leq C\big\|f\big\|_{(L^1,L^s)^{\alpha}(w,w^q;\mu)}+C\big\|f\big\|_{(L^1,L^s)^{\alpha}(w,w^q;\mu)}
\times\sum_{j=1}^\infty\frac{w^q(B(y,r))^{1/{\beta}-1/s}}{w^q(B(y,2^{j+1}r))^{1/{\beta}-1/s}}\\
&\leq C\big\|f\big\|_{(L^1,L^s)^{\alpha}(w,w^q;\mu)},
\end{split}
\end{equation*}
where the last inequality follows from \eqref{psi2}. Thus, by taking the supremum over all $r>0$, we finish the proof of Theorem \ref{mainthm:2}.
\end{proof}

Given $0<\gamma<n$, the related fractional maximal operator $M_{\gamma}$ with order $\gamma$ is given by
\begin{equation*}
M_{\gamma}f(x):=\sup_{B\ni x}\frac{1}{|B|^{1-\gamma/n}}\int_B|f(y)|\,dy,
\end{equation*}
where the supremum is taken over all balls $B$ containing $x$. Let us point out that $M_{\gamma}f(x)$ can be controlled pointwise by $I_\gamma(|f|)(x)$ for any $f(x)$. In fact, fix $r>0$, then we have
\begin{equation*}
\begin{split}
I_\gamma(|f|)(x)&\ge\int_{|y-x|<r}\frac{|f(y)|}{|x-y|^{n-\gamma}}\,dy\\
&\geq\frac{1}{r^{n-\gamma}}\int_{|y-x|<r}|f(y)|\,dy.
\end{split}
\end{equation*}
Taking the supremum for $r>0$ on both sides of the above inequality, we get
\begin{equation}\label{dominate2}
I_\gamma(|f|)(x)\geq\sup_{r>0}\frac{1}{r^{n-\gamma}}\int_{|y-x|<r}|f(y)|\,dy=C\cdot M_{\gamma}f(x).
\end{equation}
This is just our desired conclusion. An immediate application of the above inequality \eqref{dominate2} is the following weighted strong-type and weak-type estimates for the operator $M_{\gamma}$.
\begin{corollary}
Let $0<\gamma<n$, $1<p<n/{\gamma}$, $1/q=1/p-{\gamma}/n$ and $w\in A_{p,q}$. Assume that $p\leq\alpha<\beta<s\leq\infty$ and $\mu\in\Delta_2$, then the fractional maximal operator $M_{\gamma}$ is bounded from $(L^p,L^s)^{\alpha}(w^p,w^q;\mu)$ into $(L^q,L^s)^{\beta}(w^q;\mu)$ with $1/{\beta}=1/{\alpha}-{\gamma}/n$.
\end{corollary}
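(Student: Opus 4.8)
The plan is to derive the corollary directly from Theorem \ref{mainthm:1} by invoking the pointwise domination established in \eqref{dominate2}. The key observation is that for any locally integrable $f$, we have $M_{\gamma}f(x)\leq C\cdot I_{\gamma}(|f|)(x)$ pointwise, and hence for any ball $B(y,r)$,
\begin{equation*}
\big\|M_{\gamma}(f)\cdot\chi_{B(y,r)}\big\|_{L^q(w^q)}\leq C\cdot\big\|I_{\gamma}(|f|)\cdot\chi_{B(y,r)}\big\|_{L^q(w^q)}.
\end{equation*}

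First I would fix $w\in A_{p,q}$, $\mu\in\Delta_2$, and the exponents $p\leq\alpha<\beta<s\leq\infty$ with $1/\beta=1/\alpha-\gamma/n$, and take $f\in(L^p,L^s)^{\alpha}(w^p,w^q;\mu)$. Multiplying the displayed pointwise inequality above by $w^q(B(y,r))^{1/\beta-1/q-1/s}$ and taking the $L^s(\mu)$-norm in the variable $y$, the monotonicity of all the norms involved gives
\begin{equation*}
\big\|M_{\gamma}(f)\big\|_{(L^q,L^s)^{\beta}(w^q;\mu)}\leq C\cdot\big\|I_{\gamma}(|f|)\big\|_{(L^q,L^s)^{\beta}(w^q;\mu)}.
\end{equation*}
Since $\big\||f|\big\|_{(L^p,L^s)^{\alpha}(w^p,w^q;\mu)}=\big\|f\big\|_{(L^p,L^s)^{\alpha}(w^p,w^q;\mu)}$, applying Theorem \ref{mainthm:1} to $|f|$ in place of $f$ yields
\begin{equation*}
\big\|I_{\gamma}(|f|)\big\|_{(L^q,L^s)^{\beta}(w^q;\mu)}\leq C\cdot\big\|f\big\|_{(L^p,L^s)^{\alpha}(w^p,w^q;\mu)},
\end{equation*}
and combining the last two displays completes the proof.

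There is essentially no obstacle here; the only point requiring a word of care is to note that $I_{\gamma}$ and $M_{\gamma}$ should be applied to $|f|$ (both operators are, up to the absolute value inside, positivity-preserving in the relevant sense), so that the pointwise bound \eqref{dominate2} applies verbatim and the amalgam norm is unchanged upon replacing $f$ by $|f|$. Everything else is a direct transfer of the conclusion of Theorem \ref{mainthm:1}. For completeness one could also record the weak-type counterpart, obtained in the same fashion from Theorem \ref{mainthm:2}: under the hypotheses $p=1$, $q=n/(n-\gamma)$, $w\in A_{1,q}$, $1\leq\alpha<\beta<s\leq\infty$ and $\mu\in\Delta_2$, the operator $M_{\gamma}$ is bounded from $(L^1,L^s)^{\alpha}(w,w^q;\mu)$ into $(WL^q,L^s)^{\beta}(w^q;\mu)$ with $1/\beta=1/\alpha-\gamma/n$, since the weak norm is also monotone under the pointwise domination.
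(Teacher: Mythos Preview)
Your proposal is correct and follows exactly the paper's approach: the corollary is stated there as an immediate application of the pointwise domination \eqref{dominate2} together with Theorem~\ref{mainthm:1}, with no additional argument given. Your remark on the weak-type counterpart likewise matches the paper's Corollary~3.2, obtained in the same way from Theorem~\ref{mainthm:2}.
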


\begin{corollary}
Let $0<\gamma<n$, $p=1$, $q=n/{(n-\gamma)}$ and $w\in A_{1,q}$. Assume that $1\leq\alpha<\beta<s\leq\infty$ and $\mu\in\Delta_2$, then the fractional maximal operator $M_{\gamma}$ is bounded from $(L^1,L^s)^{\alpha}(w,w^q;\mu)$ into $(WL^q,L^s)^{\beta}(w^q;\mu)$ with $1/{\beta}=1/{\alpha}-{\gamma}/n$.
\end{corollary}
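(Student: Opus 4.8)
The plan is to obtain this corollary as an immediate consequence of Theorem \ref{mainthm:2} and the pointwise domination \eqref{dominate2}, with no new estimates required. Recall that \eqref{dominate2} gives $M_\gamma f(x)\le C\cdot I_\gamma(|f|)(x)$ for all $x\in\mathbb R^n$, where $C$ depends only on $n$ and $\gamma$. Since the weak Lebesgue quasi-norm $\|\cdot\|_{WL^q(w^q)}$ is monotone under pointwise domination, for every ball $B(y,r)$ in $\mathbb R^n$ one has
$$\big\|M_\gamma f\cdot\chi_{B(y,r)}\big\|_{WL^q(w^q)}\le C\big\|I_\gamma(|f|)\cdot\chi_{B(y,r)}\big\|_{WL^q(w^q)}.$$
To see this, note that $\{x\in B(y,r):|M_\gamma f(x)|>\lambda\}\subseteq\{x\in B(y,r):I_\gamma(|f|)(x)>\lambda/C\}$ for each $\lambda>0$; multiply the corresponding weighted distribution functions by $\lambda$, take $1/q$-th powers and the supremum over $\lambda>0$, and absorb the constant via the substitution $\lambda\mapsto C\lambda$.

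Next I would multiply both sides of this inequality by $w^q(B(y,r))^{1/\beta-1/q-1/s}$, take the $L^s(\mu)$-norm in the variable $y$, and then the supremum over $r>0$. Comparing with the definition of the weighted weak amalgam norm in Definition \ref{amalgam}, this yields
$$\big\|M_\gamma f\big\|_{(WL^q,L^s)^{\beta}(w^q;\mu)}\le C\big\|I_\gamma(|f|)\big\|_{(WL^q,L^s)^{\beta}(w^q;\mu)}.$$
Since $w\in A_{1,q}$, $1\le\alpha<\beta<s\le\infty$, $\mu\in\Delta_2$ and $1/\beta=1/\alpha-\gamma/n$, Theorem \ref{mainthm:2} applies to the nonnegative function $|f|$ in place of $f$ and gives $\big\|I_\gamma(|f|)\big\|_{(WL^q,L^s)^{\beta}(w^q;\mu)}\le C\big\||f|\big\|_{(L^1,L^s)^{\alpha}(w,w^q;\mu)}$. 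Finally, $\big\||f|\big\|_{(L^1,L^s)^{\alpha}(w,w^q;\mu)}=\big\|f\big\|_{(L^1,L^s)^{\alpha}(w,w^q;\mu)}$, because the amalgam norm depends on $f$ only through the quantities $\|f\cdot\chi_{B(y,r)}\|_{L^1(w)}$, which are unchanged when $f$ is replaced by $|f|$. Chaining the three displayed estimates proves that $M_\gamma$ is bounded from $(L^1,L^s)^{\alpha}(w,w^q;\mu)$ into $(WL^q,L^s)^{\beta}(w^q;\mu)$.

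There is essentially no analytic obstacle here: the substantive work is entirely contained in Theorem \ref{mainthm:2}, and \eqref{dominate2} merely reduces the maximal operator to the fractional integral. The only points demanding (routine) care are the bookkeeping of the constant from \eqref{dominate2} inside the weak-$L^q(w^q)$ quasi-norm and the harmless replacement of $f$ by $|f|$; both are immediate. The same argument, using Theorem \ref{mainthm:1} and the $L^q(w^q)$-norm in place of $WL^q(w^q)$, establishes the preceding strong-type corollary for $M_\gamma$ as well.
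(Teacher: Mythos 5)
Your proposal is correct and matches the paper's own route: the paper derives this corollary as an immediate consequence of the pointwise domination \eqref{dominate2} together with Theorem \ref{mainthm:2}, exactly as you do, and you have merely spelled out the routine bookkeeping (monotonicity of the $WL^q(w^q)$ quasi-norm, invariance of the amalgam norm under $f\mapsto|f|$) that the paper leaves implicit. No further comment is needed.
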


Suppose that $\mathcal L$ is a linear operator which generates an analytic semigroup $\big\{e^{-t\mathcal L}\big\}_{t>0}$ on $L^2(\mathbb R^n)$ with a kernel $p_t(x,y)$ satisfying Gaussian upper bound; that is, there exist two positive constants $C$ and $A$ such that for all $x,y\in\mathbb R^n$ and all $t>0$, we have
\begin{equation}\label{G}
\big|p_t(x,y)\big|\le\frac{C}{t^{n/2}}e^{-A\frac{|x-y|^2}{t}}.
\end{equation}
For any $0<\gamma<n$, the generalized fractional integrals $\mathcal L^{-\gamma/2}$ associated to the operator $\mathcal L$ is defined by
\begin{equation}\label{gefrac}
\mathcal L^{-\gamma/2}f(x):=\frac{1}{\Gamma(\gamma/2)}\int_0^\infty e^{-t\mathcal L}(f)(x)t^{\gamma/2-1}\,dt.
\end{equation}
Note that if $\mathcal L=-\Delta$ is the Laplacian on $\mathbb R^n$, then $\mathcal L^{-\gamma/2}$ is the classical fractional integral operator $I_\gamma$, which is given by \eqref{frac}. Since the semigroup $e^{-t\mathcal L}$ has a kernel $p_t(x,y)$ which satisfies the Gaussian upper bound \eqref{G}, it is easy to check that for all $x\in\mathbb R^n$,
\begin{equation}\label{dominate1}
\big|\mathcal L^{-\gamma/2}(f)(x)\big|\le C\cdot I_\gamma(|f|)(x).
\end{equation}
In fact, if we denote the kernel of $\mathcal L^{-\gamma/2}$ by $\mathcal K_\gamma(x,y)$, then it follows immediately from \eqref{gefrac} that (see \cite{duong1,mo})
\begin{equation}\label{kgamma}
\mathcal K_\gamma(x,y)=\frac{1}{\Gamma(\gamma/2)}\int_0^\infty p_t(x,y)t^{\gamma/2-1}\,dt,
\end{equation}
where $p_t(x,y)$ is the kernel of $e^{-t\mathcal L}$. Thus, by using the Gaussian upper bound (\ref{G}) and the expression (\ref{kgamma}), we can deduce that (see \cite{duong1} and \cite{mo})
\begin{align}\label{kernelk}
\big|\mathcal K_\gamma(x,y)\big|
&\leq\frac{1}{\Gamma(\gamma/2)}\int_0^\infty\big|p_t(x,y)\big|t^{\gamma/2-1}\,dt\notag\\
&\leq C\int_0^\infty e^{-A\frac{|x-y|^2}{t}}t^{\gamma/2-n/2-1}\,dt\notag\\
&\leq C\cdot\frac{1}{|x-y|^{n-\gamma}},
\end{align}
which implies \eqref{dominate1}. Taking into account this pointwise inequality, as a consequence of Theorems \ref{mainthm:1} and \ref{mainthm:2}, we have the following results.
\begin{corollary}
Let $0<\gamma<n$, $1<p<n/{\gamma}$, $1/q=1/p-{\gamma}/n$ and $w\in A_{p,q}$. Assume that $p\leq\alpha<\beta<s\leq\infty$ and $\mu\in\Delta_2$, then the generalized fractional integrals $\mathcal L^{-\gamma/2}$ is bounded from $(L^p,L^s)^{\alpha}(w^p,w^q;\mu)$ into $(L^q,L^s)^{\beta}(w^q;\mu)$ with $1/{\beta}=1/{\alpha}-{\gamma}/n$.
\end{corollary}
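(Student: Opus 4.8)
The plan is to deduce this corollary directly from Theorem \ref{mainthm:1} together with the pointwise domination \eqref{dominate1}, which has already been established above from the Gaussian upper bound \eqref{G}. Recall that \eqref{G}, via the expression \eqref{kgamma} for the kernel $\mathcal K_\gamma(x,y)$ of $\mathcal L^{-\gamma/2}$ and the estimate \eqref{kernelk}, yields $\bigl|\mathcal L^{-\gamma/2}(f)(x)\bigr|\le C\cdot I_\gamma(|f|)(x)$ for every $x\in\mathbb R^n$, with $C$ independent of $f$ and $x$. This reduces the desired inequality to a norm bound for $I_\gamma$ applied to the nonnegative function $|f|$.

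Next I would record the elementary fact that the weighted amalgam norm $\|\cdot\|_{(L^q,L^s)^{\beta}(w^q;\mu)}$ is solid (a lattice norm): if $|g_1(x)|\le|g_2(x)|$ for a.e.\ $x$, then for each fixed ball $B(y,r)$ one has $\|g_1\cdot\chi_{B(y,r)}\|_{L^q(w^q)}\le\|g_2\cdot\chi_{B(y,r)}\|_{L^q(w^q)}$, and the remaining operations in the definition of the norm — multiplication by the nonnegative factor $w^q(B(y,r))^{1/{\beta}-1/q-1/s}$, taking the $L^s(\mu)$-norm in the variable $y$, and the supremum over $r>0$ — all preserve this order. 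For the same reason $\bigl\||f|\bigr\|_{(L^p,L^s)^{\alpha}(w^p,w^q;\mu)}=\|f\|_{(L^p,L^s)^{\alpha}(w^p,w^q;\mu)}$, since that norm depends on $f$ only through $|f|$; in particular $|f|$ lies in $(L^p,L^s)^{\alpha}(w^p,w^q;\mu)$ whenever $f$ does.

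Combining these observations, under the hypotheses $w\in A_{p,q}$, $p\le\alpha<\beta<s\le\infty$, $\mu\in\Delta_2$ and $1/{\beta}=1/{\alpha}-{\gamma}/n$, I would conclude
\begin{align*}
\bigl\|\mathcal L^{-\gamma/2}f\bigr\|_{(L^q,L^s)^{\beta}(w^q;\mu)}
&\le C\,\bigl\|I_\gamma(|f|)\bigr\|_{(L^q,L^s)^{\beta}(w^q;\mu)}\\
&\le C\,\bigl\||f|\bigr\|_{(L^p,L^s)^{\alpha}(w^p,w^q;\mu)}
= C\,\|f\|_{(L^p,L^s)^{\alpha}(w^p,w^q;\mu)},
\end{align*}
where the first inequality is solidity combined with \eqref{dominate1}, and the middle inequality is precisely Theorem \ref{mainthm:1} applied to the nonnegative function $|f|$. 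Taking the supremum over $r>0$ is already built into the norms, so this finishes the proof.

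There is essentially no genuine obstacle here: all the analytic work is contained in Theorem \ref{mainthm:1} and in the kernel estimate \eqref{kernelk}, both already available. The only point meriting a line of justification is the solidity of the amalgam norm, which is immediate from the definition. The sole care needed is bookkeeping — checking that the hypotheses on $w$, $\alpha$, $\beta$, $s$, $\mu$ and the scaling relation $1/{\beta}=1/{\alpha}-{\gamma}/n$ match verbatim those of Theorem \ref{mainthm:1}, which they do.
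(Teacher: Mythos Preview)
Your proposal is correct and matches the paper's approach: the paper simply remarks that the corollary follows from the pointwise domination \eqref{dominate1} together with Theorem \ref{mainthm:1}, and you have spelled this out with the additional (harmless and immediate) observation that the amalgam norm is solid.
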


\begin{corollary}
Let $0<\gamma<n$, $p=1$, $q=n/{(n-\gamma)}$ and $w\in A_{1,q}$. Assume that $1\leq\alpha<\beta<s\leq\infty$ and $\mu\in\Delta_2$, then the generalized fractional integrals $\mathcal L^{-\gamma/2}$ is bounded from $(L^1,L^s)^{\alpha}(w,w^q;\mu)$ into $(WL^q,L^s)^{\beta}(w^q;\mu)$ with $1/{\beta}=1/{\alpha}-{\gamma}/n$.
\end{corollary}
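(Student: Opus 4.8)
The proof is an immediate consequence of the pointwise domination \eqref{dominate1} together with Theorem \ref{mainthm:2}, so I would present it as a short deduction rather than a fresh argument. The plan is as follows.

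First I would fix $f\in(L^1,L^s)^{\alpha}(w,w^q;\mu)$, an arbitrary ball $B(y,r)$ in $\mathbb R^n$ and an arbitrary $\lambda>0$. Since \eqref{dominate1} gives $\big|\mathcal L^{-\gamma/2}(f)(x)\big|\le C\cdot I_\gamma(|f|)(x)$ for every $x\in\mathbb R^n$, we obtain the inclusion of level sets
\begin{equation*}
\big\{x\in B(y,r):\big|\mathcal L^{-\gamma/2}(f)(x)\big|>\lambda\big\}
\subseteq\big\{x\in B(y,r):I_\gamma(|f|)(x)>\lambda/C\big\}.
\end{equation*}
By the monotonicity of the weighted measure $w^q$ and the definition of the $WL^q(w^q)$ quasi-norm, this yields
\begin{equation*}
\big\|\mathcal L^{-\gamma/2}(f)\cdot\chi_{B(y,r)}\big\|_{WL^q(w^q)}
\le C\big\|I_\gamma(|f|)\cdot\chi_{B(y,r)}\big\|_{WL^q(w^q)},
\end{equation*}
with $C$ independent of $y$, $r$ and $f$; here one only needs that the weak-type quasi-norm is positively homogeneous, so the absolute constant from \eqref{dominate1} factors out cleanly through the rescaling of the threshold.

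Next I would multiply both sides by the normalizing factor $w^q(B(y,r))^{1/{\beta}-1/q-1/s}$, take the $L^s(\mu)$-norm with respect to the variable $y$, and then the supremum over all $r>0$. By Definition \ref{amalgam} of $(WL^q,L^s)^{\beta}(w^q;\mu)$ this gives
\begin{equation*}
\big\|\mathcal L^{-\gamma/2}(f)\big\|_{(WL^q,L^s)^{\beta}(w^q;\mu)}
\le C\big\|I_\gamma(|f|)\big\|_{(WL^q,L^s)^{\beta}(w^q;\mu)}.
\end{equation*}
Finally, applying Theorem \ref{mainthm:2} to the nonnegative function $|f|$ (note that the norm in Definition \ref{amalgam} involves $f$ only through $|f|$, so $\big\||f|\big\|_{(L^1,L^s)^{\alpha}(w,w^q;\mu)}=\big\|f\big\|_{(L^1,L^s)^{\alpha}(w,w^q;\mu)}$) bounds the right-hand side by $C\big\|f\big\|_{(L^1,L^s)^{\alpha}(w,w^q;\mu)}$, which is exactly the asserted weak-type inequality, under the relation $1/{\beta}=1/{\alpha}-{\gamma}/n$. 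Since every step is a direct invocation of a result already established in the excerpt, there is no genuine obstacle; the only point deserving a line of comment is the homogeneity remark above, which makes the passage through the $WL^q$ quasi-norm legitimate.
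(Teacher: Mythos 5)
Your deduction is correct and is exactly how the paper obtains this corollary: the pointwise domination \eqref{dominate1} coming from the Gaussian bound \eqref{G} and the kernel estimate \eqref{kernelk}, combined with the weak-type result of Theorem \ref{mainthm:2}. The paper simply states the corollary as an immediate consequence, whereas you spell out the level-set inclusion, the homogeneity of the $WL^q(w^q)$ quasi-norm, and the fact that the amalgam norm depends only on $|f|$; these details are accurate and fill in precisely what the paper leaves implicit.
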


\section{Proofs of Theorems \ref{mainthm:3} and \ref{mainthm:4}}

To prove our main theorems in this section, we need the following lemma about $BMO(\mathbb R^n)$ functions.

\begin{lemma}\label{BMO}
Let $b$ be a function in $BMO(\mathbb R^n)$.

$(i)$ For any ball $B$ in $\mathbb R^n$ and for any positive integer $j\in\mathbb Z^+$, then
\begin{equation*}
\big|b_{2^{j+1}B}-b_B\big|\leq C\cdot(j+1)\|b\|_*.
\end{equation*}

$(ii)$ Let $1<q<\infty$. For any ball $B$ in $\mathbb R^n$ and for any weight $\nu\in A_{\infty}$, then
\begin{equation*}
\bigg(\int_B\big|b(x)-b_B\big|^q\nu(x)\,dx\bigg)^{1/q}\leq C\|b\|_*\cdot\nu(B)^{1/q}.
\end{equation*}
\end{lemma}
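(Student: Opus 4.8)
The plan is to prove Lemma~\ref{BMO} in two parts, both of which are classical facts about $BMO$ functions; the only novelty here is the appearance of the weight $\nu\in A_\infty$ in part $(ii)$.

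\textbf{Part $(i)$.} First I would use a telescoping argument. Writing $2^{j+1}B = \underbrace{2\cdot 2 \cdots 2}_{j+1}\, B$, I estimate
\begin{equation*}
\big|b_{2^{j+1}B}-b_B\big| \le \sum_{i=0}^{j}\big|b_{2^{i+1}B}-b_{2^{i}B}\big|.
\end{equation*}
For each consecutive pair, since $2^{i}B \subset 2^{i+1}B$ and $|2^{i+1}B| = 2^n |2^{i}B|$, I have
\begin{equation*}
\big|b_{2^{i+1}B}-b_{2^{i}B}\big| \le \frac{1}{|2^{i}B|}\int_{2^{i}B}\big|b(x)-b_{2^{i+1}B}\big|\,dx
\le \frac{2^n}{|2^{i+1}B|}\int_{2^{i+1}B}\big|b(x)-b_{2^{i+1}B}\big|\,dx \le 2^n\|b\|_*.
\end{equation*}
Summing over $i$ from $0$ to $j$ gives the bound $C(j+1)\|b\|_*$ with $C=2^n$. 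This part is routine and poses no obstacle.

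\textbf{Part $(ii)$.} This is the substantive half. The idea is to combine the John--Nirenberg inequality with the $A_\infty$ property \eqref{compare}. By John--Nirenberg, there are constants $c_1,c_2>0$ (depending only on $n$) such that for every ball $B$ and every $t>0$,
\begin{equation*}
\Big|\big\{x\in B: |b(x)-b_B|>t\big\}\Big| \le c_1 e^{-c_2 t/\|b\|_*}\,|B|.
\end{equation*}
Since $\nu\in A_\infty$, the inequality \eqref{compare} holds with some exponent $\delta>0$; applying it to the measurable subset $E_t=\{x\in B:|b(x)-b_B|>t\}$ of $B$ yields
\begin{equation*}
\nu(E_t) \le C\Big(\frac{|E_t|}{|B|}\Big)^{\delta}\nu(B) \le C\,c_1^{\delta}\,e^{-c_2\delta t/\|b\|_*}\,\nu(B).
\end{equation*}
Then I would write the $L^q(\nu)$-integral in terms of the distribution function with respect to $\nu$:
\begin{equation*}
\int_B\big|b(x)-b_B\big|^q\nu(x)\,dx = q\int_0^\infty t^{q-1}\,\nu(E_t)\,dt
\le C\,\nu(B)\int_0^\infty q\,t^{q-1} e^{-c_2\delta t/\|b\|_*}\,dt.
\end{equation*}
The last integral equals $C(q,n)\,\|b\|_*^{q}$ after the substitution $u=c_2\delta t/\|b\|_*$ (it is a Gamma integral). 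Taking $q$-th roots gives
\begin{equation*}
\bigg(\int_B\big|b(x)-b_B\big|^q\nu(x)\,dx\bigg)^{1/q}\le C\|b\|_*\cdot\nu(B)^{1/q},
\end{equation*}
as desired.

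The only point requiring care is the dependence of constants: the exponent $\delta$ in \eqref{compare} depends on $\nu$ (through its $A_\infty$ constant), so the final constant $C$ depends on $n$, $q$, and the $A_\infty$ constant of $\nu$, but not on $B$ or $b$ — which is all that is needed in the applications. I do not anticipate a genuine obstacle here; the argument is a standard marriage of John--Nirenberg with the $A_\infty$ comparison, and I would simply make sure to invoke \eqref{compare} only after checking that $\nu\in A_\infty$, which is exactly its hypothesis.
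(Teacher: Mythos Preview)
Your proof is correct in both parts. Part $(i)$ is the standard telescoping argument, and part $(ii)$ is the classical route: John--Nirenberg gives exponential decay of the level sets in Lebesgue measure, the $A_\infty$ comparison \eqref{compare} transfers this to exponential decay in $\nu$-measure, and the layer-cake formula converts that into the desired $L^q(\nu)$ bound.

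The paper itself does not give a proof at all; it simply cites \cite{stein2} for $(i)$ and \cite{wang} for $(ii)$. Your argument is precisely the standard one that underlies those references, so there is no discrepancy in approach to discuss---you have supplied what the paper omits. Your remark about constant dependence (on $n$, $q$, and the $A_\infty$ constant of $\nu$) is accurate and matches how the lemma is used downstream.
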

\begin{proof}
For the proof of $(i)$, we refer the reader to \cite{stein2}. For the proof of $(ii)$, we refer the reader to \cite{wang}.
\end{proof}

\begin{proof}[Proof of Theorem $\ref{mainthm:3}$]
Let $1<p\leq\alpha<s\leq\infty$ and $f\in(L^p,L^s)^{\alpha}(w^p,w^q;\mu)$ with $w\in A_{p,q}$ and $\mu\in\Delta_2$.
For each fixed ball $B=B(y,r)$ in $\mathbb R^n$, as before, we represent $f$ as $f=f_1+f_2$, where $f_1=f\cdot\chi_{2B}$, $2B=B(y,2r)\subset\mathbb R^n$. By the linearity of the commutator operator $[b,I_{\gamma}]$, we write
\begin{align}\label{J}
&w^q(B(y,r))^{1/{\beta}-1/q-1/s}\big\|[b,I_\gamma](f)\cdot\chi_{B(y,r)}\big\|_{L^q(w^q)}\notag\\
&=w^q(B(y,r))^{1/{\beta}-1/q-1/s}\bigg(\int_{B(y,r)}\big|[b,I_{\gamma}](f)(x)\big|^qw^q(x)\,dx\bigg)^{1/q}\notag\\
&\leq w^q(B(y,r))^{1/{\beta}-1/q-1/s}\bigg(\int_{B(y,r)}\big|[b,I_\gamma](f_1)(x)\big|^qw^q(x)\,dx\bigg)^{1/q}\notag\\
&+w^q(B(y,r))^{1/{\beta}-1/q-1/s}\bigg(\int_{B(y,r)}\big|[b,I_\gamma](f_2)(x)\big|^qw^q(x)\,dx\bigg)^{1/q}\notag\\
&:=J_1(y,r)+J_2(y,r).
\end{align}
Since $w$ is in the class $A_{p,q}$, we get $w^q\in A_q\subset A_{\infty}$ by Lemma \ref{relation}$(i)$. Also observe that $1/{\beta}-1/q=1/{\alpha}-1/p$ by our assumption. By using Theorem \ref{cstrong}, we obtain
\begin{align}\label{J1yr}
J_1(y,r)&\leq w^q(B(y,r))^{1/{\beta}-1/q-1/s}\big\|[b,I_{\gamma}](f_1)\big\|_{L^q(w^q)}\notag\\
&\leq C\cdot  w^q(B(y,r))^{1/{\beta}-1/q-1/s}
\bigg(\int_{B(y,2r)}|f(x)|^p w^p(x)\,dx\bigg)^{1/p}\notag\\
&=C\cdot w^q(B(y,2r))^{1/{\alpha}-1/p-1/s}\big\|f\cdot\chi_{B(y,2r)}\big\|_{L^p(w^p)}\notag\\
&\times \frac{w^q(B(y,r))^{1/{\alpha}-1/p-1/s}}{w^q(B(y,2r))^{1/{\alpha}-1/p-1/s}}\notag\\
&\leq C\cdot w^q(B(y,2r))^{1/{\alpha}-1/p-1/s}\big\|f\cdot\chi_{B(y,2r)}\big\|_{L^p(w^p)},
\end{align}
where the last inequality is due to (\ref{weights}) and the fact that $1/{\alpha}-1/p-1/s<0$. Let us now turn to the estimate of $J_2(y,r)$. By definition, for any $x\in B(y,r)$, we have
\begin{equation*}
\big|[b,I_{\gamma}](f_2)(x)\big|\leq\big|b(x)-b_{B(y,r)}\big|\cdot\big|I_{\gamma}(f_2)(x)\big|
+\Big|I_{\gamma}\big([b_{B(y,r)}-b]f_2\big)(x)\Big|.
\end{equation*}
In the proof of Theorem \ref{mainthm:1}, we have already shown that (see \eqref{pointwise1})
\begin{equation}\label{wh8}
\big|I_{\gamma}(f_2)(x)\big|\leq C\sum_{j=1}^\infty\frac{1}{|B(y,2^{j+1}r)|^{1-{\gamma}/n}}\int_{B(y,2^{j+1}r)}|f(z)|\,dz.
\end{equation}
By the same manner as in the proof of \eqref{pointwise1}, we can also show that
\begin{align}\label{pointwise2}
\Big|I_{\gamma}\big([b_{B(y,r)}-b]f_2\big)(x)\Big|
&\leq\frac{1}{\zeta(\gamma)}\int_{\mathbb R^n}\frac{|[b_{B(y,r)}-b(z)]f_2(z)|}{|x-z|^{n-\gamma}}dz\\
&\leq C\int_{B(y,2r)^c}\frac{|[b_{B(y,r)}-b(z)]f(z)|}{|y-z|^{n-\gamma}}dz\notag\\
&=C\sum_{j=1}^\infty\int_{B(y,2^{j+1}r)\backslash B(y,2^jr)}\frac{|b(z)-b_{B(y,r)}|\cdot|f(z)|}{|y-z|^{n-\gamma}}dz\notag\\
&\leq C\sum_{j=1}^\infty
\frac{1}{|B(y,2^{j+1}r)|^{1-{\gamma}/n}}\int_{B(y,2^{j+1}r)}\big|b(z)-b_{B(y,r)}\big|\cdot\big|f(z)\big|\,dz\notag.
\end{align}
Hence, from the above two pointwise estimates \eqref{wh8} and \eqref{pointwise2}, it follows that
\begin{equation*}
\begin{split}
J_2(y,r)&\leq C\cdot w^q(B(y,r))^{1/{\beta}-1/q-1/s}\bigg(\int_{B(y,r)}\big|b(x)-b_{B(y,r)}\big|^qw^q(x)\,dx\bigg)^{1/q}\\
&\times\sum_{j=1}^\infty\bigg(\frac{1}{|B(y,2^{j+1}r)|^{1-{\gamma}/n}}\int_{B(y,2^{j+1}r)}|f(z)|\,dz\bigg)\\
&+C\cdot w^q(B(y,r))^{1/{\beta}-1/s}
\sum_{j=1}^\infty\frac{1}{|B(y,2^{j+1}r)|^{1-{\gamma}/n}}\int_{B(y,2^{j+1}r)}\big|b_{B(y,2^{j+1}r)}-b_{B(y,r)}\big|\cdot|f(z)|\,dz\\
&+C\cdot w^q(B(y,r))^{1/{\beta}-1/s}
\sum_{j=1}^\infty\frac{1}{|B(y,2^{j+1}r)|^{1-{\gamma}/n}}\int_{B(y,2^{j+1}r)}\big|b(z)-b_{B(y,2^{j+1}r)}\big|\cdot|f(z)|\,dz\\
&:=J_3(y,r)+J_4(y,r)+J_5(y,r).
\end{split}
\end{equation*}
Below we will give the estimates of $J_3(y,r)$, $J_4(y,r)$ and $J_5(y,r)$, respectively.
To estimate $J_3(y,r)$, note that $w^q\in A_q\subset A_{\infty}$ with $1<q<\infty$. Using the second part of Lemma \ref{BMO}, H\"older's inequality and the $A_{p,q}$ condition on $w$, we compute
\begin{equation*}
\begin{split}
J_3(y,r)&\leq C\|b\|_*\cdot w^q(B(y,r))^{1/{\beta}-1/s}
\times\sum_{j=1}^\infty\bigg(\frac{1}{|B(y,2^{j+1}r)|^{1-{\gamma}/n}}\int_{B(y,2^{j+1}r)}|f(z)|\,dz\bigg)\\
&\leq C\|b\|_*\cdot w^q(B(y,r))^{1/{\beta}-1/s}
\sum_{j=1}^\infty\frac{1}{|B(y,2^{j+1}r)|^{1-{\gamma}/n}}
\bigg(\int_{B(y,2^{j+1}r)}|f(z)|^pw^p(z)\,dz\bigg)^{1/p}\\
&\times\bigg(\int_{B(y,2^{j+1}r)}w(z)^{-{p'}}\,dz\bigg)^{1/{p'}}\\
&\leq C\|b\|_*\cdot w^q(B(y,r))^{1/{\beta}-1/s}\\
&\times\sum_{j=1}^\infty\bigg(\int_{B(y,2^{j+1}r)}|f(z)|^pw^p(z)\,dz\bigg)^{1/p}
\cdot w^q\big(B(y,2^{j+1}r)\big)^{-1/q}.
\end{split}
\end{equation*}
To estimate $J_4(y,r)$, applying the first part of Lemma \ref{BMO}, H\"older's inequality and the $A_{p,q}$ condition on $w$, we can deduce that
\begin{equation*}
\begin{split}
J_4(y,r)&\leq C\|b\|_*\cdot w^q(B(y,r))^{1/{\beta}-1/s}
\times\sum_{j=1}^\infty\frac{(j+1)}{|B(y,2^{j+1}r)|^{1-{\gamma}/n}}\int_{B(y,2^{j+1}r)}|f(z)|\,dz\\
&\leq C\|b\|_*\cdot w^q(B(y,r))^{1/{\beta}-1/s}
\sum_{j=1}^\infty\frac{(j+1)}{|B(y,2^{j+1}r)|^{1-{\gamma}/n}}
\bigg(\int_{B(y,2^{j+1}r)}|f(z)|^pw^p(z)\,dz\bigg)^{1/p}\\
&\times\bigg(\int_{B(y,2^{j+1}r)}w(z)^{-{p'}}\,dz\bigg)^{1/{p'}}\\
&\leq C\|b\|_*\cdot w^q(B(y,r))^{1/{\beta}-1/s}\\
&\times\sum_{j=1}^\infty\big(j+1\big)\cdot\bigg(\int_{B(y,2^{j+1}r)}|f(z)|^pw^p(z)\,dz\bigg)^{1/p}
\cdot w^q\big(B(y,2^{j+1}r)\big)^{-1/q}.
\end{split}
\end{equation*}
It remains to estimate the last term $J_5(y,r)$. An application of H\"older's inequality gives us that
\begin{equation*}
\begin{split}
J_5(y,r)&\leq C\cdot w^q(B(y,r))^{1/{\beta}-1/s}\sum_{j=1}^\infty\frac{1}{|B(y,2^{j+1}r)|^{1-{\gamma}/n}}
\bigg(\int_{B(y,2^{j+1}r)}|f(z)|^pw^p(z)\,dz\bigg)^{1/p}\\
&\times\bigg(\int_{B(y,2^{j+1}r)}\big|b(z)-b_{B(y,2^{j+1}r)}\big|^{p'}w(z)^{-{p'}}\,dz\bigg)^{1/{p'}}.
\end{split}
\end{equation*}
If we set $\nu(z)=w(z)^{-{p'}}$, then we have $\nu\in A_{p'}\subset A_{\infty}$ because $w\in A_{p,q}$ by Lemma \ref{relation}$(i)$. Thus, it follows from the second part of Lemma \ref{BMO} and the $A_{p,q}$ condition on $w$ that
\begin{align}\label{WBMO}
\bigg(\int_{B(y,2^{j+1}r)}\big|b(z)-b_{B(y,2^{j+1}r)}\big|^{p'}\nu(z)\,dz\bigg)^{1/{p'}}
&\leq C\|b\|_*\cdot \nu\big(B(y,2^{j+1}r)\big)^{1/{p'}}\notag\\
&=C\|b\|_*\cdot\bigg(\int_{B(y,2^{j+1}r)}w(z)^{-{p'}}dz\bigg)^{1/{p'}}\notag\\
&\leq C\|b\|_*\cdot\frac{|B(y,2^{j+1}r)|^{1-{\gamma}/n}}{w^q(B(y,2^{j+1}r))^{1/q}}.
\end{align}
Therefore, in view of the estimate \eqref{WBMO}, we get
\begin{equation*}
\begin{split}
J_5(y,r)&\leq C\|b\|_*\cdot w^q(B(y,r))^{1/{\beta}-1/s}\\
&\times\sum_{j=1}^\infty\bigg(\int_{B(y,2^{j+1}r)}|f(z)|^pw^p(z)\,dz\bigg)^{1/p}
\cdot w^q\big(B(y,2^{j+1}r)\big)^{-1/q}.
\end{split}
\end{equation*}
Summarizing the estimates derived above, we conclude that
\begin{align}\label{J2yr}
J_2(y,r)&\leq C\|b\|_*\cdot w^q(B(y,r))^{1/{\beta}-1/s}\notag\\
&\times\sum_{j=1}^\infty\big(j+1\big)\cdot\bigg(\int_{B(y,2^{j+1}r)}|f(z)|^pw^p(z)\,dz\bigg)^{1/p}
\cdot w^q\big(B(y,2^{j+1}r)\big)^{-1/q}\notag\\
&=C\|b\|_*\sum_{j=1}^\infty w^q\big(B(y,2^{j+1}r)\big)^{1/{\beta}-1/q-1/s}\big\|f\cdot\chi_{B(y,2^{j+1}r)}\big\|_{L^p(w^p)}\notag\\
&\times\big(j+1\big)\cdot\frac{w^q(B(y,r))^{1/{\beta}-1/s}}{w^q(B(y,2^{j+1}r))^{1/{\beta}-1/s}}\notag\\
&=C\|b\|_*\sum_{j=1}^\infty w^q\big(B(y,2^{j+1}r)\big)^{1/{\alpha}-1/p-1/s}\big\|f\cdot\chi_{B(y,2^{j+1}r)}\big\|_{L^p(w^p)}\notag\\
&\times\big(j+1\big)\cdot\frac{w^q(B(y,r))^{1/{\beta}-1/s}}{w^q(B(y,2^{j+1}r))^{1/{\beta}-1/s}},
\end{align}
where in the last equality we have used the relation $1/{\beta}-1/q=1/{\alpha}-1/p$ again. Since $w^q\in A_q$ with $1<q<\infty$, then by using the inequality (\ref{compare}) with exponent $\delta>0$  together with the fact that $\beta<s$, we obtain
\begin{align}\label{psi3}
\sum_{j=1}^\infty\big(j+1\big)\cdot\frac{w^q(B(y,r))^{1/{\beta}-1/s}}{w^q(B(y,2^{j+1}r))^{1/{\beta}-1/s}}
&\leq C\sum_{j=1}^\infty\big(j+1\big)\cdot\left(\frac{|B(y,r)|}{|B(y,2^{j+1}r)|}\right)^{\delta(1/{\beta}-1/s)}\notag\\
&= C\sum_{j=1}^\infty\big(j+1\big)\cdot\left(\frac{1}{2^{(j+1)n}}\right)^{\delta(1/{\beta}-1/s)}\notag\\
&\leq C,
\end{align}
where the last series is convergent since the exponent $\delta(1/{\beta}-1/s)$ is positive.
Therefore by taking the $L^s(\mu)$-norm of both sides of \eqref{J}(with respect to the variable $y$), and then using Minkowski's inequality, \eqref{J1yr} and \eqref{J2yr}, we can get
\begin{equation*}
\begin{split}
&\Big\|w^q(B(y,r))^{1/{\beta}-1/q-1/s}\big\|[b,I_\gamma](f)\cdot\chi_{B(y,r)}\big\|_{L^q(w^q)}\Big\|_{L^s(\mu)}\\
&\leq\big\|J_1(y,r)\big\|_{L^s(\mu)}+\big\|J_2(y,r)\big\|_{L^s(\mu)}\\
&\leq C\Big\|w^q(B(y,2r))^{1/{\alpha}-1/p-1/s}\big\|f\cdot\chi_{B(y,2r)}\big\|_{L^p(w^p)}\Big\|_{L^s(\mu)}\\
&+C\sum_{j=1}^\infty\Big\|w^q\big(B(y,2^{j+1}r)\big)^{1/{\alpha}-1/p-1/s}\big\|f\cdot\chi_{B(y,2^{j+1}r)}\big\|_{L^p(w^p)}\Big\|_{L^s(\mu)}\\
&\times\big(j+1\big)\cdot\frac{w^q(B(y,r))^{1/{\beta}-1/s}}{w^q(B(y,2^{j+1}r))^{1/{\beta}-1/s}}\\
&\leq C\big\|f\big\|_{(L^p,L^s)^{\alpha}(w^p,w^q;\mu)}+C\big\|f\big\|_{(L^p,L^s)^{\alpha}(w^p,w^q;\mu)}
\times\sum_{j=1}^\infty\big(j+1\big)\cdot\frac{w^q(B(y,r))^{1/{\beta}-1/s}}{w^q(B(y,2^{j+1}r))^{1/{\beta}-1/s}}\\
&\leq C\big\|f\big\|_{(L^p,L^s)^{\alpha}(w^p,w^q;\mu)},
\end{split}
\end{equation*}
where the last inequality follows from \eqref{psi3}. Thus, by taking the supremum over all $r>0$, we complete the proof of Theorem \ref{mainthm:3}.
\end{proof}

\begin{proof}[Proof of Theorem $\ref{mainthm:4}$]
For any fixed ball $B=B(y,r)$ in $\mathbb R^n$, as before, we represent $f$ as $f=f_1+f_2$, where $f_1=f\cdot\chi_{2B}$ and $f_2=f\cdot\chi_{(2B)^c}$. Then for any given $\lambda>0$, by the linearity of the commutator operator $[b,I_{\gamma}]$, one can write
\begin{align}\label{Jprime}
&w^q(B(y,r))^{1/{\beta}-1/q-1/s}\cdot\Big[w^q\Big(\Big\{x\in B(y,r):\big|[b,I_{\gamma}](f)(x)\big|>\lambda\Big\}\Big)\Big]^{1/q}\notag\\
\leq &w^q(B(y,r))^{1/{\beta}-1/q-1/s}\cdot \Big[w^q\Big(\Big\{x\in B(y,r):\big|[b,I_{\gamma}](f_1)(x)\big|>\lambda/2\Big\}\Big)\Big]^{1/q}\notag\\
&+w^q(B(y,r))^{1/{\beta}-1/q-1/s}\cdot \Big[w^q\Big(\Big\{x\in B(y,r):\big|[b,I_{\gamma}](f_2)(x)\big|>\lambda/2\Big\}\Big)\Big]^{1/q}\notag\\
:=&J'_1(y,r)+J'_2(y,r).
\end{align}
We first consider the term $J'_1(y,r)$. By using Theorem \ref{cweak}, we get
\begin{equation*}
\begin{split}
J'_1(y,r)
&\leq C\cdot w^q(B(y,r))^{1/{\beta}-1/q-1/s}\int_{B(y,2r)}\Phi\bigg(\frac{|f(x)|}{\lambda}\bigg)\cdot w(x)\,dx\\
&=C\cdot w^q(B(y,r))^{1/{\alpha}-1-1/s}\int_{B(y,2r)}\Phi\bigg(\frac{|f(x)|}{\lambda}\bigg)\cdot w(x)\,dx,
\end{split}
\end{equation*}
where in the last equality we have used our assumption $1/{\beta}=1/{\alpha}-{\gamma}/n$.
Since $w$ is a weight in the class $A_{1,q}$, one has $w^q\in A_1\subset A_{\infty}$ by Lemma \ref{relation}$(ii)$. This fact, together with the inequalities \eqref{doubling2} and \eqref{main esti1}, gives us that
\begin{align}\label{WJ1yr}
J'_1(y,r)&\leq C\cdot\frac{w^q(B(y,2r))^{1/{\alpha}-1-1/s}w(B(y,2r))}{w(B(y,2r))}\int_{B(y,2r)}\Phi\left(\frac{|f(x)|}{\lambda}\right)\cdot w(x)\,dx\notag\\
&\leq C\cdot w^q(B(y,2r))^{1/{\alpha}-1-1/s}w(B(y,2r))\bigg\|\Phi\left(\frac{|f|}{\,\lambda\,}\right)\bigg\|_{L\log L(w),B(y,2r)}.
\end{align}
We now turn to deal with the term $J'_2(y,r)$. Recall that the following inequality
\begin{equation*}
\big|[b,I_{\gamma}](f_2)(x)\big|\leq\big|b(x)-b_{B(y,r)}\big|\cdot\big|I_{\gamma}(f_2)(x)\big|
+\Big|I_{\gamma}\big([b_{B(y,r)}-b]f_2\big)(x)\Big|
\end{equation*}
is valid. Thus, we can further decompose $J'_2(y,r)$ as
\begin{equation*}
\begin{split}
J'_2(y,r)\leq&w^q(B(y,r))^{1/{\beta}-1/q-1/s}\cdot
\Big[w^q\Big(\Big\{x\in B(y,r):\big|b(x)-b_{B(y,r)}\big|\cdot\big|I_{\gamma}(f_2)(x)\big|>\lambda/4\Big\}\Big)\Big]^{1/q}\\
&+w^q(B(y,r))^{1/{\beta}-1/q-1/s}\cdot
\Big[w^q\Big(\Big\{x\in B(y,r):\Big|I_{\gamma}\big([b_{B(y,r)}-b]f_2\big)(x)\Big|>\lambda/4\Big\}\Big)\Big]^{1/q}\\
:=&J'_3(y,r)+J'_4(y,r).
\end{split}
\end{equation*}
Applying the previous pointwise estimate \eqref{pointwise1}, Chebyshev's inequality together with Lemma \ref{BMO}$(ii)$, we deduce that
\begin{equation*}
\begin{split}
J'_3(y,r)&\leq w^q(B(y,r))^{1/{\beta}-1/q-1/s}\cdot\frac{\,4\,}{\lambda}
\bigg(\int_{B(y,r)}\big|b(x)-b_{B(y,r)}\big|^q\cdot\big|I_{\gamma}(f_2)(x)\big|^qw^q(x)\,dx\bigg)^{1/q}\\
&\leq C\cdot w^q(B(y,r))^{1/{\beta}-1/s}
\sum_{j=1}^\infty\frac{1}{|B(y,2^{j+1}r)|^{1-{\gamma}/n}}\int_{B(y,2^{j+1}r)}\frac{|f(z)|}{\lambda}\,dz\\
&\times\bigg(\frac{1}{w^q(B(y,r))}\int_{B(y,r)}\big|b(x)-b_{B(y,r)}\big|^qw^q(x)\,dx\bigg)^{1/q}\\
&\leq C\|b\|_*\sum_{j=1}^\infty\frac{1}{|B(y,2^{j+1}r)|^{1-{\gamma}/n}}
\int_{B(y,2^{j+1}r)}\frac{|f(z)|}{\lambda}\,dz\times w^q(B(y,r))^{1/{\beta}-1/s}.
\end{split}
\end{equation*}
Furthermore, note that $t\leq\Phi(t)=t\cdot(1+\log^+t)$ for any $t>0$. As we pointed out in Theorem \ref{mainthm:2} that $w^q\in A_1$ if and only if $w\in A_1\cap RH_q$, it then follows from the $A_1$ condition that
\begin{equation*}
\begin{split}
J'_3(y,r)&\leq C\|b\|_*\sum_{j=1}^\infty\frac{|B(y,2^{j+1}r)|^{{\gamma}/n}}{w(B(y,2^{j+1}r))}
\int_{B(y,2^{j+1}r)}\frac{|f(z)|}{\lambda}\cdot w(z)\,dz
\times w^q(B(y,r))^{1/{\beta}-1/s}\\
&\leq C\|b\|_*\sum_{j=1}^\infty\frac{|B(y,2^{j+1}r)|^{{\gamma}/n}}{w(B(y,2^{j+1}r))}
\int_{B(y,2^{j+1}r)}\Phi\left(\frac{|f(z)|}{\lambda}\right)\cdot w(z)\,dz
\times w^q(B(y,r))^{1/{\beta}-1/s}\\
&\leq C\|b\|_*\sum_{j=1}^\infty\bigg\|\Phi\left(\frac{|f|}{\,\lambda\,}\right)\bigg\|_{L\log L(w),B(y,2^{j+1}r)}
\times\big|B(y,2^{j+1}r)\big|^{{\gamma}/n}w^q(B(y,r))^{1/{\beta}-1/s},
\end{split}
\end{equation*}
where in the last inequality we have used the estimate \eqref{main esti1}. In view of \eqref{wq} and our assumption $1/{\beta}=1/{\alpha}-{\gamma}/n$, we have
\begin{equation*}
\begin{split}
J'_3(y,r)&\leq C\|b\|_*\sum_{j=1}^\infty\bigg\|\Phi\left(\frac{|f|}{\,\lambda\,}\right)\bigg\|_{L\log L(w),B(y,2^{j+1}r)}\times \frac{w^q(B(y,r))^{1/{\beta}-1/s}}{w^q(B(y,2^{j+1}r))^{1/q}}w\big(B(y,2^{j+1}r)\big)\\
&=C\|b\|_*\sum_{j=1}^\infty\bigg\|\Phi\left(\frac{|f|}{\,\lambda\,}\right)\bigg\|_{L\log L(w),B(y,2^{j+1}r)}\\
&\times w^q\big(B(y,2^{j+1}r)\big)^{1/{\alpha}-1-1/s}w\big(B(y,2^{j+1}r)\big)
\cdot\frac{w^q(B(y,r))^{1/{\beta}-1/s}}{w^q(B(y,2^{j+1}r))^{1/{\beta}-1/s}}.
\end{split}
\end{equation*}
On the other hand, applying the pointwise estimate \eqref{pointwise2} and Chebyshev's inequality, we get
\begin{equation*}
\begin{split}
J'_4(y,r)&\leq w^q(B(y,r))^{1/{\beta}-1/q-1/s}\cdot\frac{\,4\,}{\lambda}
\bigg(\int_{B(y,r)}\Big|I_{\gamma}\big([b_{B(y,r)}-b]f_2\big)(x)\Big|^qw^q(x)\,dx\bigg)^{1/q}\\
&\leq w^q(B(y,r))^{1/{\beta}-1/s}\cdot\frac{\,C\,}{\lambda}
\sum_{j=1}^\infty\frac{1}{|B(y,2^{j+1}r)|^{1-{\gamma}/n}}\int_{B(y,2^{j+1}r)}\big|b(z)-b_{B(y,r)}\big|\cdot|f(z)|\,dz\\
&\leq w^q(B(y,r))^{1/{\beta}-1/s}\cdot\frac{\,C\,}{\lambda}
\sum_{j=1}^\infty\frac{1}{|B(y,2^{j+1}r)|^{1-{\gamma}/n}}\int_{B(y,2^{j+1}r)}\big|b(z)-b_{B(y,2^{j+1}r)}\big|\cdot|f(z)|\,dz\\
&+w^q(B(y,r))^{1/{\beta}-1/s}\cdot\frac{\,C\,}{\lambda}
\sum_{j=1}^\infty\frac{1}{|B(y,2^{j+1}r)|^{1-{\gamma}/n}}\int_{B(y,2^{j+1}r)}\big|b_{B(y,2^{j+1}r)}-b_{B(y,r)}\big|\cdot|f(z)|\,dz\\
&:=J'_5(y,r)+J'_6(y,r).
\end{split}
\end{equation*}
For the term $J'_5(y,r)$, since $w\in A_1$, it follows directly from the $A_1$ condition and the inequality $t\leq \Phi(t)$ that
\begin{equation*}
\begin{split}
J'_5(y,r)&\leq w^q(B(y,r))^{1/{\beta}-1/s}\\
&\times\frac{\,C\,}{\lambda} \sum_{j=1}^\infty\frac{|B(y,2^{j+1}r)|^{\gamma/n}}{w(B(y,2^{j+1}r))}
\int_{B(y,2^{j+1}r)}\big|b(z)-b_{B(y,2^{j+1}r)}\big|\cdot|f(z)|w(z)\,dz\\
&\leq C\cdot w^q(B(y,r))^{1/{\beta}-1/s}\\
&\times\sum_{j=1}^\infty\frac{|B(y,2^{j+1}r)|^{\gamma/n}}{w(B(y,2^{j+1}r))}\int_{B(y,2^{j+1}r)}\big|b(z)-b_{B(y,2^{j+1}r)}\big|
\cdot\Phi\left(\frac{|f(z)|}{\lambda}\right)w(z)\,dz.
\end{split}
\end{equation*}
Furthermore, we use the generalized H\"older's inequality \eqref{Wholder} to obtain
\begin{equation*}
\begin{split}
J'_5(y,r)&\leq C\cdot w^q(B(y,r))^{1/{\beta}-1/s}\\
&\times\sum_{j=1}^\infty\big|B(y,2^{j+1}r)\big|^{\gamma/n}\cdot\big\|b-b_{B(y,2^{j+1}r)}\big\|_{\exp L(w),B(y,2^{j+1}r)}
\bigg\|\Phi\left(\frac{|f|}{\,\lambda\,}\right)\bigg\|_{L\log L(w),B(y,2^{j+1}r)}\\
&\leq C\|b\|_*\sum_{j=1}^\infty\bigg\|\Phi\left(\frac{|f|}{\,\lambda\,}\right)\bigg\|_{L\log L(w),B(y,2^{j+1}r)}
\times\big|B(y,2^{j+1}r)\big|^{\gamma/n}w^q(B(y,r))^{1/{\beta}-1/s}.
\end{split}
\end{equation*}
In the last inequality, we have used the well-known fact that (see \cite{zhang} for instance)
\begin{equation}\label{Jensen}
\big\|b-b_{B}\big\|_{\exp L(w),B}\leq C\|b\|_*,\qquad \mbox{for every ball }B\subset\mathbb R^n.
\end{equation}
It is equivalent to the inequality
\begin{equation*}
\frac{1}{w(B)}\int_B\exp\bigg(\frac{|b(y)-b_B|}{c_0\|b\|_*}\bigg)w(y)\,dy\leq C,
\end{equation*}
which is just a corollary of the well-known John--Nirenberg's inequality (see \cite{john}) and the comparison property of $A_1$ weights. In addition, by the estimate \eqref{wq}
\begin{equation*}
\begin{split}
J'_5(y,r)
&\leq C\|b\|_*\sum_{j=1}^\infty\bigg\|\Phi\left(\frac{|f|}{\,\lambda\,}\right)\bigg\|_{L\log L(w),B(y,2^{j+1}r)}\\
&\times w^q\big(B(y,2^{j+1}r)\big)^{1/{\alpha}-1-1/s}w\big(B(y,2^{j+1}r)\big)
\cdot\frac{w^q(B(y,r))^{1/{\beta}-1/s}}{w^q(B(y,2^{j+1}r))^{1/{\beta}-1/s}}.
\end{split}
\end{equation*}
For the last term $J'_6(y,r)$ we proceed as follows. Using the first part of Lemma \ref{BMO} together with the facts that $w\in A_1$ and $t\leq\Phi(t)=t\cdot(1+\log^+t)$, we deduce that
\begin{equation*}
\begin{split}
J'_6(y,r)&\leq C\cdot w^q(B(y,r))^{1/{\beta}-1/s}
\sum_{j=1}^\infty(j+1)\|b\|_*\cdot\frac{1}{|B(y,2^{j+1}r)|^{1-{\gamma}/n}}\int_{B(y,2^{j+1}r)}\frac{|f(z)|}{\lambda}\,dz\\
&\leq C\cdot w^q(B(y,r))^{1/{\beta}-1/s}
\sum_{j=1}^\infty(j+1)\|b\|_*\cdot\frac{|B(y,2^{j+1}r)|^{\gamma/n}}{w(B(y,2^{j+1}r))}\int_{B(y,2^{j+1}r)}\frac{|f(z)|}{\lambda}\cdot w(z)\,dz\\
&\leq C\|b\|_*\cdot w^q(B(y,r))^{1/{\beta}-1/s}
\sum_{j=1}^\infty(j+1)\cdot\frac{|B(y,2^{j+1}r)|^{\gamma/n}}{w(B(y,2^{j+1}r))}
\int_{B(y,2^{j+1}r)}\Phi\left(\frac{|f(z)|}{\lambda}\right)\cdot w(z)\,dz.
\end{split}
\end{equation*}
Making use of the inequalities \eqref{main esti1} and \eqref{wq}, we further obtain
\begin{equation*}
\begin{split}
J'_6(y,r)&\leq C\|b\|_*\sum_{j=1}^\infty\big(j+1\big)\cdot\bigg\|\Phi\left(\frac{|f|}{\,\lambda\,}\right)\bigg\|_{L\log L(w),B(y,2^{j+1}r)}\\
&\times\big|B(y,2^{j+1}r)\big|^{\gamma/n}w^q(B(y,r))^{1/{\beta}-1/s}\\
&\leq C\|b\|_*\sum_{j=1}^\infty\big(j+1\big)\cdot\bigg\|\Phi\left(\frac{|f|}{\,\lambda\,}\right)\bigg\|_{L\log L(w),B(y,2^{j+1}r)}\\
&\times w^q\big(B(y,2^{j+1}r)\big)^{1/{\alpha}-1-1/s}w\big(B(y,2^{j+1}r)\big)
\cdot\frac{w^q(B(y,r))^{1/{\beta}-1/s}}{w^q(B(y,2^{j+1}r))^{1/{\beta}-1/s}}.
\end{split}
\end{equation*}
Summarizing the above discussions, we conclude that
\begin{align}\label{WJ2yr}
J'_2(y,r)
&\leq C\|b\|_*\sum_{j=1}^\infty\big(j+1\big)\cdot\bigg\|\Phi\left(\frac{|f|}{\,\lambda\,}\right)\bigg\|_{L\log L(w),B(y,2^{j+1}r)}\notag\\
&\times w^q\big(B(y,2^{j+1}r)\big)^{1/{\alpha}-1-1/s}w\big(B(y,2^{j+1}r)\big)
\cdot\frac{w^q(B(y,r))^{1/{\beta}-1/s}}{w^q(B(y,2^{j+1}r))^{1/{\beta}-1/s}}\notag\\
&=C\sum_{j=1}^\infty w^q\big(B(y,2^{j+1}r)\big)^{1/{\alpha}-1-1/s}w\big(B(y,2^{j+1}r)\big)
\bigg\|\Phi\left(\frac{|f|}{\,\lambda\,}\right)\bigg\|_{L\log L(w),B(y,2^{j+1}r)}\notag\\
&\times\big(j+1\big)\cdot\frac{w^q(B(y,r))^{1/{\beta}-1/s}}{w^q(B(y,2^{j+1}r))^{1/{\beta}-1/s}}.
\end{align}
Recall that $w^q\in A_1\subset A_\infty$ with $1<q<\infty$. We can now argue exactly as we did in the estimation of \eqref{psi3} to get (now choose $\delta^*$ in \eqref{compare})
\begin{align}\label{psi4}
\sum_{j=1}^\infty\big(j+1\big)\cdot\frac{w^q(B(y,r))^{1/{\beta}-1/s}}{w^q(B(y,2^{j+1}r))^{1/{\beta}-1/s}}
&\leq C\sum_{j=1}^\infty\big(j+1\big)\cdot\left(\frac{|B(y,r)|}{|B(y,2^{j+1}r)|}\right)^{\delta^\ast(1/{\beta}-1/s)}\notag\\
&= C\sum_{j=1}^\infty\big(j+1\big)\cdot\left(\frac{1}{2^{(j+1)n}}\right)^{\delta^\ast(1/{\beta}-1/s)}\notag\\
&\leq C.
\end{align}
Notice that the exponent $\delta^*{(1/{\beta}-1/s)}$ is positive by our assumption, which guarantees that the last series is convergent.
Therefore by taking the $L^s(\mu)$-norm of both sides of \eqref{Jprime}(with respect to the variable $y$), and then using Minkowski's inequality, \eqref{WJ1yr} and \eqref{WJ2yr}, we have
\begin{equation*}
\begin{split}
&\Big\|w^q(B(y,r))^{1/{\beta}-1/q-1/s}\cdot \Big[w^q\Big(\Big\{x\in B(y,r):\big|[b,I_{\gamma}](f)(x)\big|>\lambda\Big\}\Big)\Big]^{1/q}\Big\|_{L^s(\mu)}\\
&\leq\big\|J'_1(y,r)\big\|_{L^s(\mu)}+\big\|J'_2(y,r)\big\|_{L^s(\mu)}\\
&\leq C\bigg\|w^q(B(y,2r))^{1/{\alpha}-1-1/s}w(B(y,2r))\bigg\|\Phi\left(\frac{|f|}{\,\lambda\,}\right)\bigg\|_{L\log L(w),B(y,2r)}\bigg\|_{L^s(\mu)}\\
&+C\sum_{j=1}^\infty\bigg\|w^q(B(y,2^{j+1}r))^{1/{\alpha}-1-1/s}w(B(y,2^{j+1}r))
\bigg\|\Phi\left(\frac{|f|}{\,\lambda\,}\right)\bigg\|_{L\log L(w),B(y,2^{j+1}r)}\bigg\|_{L^s(\mu)}\\
&\times\big(j+1\big)\cdot\frac{w^q(B(y,r))^{1/{\beta}-1/s}}{w^q(B(y,2^{j+1}r))^{1/{\beta}-1/s}}\\
&\leq C\bigg\|\Phi\left(\frac{|f|}{\,\lambda\,}\right)\bigg\|_{(L\log L,L^s)^{\alpha}(w,w^q;\mu)}\\
&+C\bigg\|\Phi\left(\frac{|f|}{\,\lambda\,}\right)\bigg\|_{(L\log L,L^s)^{\alpha}(w,w^q;\mu)}
\times\sum_{j=1}^\infty\big(j+1\big)\cdot\frac{w^q(B(y,r))^{1/{\beta}-1/s}}{w^q(B(y,2^{j+1}r))^{1/{\beta}-1/s}}\\
&\leq C\bigg\|\Phi\left(\frac{|f|}{\,\lambda\,}\right)\bigg\|_{(L\log L,L^s)^{\alpha}(w,w^q;\mu)},
\end{split}
\end{equation*}
where the last inequality follows from \eqref{psi4}. This completes the proof of Theorem \ref{mainthm:4}.
\end{proof}

Let $b(x)$ be a $BMO$ function on $\mathbb R^n$ and $0<\gamma<n$. The related commutator formed by fractional maximal operator $M_{\gamma}$ and $b$ is given by
\begin{equation*}
[b,M_\gamma](f)(x):=\sup_{B\ni x}\frac{1}{|B|^{1-\gamma/n}}\int_B\big|b(x)-b(y)\big|\cdot|f(y)|\,dy,
\end{equation*}
where the supremum is taken over all balls $B$ containing $x$. Obviously, $[b,M_\gamma]$ is a sublinear operator. It should be pointed out that $[b,M_{\gamma}](f)$ can be controlled pointwise by the expression given below. For any $0<\gamma<n$, $x\in\mathbb R^n$ and $r>0$, we have
\begin{equation*}
\begin{split}
\int_{\mathbb R^n}\big|b(x)-b(y)\big|\cdot\frac{|f(y)|}{|x-y|^{n-\gamma}}\,dy&\ge\int_{|y-x|<r}\frac{|b(x)-b(y)|\cdot|f(y)|}{|x-y|^{n-\gamma}}\,dy\\
&\ge \frac{1}{r^{n-\gamma}}\int_{|y-x|<r}|b(x)-b(y)|\cdot|f(y)|\,dy.
\end{split}
\end{equation*}
Taking the supremum for all $r>0$ on both sides of the above inequality, we get
\begin{equation}\label{dominate3}
\int_{\mathbb R^n}\big|b(x)-b(y)\big|\cdot\frac{|f(y)|}{|x-y|^{n-\gamma}}\,dy\geq C\cdot[b,M_{\gamma}](f)(x),\quad\mbox{for all}\; x\in\mathbb R^n,
\end{equation}
which is our desired result. Moreover, on the commutator $[b,M_\gamma]$ of the fractional maximal operator $M_{\gamma}$, we also have the following result:
\begin{theorem}[\cite{lu}]\label{bm}
Let $0<\gamma<n$, $1<p<n/{\gamma}$, $1/q=1/p-{\gamma}/n$ and $w\in A_{p,q}$. Suppose that $b\in BMO(\mathbb R^n)$, then the sublinear operator $[b,M_{\gamma}]$ is bounded from $L^p(w^p)$ to $L^q(w^q)$.
\end{theorem}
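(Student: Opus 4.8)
The plan is to deduce Theorem~\ref{bm} from a weighted $(L^p,L^q)$-bound for the positive \emph{maximal fractional commutator}
\begin{equation*}
\mathcal C_{b,\gamma}(g)(x):=\int_{\mathbb R^n}\big|b(x)-b(y)\big|\cdot\frac{g(y)}{|x-y|^{n-\gamma}}\,dy,\qquad g\ge0,
\end{equation*}
to which the Fefferman--Stein sharp maximal function machinery can be applied, since $w\in A_{p,q}$ forces all the relevant weights into $A_\infty$. By the pointwise inequality \eqref{dominate3} one has $[b,M_\gamma](f)(x)\le C\,\mathcal C_{b,\gamma}(|f|)(x)$, so it suffices to prove $\|\mathcal C_{b,\gamma}(g)\|_{L^q(w^q)}\le C\|b\|_*\|g\|_{L^p(w^p)}$ for $g\ge0$. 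The advantage of passing to $\mathcal C_{b,\gamma}$ is that it is \emph{linear} on nonnegative functions, so that the Calder\'on--Zygmund splitting used below becomes an honest identity. (Note in passing that $\mathcal C_{b,\gamma}$ also dominates $|[b,I_\gamma]g|$ pointwise, so Theorem~\ref{cstrong} would be recovered as a by-product; it is not needed as an input.) A routine truncation of $b$ to $[-N,N]$ (a $1$-Lipschitz operation, hence non-increasing for $\|\cdot\|_*$) and of $f$ to the ball $B(0,N)$, followed by Fatou's lemma, allows us to assume a priori that $\mathcal C_{b,\gamma}(g)\in L^q(w^q)$, which legitimizes the Fefferman--Stein step.

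Write $M$ for the Hardy--Littlewood maximal operator, $M^\#$ for the Fefferman--Stein sharp maximal operator, $M^\#_\delta G:=\big(M^\#(|G|^\delta)\big)^{1/\delta}$, and $M_{\gamma,L\log L}(g)(x):=\sup_{B\ni x}|B|^{\gamma/n}\|g\|_{L\log L,B}$. Fix $0<\delta<1$. Since $w\in A_{p,q}$, Lemma~\ref{relation}$(i)$ gives $w^q\in A_q\subset A_\infty$, and since $q>p$, the $A_{p,q}$ inequality together with Jensen's inequality gives $w^p\in A_p$. For $w^q\in A_\infty$ the Fefferman--Stein inequality reads $\|G\|_{L^q(w^q)}\le C\|M^\#_\delta G\|_{L^q(w^q)}$, so the whole matter reduces to the pointwise estimate
\begin{equation}\label{proposal:sharp}
M^\#_\delta\big(\mathcal C_{b,\gamma}(g)\big)(x)\le C\,\|b\|_*\Big(M(I_\gamma g)(x)+M_\gamma(g)(x)+M_{\gamma,L\log L}(g)(x)\Big).
\end{equation}
To prove \eqref{proposal:sharp}, fix a ball $B\ni x$ with center $x_B$, split $g=g\chi_{2B}+g\chi_{(2B)^c}=:g_1+g_2$, and use $\mathcal C_{b,\gamma}(g)=\mathcal C_{b,\gamma}(g_1)+\mathcal C_{b,\gamma}(g_2)$. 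For the local term one estimates its $L^\delta(B)$-average through $|b(z)-b(y)|\le|b(z)-b_{2B}|+|b(y)-b_{2B}|$, the Kolmogorov inequality for the weak $(1,n/(n-\gamma))$-bounded operator $I_\gamma$, the $L^r$ John--Nirenberg estimate (Lemma~\ref{BMO}$(ii)$ with Lebesgue measure) and the generalized H\"older inequality \eqref{holder} in the $\exp L$--$L\log L$ duality, which yields a bound $C\|b\|_*\big(M_\gamma(g)(x)+M_{\gamma,L\log L}(g)(x)\big)$. For the non-local term one subtracts the constant $c_B:=\int_{(2B)^c}|b_{2B}-b(y)|\,g(y)\,|x_B-y|^{\gamma-n}\,dy$, so that $\mathcal C_{b,\gamma}(g_2)(z)-c_B$ decomposes into: a part carrying the kernel difference $\big||z-y|^{\gamma-n}-|x_B-y|^{\gamma-n}\big|\lesssim\frac{r}{|x_B-y|}|x_B-y|^{\gamma-n}$, which after a dyadic-annulus decomposition of $(2B)^c$ and Lemma~\ref{BMO}$(i)$ (to absorb the growth $|b_{2^{j+1}B}-b_{2B}|\lesssim(j+1)\|b\|_*$ against the geometric gain) is $\lesssim\|b\|_*M_{\gamma,L\log L}(g)(x)$; and a part $|b(z)-b_{2B}|\int_{(2B)^c}g(y)|z-y|^{\gamma-n}\,dy\le|b(z)-b_{2B}|\,I_\gamma(g_2)(z)$, whose $B$-average is $\lesssim\|b\|_*$ times $I_\gamma(g_2)(x_B)\lesssim M(I_\gamma g)(x)+M_\gamma(g)(x)$ (the last step again by Kolmogorov for the $g_1$-piece). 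Collecting the three contributions gives \eqref{proposal:sharp}.

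It remains to note that each majorant in \eqref{proposal:sharp} maps $L^p(w^p)$ into $L^q(w^q)$: for $M(I_\gamma g)$ because $I_\gamma\colon L^p(w^p)\to L^q(w^q)$ (Theorem~\ref{strong}) and $M$ is bounded on $L^q(w^q)$ (as $w^q\in A_q$); for $M_\gamma(g)$ by the pointwise bound \eqref{dominate2} together with Theorem~\ref{strong}; and for $M_{\gamma,L\log L}(g)$ because $\|g\|_{L\log L,B}\le\frac{C}{|B|}\int_B M(g\chi_B)\le\frac{C}{|B|}\int_B Mg$ gives $M_{\gamma,L\log L}(g)(x)\le C\,M_\gamma(Mg)(x)$, and $M\colon L^p(w^p)\to L^p(w^p)$ (as $w^p\in A_p$) composed with $M_\gamma\colon L^p(w^p)\to L^q(w^q)$ does the job. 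Inserting these bounds into the Fefferman--Stein inequality yields $\|\mathcal C_{b,\gamma}(g)\|_{L^q(w^q)}\le C\|b\|_*\|g\|_{L^p(w^p)}$ for the truncated data, and Fatou's lemma removes the truncation.

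The main obstacle is the pointwise sharp maximal estimate \eqref{proposal:sharp}. Because of the slow decay $|x-y|^{\gamma-n}$, the naive attempt to dominate the non-local part by $M_\gamma(g)$ alone fails — the resulting series over dyadic annuli diverges — so one must (i) choose the subtracted constant $c_B$ exactly so that a kernel \emph{difference}, with its decisive factor $r/|x_B-y|$, appears, and (ii) tolerate the extra term $M(I_\gamma g)$, which is harmless only because Theorem~\ref{strong} is available. A comparable amount of bookkeeping is required for the a priori finiteness demanded by Fefferman--Stein, handled by the truncation above. A more modern alternative would be to establish a sparse domination
\begin{equation*}
\mathcal C_{b,\gamma}(g)(x)\le C\sum_{Q\in\mathcal S}|Q|^{\gamma/n}\Big(\big|b(x)-b_Q\big|\,\frac1{|Q|}\int_Q g+\frac1{|Q|}\int_Q\big|b-b_Q\big|\,g\Big)\chi_Q(x)
\end{equation*}
over a sparse family $\mathcal S$ and then invoke the known two-weight (here $A_{p,q}$) estimates for positive dyadic fractional operators, which also recovers the linear dependence on $\|b\|_*$.
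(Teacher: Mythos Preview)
The paper does not prove this statement at all: Theorem~\ref{bm} is quoted from \cite{lu} as a known result, with no argument given. So there is no ``paper's own proof'' to compare against; your proposal supplies a proof where the paper simply cites one.

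Your argument is the standard Fefferman--Stein sharp-maximal route for fractional commutators (essentially the P\'erez approach), and it is sound in its main lines. The reduction from $[b,M_\gamma]$ to the positive operator $\mathcal C_{b,\gamma}$ via \eqref{dominate3} is exactly what the paper intends, the pointwise estimate \eqref{proposal:sharp} is the well-known sharp-function bound for such commutators, and the weight-class bookkeeping is correct: $w^q\in A_q$ from Lemma~\ref{relation}(i) feeds the Fefferman--Stein step and the $L^q(w^q)$-boundedness of $M$, while $w^p\in A_p$ (which follows from $w^{-p'}\in A_{p'}$ by $A_p$-duality, or equivalently from the Jensen argument you sketch) feeds the $L^p(w^p)$-boundedness of $M$ needed for the $M_{\gamma,L\log L}\le C\,M_\gamma\circ M$ step. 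A couple of places are somewhat compressed but not problematic: in the non-local estimate, the bound $I_\gamma(g_2)(x_B)\lesssim M(I_\gamma g)(x)$ is cleanest via $I_\gamma(g_2)(x_B)\approx I_\gamma(g_2)(x)\le I_\gamma g(x)\le M(I_\gamma g)(x)$ for $x\in B$, so the extra $M_\gamma(g)$ you list there is not needed; and the a~priori finiteness for Fefferman--Stein indeed follows from your truncation because then $\mathcal C_{b,\gamma}(g)\le 2N\,I_\gamma g\in L^q(w^q)$ by Theorem~\ref{strong}. The sparse-domination alternative you mention at the end would also work and is perhaps the cleanest modern route, but it is not needed here.
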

Taking into account \eqref{dominate3} and Theorem \ref{bm}, and then using the same arguments as in the proof of Theorem $\ref{mainthm:3}$, we know that the conclusion of Theorem $\ref{mainthm:3}$ still hold for the sublinear operator $[b,M_{\gamma}]$.

\begin{corollary}
Let $0<\gamma<n$, $1<p<n/{\gamma}$, $1/q=1/p-{\gamma}/n$ and $w\in A_{p,q}$. Assume that $p\leq\alpha<\beta<s\leq\infty$, $\mu\in\Delta_2$ and $b\in BMO(\mathbb R^n)$, then the sublinear operator $[b,M_{\gamma}]$ is bounded from $(L^p,L^s)^{\alpha}(w^p,w^q;\mu)$ into $(L^q,L^s)^{\beta}(w^q;\mu)$ with $1/{\beta}=1/{\alpha}-{\gamma}/n$.
\end{corollary}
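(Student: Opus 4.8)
The plan is to reproduce the proof of Theorem~\ref{mainthm:3} line by line, with the linear commutator $[b,I_\gamma]$ replaced by the sublinear operator $[b,M_\gamma]$, invoking Theorem~\ref{bm} in place of Theorem~\ref{cstrong} for the local part and the pointwise domination \eqref{dominate3} for the tail. Fix $y\in\mathbb R^n$ and $r>0$, set $B=B(y,r)$, $2B=B(y,2r)$, and decompose $f=f_1+f_2$ with $f_1=f\cdot\chi_{2B}$ and $f_2=f\cdot\chi_{(2B)^c}$. Since $|f|\le|f_1|+|f_2|$ and $[b,M_\gamma](g)(x)=\sup_{B'\ni x}|B'|^{\gamma/n-1}\int_{B'}|b(x)-b(z)|\,|g(z)|\,dz$ is monotone and subadditive in $|g|$, we have the pointwise inequality $[b,M_\gamma](f)(x)\le[b,M_\gamma](f_1)(x)+[b,M_\gamma](f_2)(x)$. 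Hence, exactly as in \eqref{J}, it is enough to estimate
\begin{equation*}
J_1(y,r):=w^q(B)^{1/\beta-1/q-1/s}\big\|[b,M_\gamma](f_1)\cdot\chi_B\big\|_{L^q(w^q)}
\end{equation*}
and the corresponding quantity $J_2(y,r)$ with $f_2$ replacing $f_1$, and then to take the $L^s(\mu)$-norm in $y$ and the supremum over $r>0$.

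For $J_1(y,r)$ I would argue as for \eqref{J1yr}: since $w\in A_{p,q}$ gives $w^q\in A_q\subset A_\infty$ (Lemma~\ref{relation}(i)), Theorem~\ref{bm} yields $\|[b,M_\gamma](f_1)\|_{L^q(w^q)}\le C\|f\cdot\chi_{2B}\|_{L^p(w^p)}$; combining this with the identity $1/\beta-1/q-1/s=1/\alpha-1/p-1/s<0$ and the doubling inequality \eqref{weights} for $w^q$, one obtains $J_1(y,r)\le C\,w^q(B(y,2r))^{1/\alpha-1/p-1/s}\|f\cdot\chi_{B(y,2r)}\|_{L^p(w^p)}$, which is precisely \eqref{J1yr}.

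For $J_2(y,r)$ the tail is controlled through \eqref{dominate3}: for $x\in B$ one has $[b,M_\gamma](f_2)(x)\le C\int_{\mathbb R^n}|b(x)-b(z)|\,|f_2(z)|\,|x-z|^{\gamma-n}\,dz$, and since $x\in B$ and $z\in(2B)^c$ force $|x-z|\approx|y-z|$, the dyadic-annulus decomposition of \eqref{pointwise1}--\eqref{pointwise2} gives
\begin{equation*}
[b,M_\gamma](f_2)(x)\le C\sum_{j=1}^\infty\frac{1}{|B(y,2^{j+1}r)|^{1-\gamma/n}}\int_{B(y,2^{j+1}r)}|b(x)-b(z)|\,|f(z)|\,dz.
\end{equation*}
Writing $|b(x)-b(z)|\le|b(x)-b_B|+|b_B-b_{B(y,2^{j+1}r)}|+|b_{B(y,2^{j+1}r)}-b(z)|$ splits $J_2(y,r)$ into three pieces that coincide with the terms $J_3(y,r)$, $J_4(y,r)$, $J_5(y,r)$ in the proof of Theorem~\ref{mainthm:3}: the first is handled by Lemma~\ref{BMO}(ii) for $w^q$, Hölder's inequality and the $A_{p,q}$ condition; the second by Lemma~\ref{BMO}(i) (producing the factor $j+1$); and the third by Hölder with exponents $p,p'$, Lemma~\ref{BMO}(ii) applied to $\nu=w^{-p'}\in A_{p'}$ and the $A_{p,q}$ estimate \eqref{WBMO}. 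All three are dominated by
\begin{equation*}
C\|b\|_*\sum_{j=1}^\infty(j+1)\,w^q(B(y,2^{j+1}r))^{1/\alpha-1/p-1/s}\big\|f\cdot\chi_{B(y,2^{j+1}r)}\big\|_{L^p(w^p)}\cdot\frac{w^q(B)^{1/\beta-1/s}}{w^q(B(y,2^{j+1}r))^{1/\beta-1/s}},
\end{equation*}
which is exactly \eqref{J2yr}. Taking the $L^s(\mu)$-norm in $y$, using Minkowski's inequality, the definition of $\|\cdot\|_{(L^p,L^s)^\alpha(w^p,w^q;\mu)}$ and the convergence of $\sum_j(j+1)\,2^{-(j+1)n\delta(1/\beta-1/s)}$ from \eqref{psi3} (which is where the hypothesis $\beta<s$ enters), and finally the supremum over $r>0$, completes the argument.

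The only genuinely new point — and the sole obstacle — is that $[b,M_\gamma]$ is merely sublinear, so the splitting $|[b,I_\gamma](f_2)(x)|\le|b(x)-b_B|\,|I_\gamma(f_2)(x)|+|I_\gamma((b_B-b)f_2)(x)|$ used for $[b,I_\gamma]$ is not available. This is circumvented by routing the tail estimate through the pointwise domination \eqref{dominate3}, after which the kernel split $|b(x)-b(z)|\le|b(x)-b_B|+|b_B-b_{B(y,2^{j+1}r)}|+|b_{B(y,2^{j+1}r)}-b(z)|$ reproduces the three terms $J_3,J_4,J_5$ verbatim; every remaining estimate is identical to that of Theorem~\ref{mainthm:3}.
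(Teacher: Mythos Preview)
Your proposal is correct and follows exactly the approach the paper indicates: the paper merely states that, taking into account the pointwise domination \eqref{dominate3} and Theorem~\ref{bm}, the argument of Theorem~\ref{mainthm:3} carries over verbatim to $[b,M_\gamma]$. Your write-up supplies precisely those details, and your observation that the lack of linearity is circumvented by routing the tail through \eqref{dominate3} before performing the three-term split of $|b(x)-b(z)|$ is the right way to recover $J_3,J_4,J_5$.
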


Let $\mathcal L^{-\gamma/2}$ be the generalized fractional integrals of $\mathcal L$ for $0<\gamma<n$, and let $b$ be a locally integrable function on $\mathbb R^n$. The generalized commutator generated by $b$ and $\mathcal L^{-\gamma/2}$ is defined as follows.
\begin{equation}
\big[b,\mathcal L^{-\gamma/2}\big]f(x):=b(x)\mathcal L^{-\gamma/2}(f)(x)-\mathcal L^{-\gamma/2}(bf)(x).
\end{equation}
By the kernel estimate \eqref{kernelk},
\begin{equation*}
\begin{split}
\big|\big[b,\mathcal L^{-\gamma/2}\big]f(x)\big|&=\bigg|\int_{\mathbb R^n}\big[b(x)-b(y)\big]\mathcal K_\gamma(x,y)f(y)\,dy\bigg|\\
&\leq\int_{\mathbb R^n}\big|b(x)-b(y)\big||\mathcal K_\gamma(x,y)||f(y)|\,dy\\
&\leq C\int_{\mathbb R^n}\big|b(x)-b(y)\big|\cdot\frac{|f(y)|}{|x-y|^{n-\gamma}}\,dy.
\end{split}
\end{equation*}
In 2008, Auscher and Martell \cite{auscher} considered the weighted estimate for $\big[b,\mathcal L^{-\gamma/2}\big]$ and obtained the following result (see also \cite{cruz}).

\begin{theorem}[\cite{auscher}]
Let $0<\gamma<n$, $1<p<n/{\gamma}$, $1/q=1/p-{\gamma}/n$ and $w\in A_{p,q}$. Suppose that $b\in BMO(\mathbb R^n)$, then the generalized commutator $\big[b,\mathcal L^{-\gamma/2}\big]$ is bounded from $L^p(w^p)$ to $L^q(w^q)$.
\end{theorem}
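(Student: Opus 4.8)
The plan is to obtain this from the boundedness of the classical operators by \emph{pointwise domination}, exactly in the spirit of \eqref{dominate1}--\eqref{dominate3}. Indeed, the computation displayed just above the statement (which uses the kernel bound \eqref{kernelk}) already gives, for every $x\in\mathbb R^n$,
\begin{equation*}
\big|[b,\mathcal L^{-\gamma/2}]f(x)\big|\le C\int_{\mathbb R^n}\frac{|b(x)-b(y)|\,|f(y)|}{|x-y|^{n-\gamma}}\,dy=:C\,T_{b,\gamma}(|f|)(x),
\end{equation*}
so it suffices to prove that the positive operator $T_{b,\gamma}$ maps $L^p(w^p)$ into $L^q(w^q)$ whenever $b\in BMO(\mathbb R^n)$ and $w\in A_{p,q}$. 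Since $T_{b,\gamma}(|f|)(x)\ge\zeta(\gamma)\,\big|[b,I_\gamma](|f|)(x)\big|$ pointwise, $T_{b,\gamma}$ is essentially the ``absolute value'' version of the operator in Theorem \ref{cstrong}; moreover, for any ball $B$ and $g\ge0$ the elementary bound $|b(x)-b(y)|\le|b(x)-b_B|+|b(y)-b_B|$ yields $T_{b,\gamma}(g)(x)\le|b(x)-b_B|\,I_\gamma(g)(x)+I_\gamma(|b-b_B|\,g)(x)$, which is precisely the splitting that drives the usual proof of Theorem \ref{cstrong}. Thus one may either invoke that proof directly or argue as follows.

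Concretely I would run the Fefferman--Stein sharp maximal function argument. Fix $s>1$ (to be chosen close to $1$), write $M^{\#}$ for the sharp maximal operator, and set $M_{\gamma,s}g:=\big(M_\gamma(|g|^s)\big)^{1/s}$, $M_s g:=\big(M(|g|^s)\big)^{1/s}$. The first step is the pointwise estimate
\begin{equation*}
M^{\#}\big(T_{b,\gamma}f\big)(x)\le C\|b\|_*\Big(M_{\gamma,s}f(x)+M_s\big(I_\gamma f\big)(x)\Big),
\end{equation*}
obtained by fixing a ball $B\ni x$, splitting $f=f\chi_{2B}+f\chi_{(2B)^c}$, subtracting the constant $I_\gamma\big((b-b_{2B})f\chi_{(2B)^c}\big)(x_0)$ at the center $x_0$ of $B$, and estimating the three resulting pieces by H\"older's inequality, the John--Nirenberg inequality in the form $\|b-b_{2^{j+1}B}\|_{\exp L,2^{j+1}B}\le C\|b\|_*$, and the geometric decay already exploited in \eqref{pointwise1}--\eqref{pointwise2}. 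The second step is to take $L^q(w^q)$ norms. Since $w\in A_{p,q}$ we have $w^q\in A_q\subset A_\infty$ by Lemma \ref{relation}$(i)$, so the weighted Fefferman--Stein inequality $\|g\|_{L^q(w^q)}\le C\|M^{\#}g\|_{L^q(w^q)}$ is available; then $\|M_s(I_\gamma f)\|_{L^q(w^q)}\le C\|I_\gamma f\|_{L^q(w^q)}\le C\|f\|_{L^p(w^p)}$ by the $L^q(w^q)$-boundedness of $M$ on $A_q$ weights together with Theorem \ref{strong}, while $\|M_{\gamma,s}f\|_{L^q(w^q)}\le C\|f\|_{L^p(w^p)}$ follows from the weighted bound for the fractional maximal operator (which is controlled by $I_\gamma$ via \eqref{dominate2}), provided $s>1$ is taken so close to $1$ that the $L^s$-averaging can be absorbed. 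Combining the two steps gives $\|T_{b,\gamma}f\|_{L^q(w^q)}\le C\|b\|_*\|f\|_{L^p(w^p)}$, and hence the theorem.

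The main obstacle is the first step: producing the sharp maximal estimate with the \emph{correct} fractional maximal operator on the right and with an exponent $s>1$ that is still compatible with the hypothesis $w\in A_{p,q}$ --- one cannot take $s=1$ because of the $\exp L$--$L\log L$ duality needed to handle the $BMO$ factor, so one must use the openness (reverse H\"older) property of the class $A_{p,q}$ to make $s$ admissible. A secondary technical point is the a priori finiteness required to legitimately apply the weighted Fefferman--Stein inequality, which is dealt with by first proving the estimate for bounded, compactly supported $f$ (so that $T_{b,\gamma}f\in L^{q}(w^q)$ is checked directly) and then passing to the limit by density. All remaining ingredients --- the geometric series manipulations, H\"older's inequality and John--Nirenberg --- are routine and already appear in the proofs of Theorems \ref{mainthm:1} and \ref{mainthm:3}.
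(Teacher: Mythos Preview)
The paper does not supply its own proof of this theorem: it is quoted with the attribution \cite{auscher} (with a parenthetical reference to \cite{cruz}) and then used as a black box to deduce the subsequent corollary on amalgam spaces. So there is no argument in the paper to compare your sketch against.

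That said, your outline is a sound way to prove the result, and it is close in spirit to how the cited references proceed. The reduction in your first display is exactly the pointwise domination the paper records just before the theorem, so the task is really the $L^p(w^p)\to L^q(w^q)$ bound for the positive operator $T_{b,\gamma}$. Your Fefferman--Stein route---sharp-function bound plus weighted good-$\lambda$---is the standard machine for this; the three ingredients you name (John--Nirenberg in $\exp L$ form, H\"older with exponent $s>1$, geometric decay over dyadic annuli) are precisely what is needed, and the a~priori finiteness issue you flag is the correct technical caveat. One small clarification: the remark that $T_{b,\gamma}(|f|)\ge\zeta(\gamma)\big|[b,I_\gamma](|f|)\big|$ is true but does not help, since what you need is an \emph{upper} bound for $T_{b,\gamma}$; you implicitly acknowledge this by then running the sharp-maximal argument directly on $T_{b,\gamma}$, which is the right move. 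An alternative, shorter path---hinted at by the reference to \cite{cruz}---is to invoke $A_\infty$ extrapolation once one has the bound for a single pair of exponents, which avoids tracking the admissible range of $s$ by hand.
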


Hence, as a direct consequence of the above results, we can also obtain

\begin{corollary}
Let $0<\gamma<n$, $1<p<n/{\gamma}$, $1/q=1/p-{\gamma}/n$ and $w\in A_{p,q}$. Assume that $p\leq\alpha<\beta<s\leq\infty$, $\mu\in\Delta_2$ and $b\in BMO(\mathbb R^n)$, then the generalized commutator $\big[b,\mathcal L^{-\gamma/2}\big]$ is bounded from $(L^p,L^s)^{\alpha}(w^p,w^q;\mu)$ into $(L^q,L^s)^{\beta}(w^q;\mu)$ with $1/{\beta}=1/{\alpha}-{\gamma}/n$.
\end{corollary}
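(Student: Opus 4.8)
The plan is to deduce the statement from the theorem of Auscher and Martell stated just above, combined with the proof scheme of Theorem~\ref{mainthm:3}. The key observation is that the argument for Theorem~\ref{mainthm:3} uses only two properties of $[b,I_\gamma]$: that it maps $L^p(w^p)$ boundedly into $L^q(w^q)$ when $w\in A_{p,q}$ and $b\in BMO(\mathbb R^n)$, and that its kernel obeys the size bound $|[b,I_\gamma]g(x)|\le C\int_{\mathbb R^n}|b(x)-b(y)|\,|g(y)|\,|x-y|^{\gamma-n}\,dy$, which is what drives the ``far'' estimates \eqref{pointwise1}--\eqref{pointwise2}. Both hold for $[b,\mathcal L^{-\gamma/2}]$: the first is precisely the Auscher--Martell theorem, and the second follows from the pointwise kernel bound \eqref{kernelk} (as recorded in the display right before the statement). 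So I would run the proof of Theorem~\ref{mainthm:3} with $I_\gamma$ replaced by $\mathcal L^{-\gamma/2}$ throughout and with Theorem~\ref{cstrong} replaced by Auscher--Martell.

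Concretely: fix a ball $B=B(y,r)$, split $f=f_1+f_2$ with $f_1=f\cdot\chi_{2B}$, and decompose $w^q(B)^{1/\beta-1/q-1/s}\|[b,\mathcal L^{-\gamma/2}]f\cdot\chi_B\|_{L^q(w^q)}$ into a local piece and a far piece as in \eqref{J}. For the local piece I would apply the Auscher--Martell $L^p(w^p)\to L^q(w^q)$ bound to $[b,\mathcal L^{-\gamma/2}]f_1$ and then proceed exactly as for \eqref{J1yr}, using $w^q\in A_q\subset A_\infty$ (Lemma~\ref{relation}$(i)$), the doubling inequality \eqref{weights}, and $1/\alpha-1/p-1/s<0$. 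For the far piece I would first invoke the kernel bound to get $|[b,\mathcal L^{-\gamma/2}]f_2(x)|\le C\int_{(2B)^c}|b(x)-b(y)|\,|f(y)|\,|x-y|^{\gamma-n}\,dy$, then split $b(x)-b(y)=(b(x)-b_B)+(b_B-b(y))$ and decompose $(2B)^c$ into the annuli $B(y,2^{j+1}r)\setminus B(y,2^jr)$; this reproduces, line for line, the three-term bound for $J_3+J_4+J_5$ in the proof of Theorem~\ref{mainthm:3}, controlled by Lemma~\ref{BMO}, H\"older's inequality (applied also to the weight $\nu=w^{-p'}\in A_{p'}$), and the $A_{p,q}$ condition, and it yields the analogue of \eqref{J2yr}. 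Finally I would take the $L^s(\mu)$-norm in $y$, apply Minkowski's inequality, and use that the weighted series with the extra factor $j+1$ converges by \eqref{psi3}, since $\beta<s$ makes the exponent $\delta(1/\beta-1/s)$ positive; taking the supremum over $r>0$ then finishes the proof.

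The one place needing a word of care — and the only conceivable obstacle — is that the kernel estimate \eqref{kernelk} dominates $|[b,\mathcal L^{-\gamma/2}]g|$ only by the expression $\int|b(x)-b(y)|\,|g(y)|\,|x-y|^{\gamma-n}\,dy$, not by the commutator $[b,I_\gamma]$ applied to $|g|$ (whose kernel carries $b(x)-b(y)$ without absolute value). This causes no genuine trouble: in the proof of Theorem~\ref{mainthm:3} the far part is estimated precisely after passing to absolute values via the triangle inequality on $b(x)-b(y)$, and the near part is handled by the norm-boundedness input, which needs no pointwise comparison. So the ``hard part'' is merely bookkeeping — verifying that each manipulation in \eqref{pointwise1}--\eqref{pointwise2} and afterwards survives the substitution, which it does. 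An entirely parallel remark, using \eqref{dominate2} and \eqref{dominate3} together with the corresponding weighted Lebesgue bounds, gives the companion corollaries for $M_\gamma$ and $[b,M_\gamma]$.
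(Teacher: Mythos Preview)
Your proposal is correct and matches the paper's own justification: the paper states the corollary as ``a direct consequence'' of the Auscher--Martell weighted bound together with the pointwise kernel estimate \eqref{kernelk}, having just explained (for the companion corollary on $[b,M_\gamma]$) that the same arguments as in the proof of Theorem~\ref{mainthm:3} go through once one has the pointwise domination and the $L^p(w^p)\to L^q(w^q)$ boundedness. Your detailed rerun of that proof, including the care about the absolute value on $b(x)-b(y)$, is exactly the verification the paper leaves implicit.
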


\section{Proof of Theorem \ref{mainthm:end}}
This section is concerned with the proofs of Theorem \ref{mainthm:end} and the corresponding result for generalized fractional integrals $\mathcal L^{-\gamma/2}$.
\begin{proof}[Proof of Theorem $\ref{mainthm:end}$]
Let $1<p\leq\alpha<s\leq\infty$ and $f\in(L^p,L^s)^{\alpha}(w^p,w^q;\mu)$ with $w\in A_{p,q}$ and $\mu\in\Delta_2$. For any fixed ball $B=B(y,r)$ in $\mathbb R^n$, we are going to estimate the following expression
\begin{equation}\label{end1.1}
\frac{1}{|B(y,r)|}\int_{B(y,r)}\big|I_{\gamma}f(x)-(I_{\gamma}f)_{B(y,r)}\big|\,dx.
\end{equation}
Decompose $f$ as $f=f_1+f_2$, where $f_1=f\cdot\chi_{4B}$, $f_2=f\cdot\chi_{(4B)^c}$, $4B=B(y,4r)$. By the linearity of the fractional integral operator $I_{\gamma}$, the above expression \eqref{end1.1} can be divided into two parts. That is,
\begin{equation*}
\begin{split}
&\frac{1}{|B(y,r)|}\int_{B(y,r)}\big|I_{\gamma}f(x)-(I_{\gamma}f)_{B(y,r)}\big|\,dx\\
&\leq \frac{1}{|B(y,r)|}\int_{B(y,r)}\big|I_{\gamma}f_1(x)-(I_{\gamma}f_1)_{B(y,r)}\big|\,dx
+\frac{1}{|B(y,r)|}\int_{B(y,r)}\big|I_{\gamma}f_2(x)-(I_{\gamma}f_2)_{B(y,r)}\big|\,dx\\
&:=I(y,r)+II(y,r).
\end{split}
\end{equation*}
Let us first consider the term $I(y,r)$. Applying the weighted $(L^p,L^q)$-boundedness of $I_{\gamma}$ (see Theorem \ref{strong}) and H\"older's inequality, we obtain
\begin{equation*}
\begin{split}
I(y,r)&\leq\frac{2}{|B(y,r)|}\int_{B(y,r)}|I_{\gamma}f_1(x)|\,dx\\
&\leq\frac{2}{|B(y,r)|}\bigg(\int_{B(y,r)}|I_{\gamma}f_1(x)|^qw^q(x)\,dx\bigg)^{1/q}\bigg(\int_{B(y,r)} w(x)^{-q'}dx\bigg)^{1/{q'}}\\
&\leq\frac{C}{|B(y,r)|}\bigg(\int_{B(y,4r)}|f(x)|^pw^p(x)\,dx\bigg)^{1/p}\bigg(\int_{B(y,r)}w(x)^{-q'}dx\bigg)^{1/{q'}}.
\end{split}
\end{equation*}
Since $w$ is a weight in the class $A_{p,q}$, one has $w^q\in A_q\subset A_{\infty}$ by Lemma \ref{relation}$(i)$. By definition, it reads
\begin{equation*}
\bigg(\frac1{|B(y,r)|}\int_{B(y,r)}w^q(x)\,dx\bigg)^{1/q}\bigg(\frac1{|B(y,r)|}\int_{B(y,r)}\big[w^q(x)\big]^{-q'/q}\,dx\bigg)^{1/{q'}}\leq C,
\end{equation*}
which implies
\begin{equation}\label{end1.2}
\bigg(\int_{B(y,r)}w(x)^{-q'}dx\bigg)^{1/{q'}}\leq C\cdot\frac{|B(y,r)|}{w^q(B(y,r))^{1/q}}.
\end{equation}
Since $w^q\in A_q\subset A_{\infty}$, then $w^q\in\Delta_2$. Using the inequalities \eqref{end1.2} and \eqref{weights}, we have
\begin{equation}\label{IW}
\begin{split}
I(y,r)&\leq C\bigg(\int_{B(y,4r)}|f(x)|^pw^p(x)\,dx\bigg)^{1/p}\cdot\frac{1}{w^q(B(y,r))^{1/q}}\\
&\leq C\bigg(\int_{B(y,4r)}|f(x)|^pw^p(x)\,dx\bigg)^{1/p}\cdot\frac{1}{w^q(B(y,4r))^{1/q}}\\
&=C\cdot w^q(B(y,4r))^{1/{\alpha}-1/p-1/s}\big\|f\cdot\chi_{B(y,4r)}\big\|_{L^p(w^p)},
\end{split}
\end{equation}
where in the last equality we have used the hypothesis $1/{s}=1/{\alpha}-{\gamma}/n$ and $1/q=1/p-{\gamma}/n$.
We now turn to estimate the second term $II(y,r)$. For any $x\in B(y,r)$,
\begin{equation*}
\begin{split}
\big|I_{\gamma}f_2(x)-(I_{\gamma}f_2)_{B(y,r)}\big|&=\bigg|\frac{1}{|B(y,r)|}\int_{B(y,r)}\big[I_{\gamma}f_2(x)-I_{\gamma}f_2(z)\big]\,dz\bigg|\\
&=\bigg|\frac{1}{|B(y,r)|}\int_{B(y,r)}
\bigg\{\int_{B(y,4r)^c}\bigg[\frac{1}{|x-\zeta|^{n-\gamma}}-\frac{1}{|z-\zeta|^{n-\gamma}}\bigg]f(\zeta)\,d\zeta\bigg\}dz\bigg|\\
&\leq\frac{1}{|B(y,r)|}\int_{B(y,r)}
\bigg\{\int_{B(y,4r)^c}\bigg|\frac{1}{|x-\zeta|^{n-\gamma}}-\frac{1}{|z-\zeta|^{n-\gamma}}\bigg|\cdot|f(\zeta)|\,d\zeta\bigg\}dz.
\end{split}
\end{equation*}
Since both $x$ and $z$ are in $B(y,r)$, $\zeta\in B(y,4r)^c$, by a purely geometric observation, we must have $|x-\zeta|\geq 2|x-z|$. This fact along with the mean value theorem yields
\begin{align}\label{average}
\big|I_{\gamma}f_2(x)-(I_{\gamma}f_2)_{B(y,r)}\big|
&\leq\frac{C}{|B(y,r)|}\int_{B(y,r)}\bigg\{\int_{B(y,4r)^c}\frac{|x-z|}{|x-\zeta|^{n-\gamma+1}}\cdot|f(\zeta)|\,d\zeta\bigg\}dz\notag\\
&\leq C\int_{B(y,4r)^c}\frac{r}{|\zeta-y|^{n-\gamma+1}}\cdot|f(\zeta)|\,d\zeta\notag\\
&=C\sum_{j=2}^\infty\int_{B(y,2^{j+1}r)\backslash B(y,2^jr)}\frac{r}{|\zeta-y|^{n-\gamma+1}}\cdot|f(\zeta)|\,d\zeta\notag\\
&\leq C\sum_{j=2}^\infty\frac{1}{2^j}\cdot\frac{1}{|B(y,2^{j+1}r)|^{1-{\gamma}/n}}\int_{B(y,2^{j+1}r)}|f(\zeta)|\,d\zeta.
\end{align}
Furthermore, by using H\"older's inequality and $A_{p,q}$ condition on $w$, the last expression in \eqref{average} can be estimated as follows:
\begin{align}\label{end1.3}
&\frac{1}{|B(y,2^{j+1}r)|^{1-{\gamma}/n}}\int_{B(y,2^{j+1}r)}|f(\zeta)|\,d\zeta\notag\\ &\leq\frac{1}{|B(y,2^{j+1}r)|^{1-{\gamma}/n}}\bigg(\int_{B(y,2^{j+1}r)}\big|f(\zeta)\big|^pw^p(\zeta)\,d\zeta\bigg)^{1/p}
\bigg(\int_{B(y,2^{j+1}r)}w(\zeta)^{-p'}d\zeta\bigg)^{1/{p'}}\notag\\
&\leq C\bigg(\int_{B(y,2^{j+1}r)}\big|f(\zeta)\big|^pw^p(\zeta)\,d\zeta\bigg)^{1/p}
\cdot\frac{1}{w^q(B(y,2^{j+1}r))^{1/q}}\notag\\
&=C\cdot w^q\big(B(y,2^{j+1}r)\big)^{1/{\alpha}-1/p-1/s}\big\|f\cdot\chi_{B(y,2^{j+1}r)}\big\|_{L^p(w^p)},
\end{align}
where in the last equality we have used the fact that $1/{\alpha}-1/p-1/s=-1/q$ again.
From this, it readily follows that for any $x\in B(y,r)$,
\begin{equation*}
\big|I_{\gamma}f_2(x)-(I_{\gamma}f_2)_{B(y,r)}\big|
\leq C\sum_{j=2}^\infty\frac{1}{2^j}
\cdot w^q\big(B(y,2^{j+1}r)\big)^{1/{\alpha}-1/p-1/s}\big\|f\cdot\chi_{B(y,2^{j+1}r)}\big\|_{L^p(w^p)}.
\end{equation*}
Consequently,
\begin{align}\label{IIW}
II(y,r)&=\frac{1}{|B(y,r)|}\int_{B(y,r)}\big|I_{\gamma}f_2(x)-(I_{\gamma}f_2)_{B(y,r)}\big|\,dx\notag\\
&\leq C\sum_{j=2}^\infty\frac{1}{2^j}\cdot w^q\big(B(y,2^{j+1}r)\big)^{1/{\alpha}-1/p-1/s}\big\|f\cdot\chi_{B(y,2^{j+1}r)}\big\|_{L^p(w^p)}.
\end{align}
Therefore by taking the $L^s(\mu)$-norm of \eqref{end1.1}(with respect to the variable $y$), and then using Minkowski's inequality, \eqref{IW} and \eqref{IIW}, we get
\begin{equation*}
\begin{split}
&\bigg\|\frac{1}{|B(y,r)|}\int_{B(y,r)}\big|I_{\gamma}f(x)-(I_{\gamma}f)_{B(y,r)}\big|\,dx\bigg\|_{L^s(\mu)}\\
&\leq\big\|I(y,r)\big\|_{L^s(\mu)}+\big\|II(y,r)\big\|_{L^s(\mu)}\\
&\leq C\Big\|w^q(B(y,4r))^{1/{\alpha}-1/p-1/s}\big\|f\cdot\chi_{B(y,4r)}\big\|_{L^p(w^p)}\Big\|_{L^s(\mu)}\\
&+C\sum_{j=2}^\infty\frac{1}{2^j}\cdot\Big\|w^q(B(y,2^{j+1}r))^{1/{\alpha}-1/p-1/s}\big\|f\cdot\chi_{B(y,2^{j+1}r)}\big\|_{L^p(w^p)}\Big\|_{L^s(\mu)}\\
&\leq C\big\|f\big\|_{(L^p,L^s)^{\alpha}(w^p,w^q;\mu)}+C\big\|f\big\|_{(L^p,L^s)^{\alpha}(w^p,w^q;\mu)}
\times\sum_{j=2}^\infty\frac{1}{2^j}\\
&\leq C\big\|f\big\|_{(L^p,L^s)^{\alpha}(w^p,w^q;\mu)}.
\end{split}
\end{equation*}
By taking the supremum over all $r>0$, we are done.
\end{proof}
For any $f\in L^p(\mathbb R^n)$, $1\le p<\infty$, Martell \cite{martell} defined a kind of \emph{sharp} maximal function $M^{\#}_{\mathcal L} f$ associated with the semigroup $\big\{e^{-t\mathcal L}\big\}_{t>0}$ by the following expression
\begin{equation*}
M^{\#}_{\mathcal L}f(x):=\sup_{x\in B}\frac{1}{|B|}\int_B\big|f(y)-e^{-t_B{\mathcal L}}f(y)\big|\,dy,
\end{equation*}
where $t_B=r_B^2$ and $r_B$ is the radius of the ball $B$. We say that $f\in BMO_{\mathcal L}$ if the \emph{sharp} maximal function $M^{\#}_{\mathcal L} f\in L^\infty(\mathbb R^n)$, and we define $\|f\|_{BMO_{\mathcal L}}=\big\|M^{\#}_{\mathcal L} f\big\|_{L^\infty}$. Inspired by this notion and Theorem \ref{mainthm:end}, a natural question for the generalized fractional integrals $\mathcal L^{-\gamma/2}$ is the following: can we get any result corresponding to  Theorem \ref{mainthm:end} for the limiting case $\beta=s$? For this purpose, we need to introduce the following $BMO$-type space associated with the semigroup $\big\{e^{-t\mathcal L}\big\}_{t>0}$.
\begin{defn}
Let $1\leq s\leq\infty$ and $\mu\in\Delta_2$. We define the space $(BMO_{\mathcal L},L^s)(\mu)$ as the set of all locally integrable functions $f$ satisfying $\|f\|_{***}<\infty$, where
\begin{equation}\label{BMOw}
\|f\|_{***}:=\sup_{r>0}\bigg\|\frac{1}{|B(y,r)|}\int_{B(y,r)}\big|f(x)-e^{-r^2{\mathcal L}}f(x)\big|\,dx\bigg\|_{L^s(\mu)},
\end{equation}
and the $L^s(\mu)$-norm is taken with respect to the variable $y$.
\end{defn}
Based on the above notion, we can prove the following result.
\begin{theorem}\label{mainthm:end2}
Let $0<\gamma<n$, $1<p<n/{\gamma}$, $1/q=1/p-{\gamma}/n$, and let $w\in A_{p,q}$ and $\mu\in\Delta_2$. If $p\leq\alpha<s\leq\infty$ and $1/{s}=1/{\alpha}-{\gamma}/n$, then the generalized fractional integrals $\mathcal L^{-\gamma/2}$ is bounded from $(L^p,L^s)^{\alpha}(w^p,w^q;\mu)$ into $(BMO_{\mathcal L},L^s)(\mu)$.
\end{theorem}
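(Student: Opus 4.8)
The plan is to follow the proof of Theorem \ref{mainthm:end} almost verbatim, replacing the Riesz kernel $|x-\zeta|^{\gamma-n}$ by the kernel $\mathcal K_\gamma(x,\zeta)$ of $\mathcal L^{-\gamma/2}$ and the averaging $f\mapsto f_{B(y,r)}$ by the operator $e^{-r^2\mathcal L}$, and to supply two extra ingredients forced by the semigroup structure. Fix a ball $B=B(y,r)$, write $f=f_1+f_2$ with $f_1=f\cdot\chi_{4B}$, $f_2=f\cdot\chi_{(4B)^c}$ and $4B=B(y,4r)$, and split
\[
\frac1{|B(y,r)|}\int_{B(y,r)}\big|\mathcal L^{-\gamma/2}f(x)-e^{-r^2\mathcal L}\big(\mathcal L^{-\gamma/2}f\big)(x)\big|\,dx\le I(y,r)+II(y,r),
\]
the two terms being the corresponding integrals with $f$ replaced by $f_1$ and $f_2$.

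For $I(y,r)$ I would split further, $I(y,r)\le\frac1{|B|}\int_B|\mathcal L^{-\gamma/2}f_1|\,dx+\frac1{|B|}\int_B|e^{-r^2\mathcal L}(\mathcal L^{-\gamma/2}f_1)|\,dx$. The first summand is handled exactly as in \eqref{IW}: by the pointwise domination \eqref{dominate1} one has $|\mathcal L^{-\gamma/2}f_1|\le C\,I_\gamma(|f_1|)$, so Theorem \ref{strong}, H\"older's inequality, the $A_{p,q}$ condition on $w$ and the doubling of $w^q\in A_q$ give the bound $C\,w^q(B(y,4r))^{1/\alpha-1/p-1/s}\|f\cdot\chi_{B(y,4r)}\|_{L^p(w^p)}$. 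For the second summand the new observation is that the Gaussian bound \eqref{G} yields $\sup_{t>0}|e^{-t\mathcal L}g(x)|\le C\,Mg(x)$ pointwise, $M$ being the Hardy--Littlewood maximal operator; since $w^q\in A_q$, $M$ is bounded on $L^q(w^q)$, so $\|e^{-r^2\mathcal L}(\mathcal L^{-\gamma/2}f_1)\|_{L^q(w^q)}\le C\|\mathcal L^{-\gamma/2}f_1\|_{L^q(w^q)}\le C\|f_1\|_{L^p(w^p)}$ by Theorem \ref{strong}, and repeating the H\"older/$A_{p,q}$/doubling estimate of \eqref{IW} bounds this summand by the same quantity. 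Hence $I(y,r)\le C\,w^q(B(y,4r))^{1/\alpha-1/p-1/s}\|f\cdot\chi_{B(y,4r)}\|_{L^p(w^p)}$.

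The term $II(y,r)$ is the crux, and it is here that the smoothness estimate \eqref{average} of the proof of Theorem \ref{mainthm:end} must be replaced. Writing $\mathcal L^{-\gamma/2}f_2(x)-e^{-r^2\mathcal L}(\mathcal L^{-\gamma/2}f_2)(x)=(I-e^{-r^2\mathcal L})\mathcal L^{-\gamma/2}f_2(x)=\int_{(4B)^c}\mathcal K_{\gamma,r}(x,\zeta)f(\zeta)\,d\zeta$, where $\mathcal K_{\gamma,r}$ denotes the kernel of $(I-e^{-r^2\mathcal L})\mathcal L^{-\gamma/2}$, the key lemma I intend to prove is the off-diagonal regularity estimate: there are $\theta\in(0,1]$ and $C>0$, depending only on $n$ and $\gamma$, with
\[
\big|\mathcal K_{\gamma,r}(x,\zeta)\big|\le C\,\frac{r^{\theta}}{|x-\zeta|^{n-\gamma+\theta}}\qquad\text{for }|x-\zeta|\ge 2r.
\]
For its proof I would use \eqref{gefrac} and the semigroup identity $e^{-r^2\mathcal L}e^{-t\mathcal L}=e^{-(t+r^2)\mathcal L}$, together with the change of variables $u=t+r^2$, to obtain the representation
\[
\mathcal K_{\gamma,r}(x,\zeta)=\frac1{\Gamma(\gamma/2)}\bigg[\int_0^{r^2}p_t(x,\zeta)\,t^{\gamma/2-1}\,dt+\int_{r^2}^{\infty}p_t(x,\zeta)\big(t^{\gamma/2-1}-(t-r^2)^{\gamma/2-1}\big)\,dt\bigg],
\]
and then to bound each integral using only the Gaussian estimate \eqref{G} for $|p_t(x,\zeta)|$ and the elementary inequality $e^{-s}\le C_N s^{-N}$ for large $N$: in the first integral $t\le r^2\le|x-\zeta|^2/4$ forces a large negative power of $|x-\zeta|/r$; in the second, $|t^{\gamma/2-1}-(t-r^2)^{\gamma/2-1}|$ is controlled by the mean value theorem (after separating the neighbourhood $r^2<t<2r^2$), and one splits the $t$-integral at $t=|x-\zeta|^2$. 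I expect this lemma to be the main obstacle; the rest is routine. Granted the lemma, for $x\in B(y,r)$ and $\zeta\in(4B)^c$ one has $|x-\zeta|\approx|y-\zeta|\ge 3r$, so decomposing $(4B)^c=\bigcup_{j\ge2}\big(B(y,2^{j+1}r)\setminus B(y,2^jr)\big)$ and applying the lemma on each annulus, where $|y-\zeta|\approx 2^jr$, gives
\[
\big|(I-e^{-r^2\mathcal L})\mathcal L^{-\gamma/2}f_2(x)\big|\le C\sum_{j=2}^{\infty}\frac1{2^{j\theta}}\cdot\frac1{|B(y,2^{j+1}r)|^{1-\gamma/n}}\int_{B(y,2^{j+1}r)}|f(\zeta)|\,d\zeta
\]
uniformly in $x\in B(y,r)$, hence the same bound for $II(y,r)$.

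Finally, for each $j\ge2$ I would estimate $\frac1{|B(y,2^{j+1}r)|^{1-\gamma/n}}\int_{B(y,2^{j+1}r)}|f|$ exactly as in \eqref{end1.3}, by H\"older's inequality and the $A_{p,q}$ condition (using \eqref{end1.2} and $1/\alpha-1/p-1/s=-1/q$), obtaining $C\,w^q(B(y,2^{j+1}r))^{1/\alpha-1/p-1/s}\|f\cdot\chi_{B(y,2^{j+1}r)}\|_{L^p(w^p)}$. Taking the $L^s(\mu)$-norm in the variable $y$ of $I(y,r)+II(y,r)$, applying Minkowski's inequality, bounding each of the resulting terms by $\|f\|_{(L^p,L^s)^{\alpha}(w^p,w^q;\mu)}$ (the supremum defining that norm being taken over all radii), summing the convergent series $\sum_{j\ge2}2^{-j\theta}$, and finally taking the supremum over $r>0$ in the definition of $\|\cdot\|_{***}$, we conclude that $\mathcal L^{-\gamma/2}$ is bounded from $(L^p,L^s)^{\alpha}(w^p,w^q;\mu)$ into $(BMO_{\mathcal L},L^s)(\mu)$.
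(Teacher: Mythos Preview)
Your proposal is correct and follows the same architecture as the paper's proof: split $f=f_1+f_2$ over $4B$ and $(4B)^c$, control the $f_1$ contribution by the weighted $L^p\to L^q$ bound for $\mathcal L^{-\gamma/2}$ together with H\"older and \eqref{end1.2}, and control the $f_2$ contribution via a kernel estimate for $(I-e^{-r^2\mathcal L})\mathcal L^{-\gamma/2}$ followed by the annular computation \eqref{end1.3}.

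There are two points of divergence worth noting. First, for the piece $\frac1{|B|}\int_B|e^{-r^2\mathcal L}(\mathcal L^{-\gamma/2}f_1)|$, you invoke the pointwise bound $\sup_{t>0}|e^{-t\mathcal L}g|\le C\,Mg$ and the $A_q$-boundedness of $M$ on $L^q(w^q)$; the paper instead works directly with the Gaussian kernel $p_{r^2}(x,z)$, splitting $\mathbb R^n$ into $4B$ and dyadic annuli and using $|p_{r^2}(x,z)|\le Cr^{-n}$ nearby and $|p_{r^2}(x,z)|\le Cr^n|x-z|^{-2n}$ far away, which produces a series $\sum_j 2^{-jn}$. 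Your route is shorter and avoids that decomposition entirely. Second, the off-diagonal regularity estimate you set out to prove, $|\mathcal K_{\gamma,r}(x,\zeta)|\le Cr^\theta|x-\zeta|^{-(n-\gamma+\theta)}$, is exactly the lemma the paper simply quotes (with $\theta=2$, i.e.\ $|\widetilde{\mathcal K}_{\gamma,t}(x,z)|\le C\,t\,|x-z|^{-(n-\gamma+2)}$) from Deng--Duong--Sikora--Yan and Duong--Yan; so what you anticipated as the main obstacle is in fact a known result, and your representation via $\int_0^{r^2}+\int_{r^2}^\infty$ is essentially the standard proof of it. Either value of $\theta>0$ suffices for the final summation.
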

\begin{proof}
Let $1<p\leq\alpha<s\leq\infty$ and $f\in(L^p,L^s)^{\alpha}(w^p,w^q;\mu)$ with $w\in A_{p,q}$ and $\mu\in\Delta_2$. In this situation, for any given ball $B=B(y,r)$ in $\mathbb R^n$, we need to consider the following expression
\begin{equation}\label{end2.1}
\frac{1}{|B(y,r)|}\int_{B(y,r)}\big|\mathcal L^{-\gamma/2}f(x)-e^{-r^2{\mathcal L}}\big(\mathcal L^{-\gamma/2}f\big)(x)\big|\,dx.
\end{equation}
Decompose $f$ as $f=f_1+f_2$, where $f_1=f\cdot\chi_{4B}$, $f_2=f\cdot\chi_{(4B)^c}$, $4B=B(y,4r)$. Similarly, the above expression \eqref{end2.1} can be divided into three parts. That is,
\begin{equation*}
\begin{split}
&\frac{1}{|B(y,r)|}\int_{B(y,r)}\big|\mathcal L^{-\gamma/2}f(x)-e^{-r^2{\mathcal L}}\big(\mathcal L^{-\gamma/2}f\big)(x)\big|\,dx\\
&\leq \frac{1}{|B(y,r)|}\int_{B(y,r)}\big|\mathcal L^{-\gamma/2}f_1(x)\big|\,dx
+\frac{1}{|B(y,r)|}\int_{B(y,r)}\big|e^{-r^2{\mathcal L}}\big(\mathcal L^{-\gamma/2}f_1\big)(x)\big|\,dx\\
&+\frac{1}{|B(y,r)|}\int_{B(y,r)}\big|\mathcal L^{-\gamma/2}f_2(x)-e^{-r^2{\mathcal L}}\big(\mathcal L^{-\gamma/2}f_2\big)(x)\big|\,dx\\
&:=I'(y,r)+II'(y,r)+III'(y,r).
\end{split}
\end{equation*}
First let us consider the term $I'(y,r)$. By \eqref{dominate1} and Theorem \ref{strong}, we know that the generalized fractional integrals $\mathcal L^{-\gamma/2}$ is also bounded from $L^p(w^p)$ to $L^q(w^q)$ whenever $w\in A_{p,q}$. This fact along with H\"older's inequality implies
\begin{equation*}
\begin{split}
I'(y,r)&\leq\frac{1}{|B(y,r)|}\bigg(\int_{B(y,r)}|\mathcal L^{-\gamma/2}f_1(x)|^qw^q(x)\,dx\bigg)^{1/q}\bigg(\int_{B(y,r)} w(x)^{-q'}dx\bigg)^{1/{q'}}\\
&\leq\frac{C}{|B(y,r)|}\bigg(\int_{B(y,4r)}|f(x)|^pw^p(x)\,dx\bigg)^{1/p}\bigg(\int_{B(y,r)}w(x)^{-q'}dx\bigg)^{1/{q'}}.
\end{split}
\end{equation*}
We now proceed exactly as we did in the proof of Theorem \ref{mainthm:end}, then
\begin{equation}\label{IpW}
I'(y,r)\leq C\cdot w^q(B(y,4r))^{1/{\alpha}-1/p-1/s}\big\|f\cdot\chi_{B(y,4r)}\big\|_{L^p(w^p)}.
\end{equation}
For the term $II'(y,r)$, since the kernel of $e^{-r^2\mathcal L}$ is $p_{r^2}(x,z)$, then we may write
\begin{equation*}
\begin{split}
II'(y,r)\leq&\frac{1}{|B(y,r)|}\int_{B(y,r)}\bigg\{\int_{\mathbb R^n}\big|p_{r^2}(x,z)\cdot\mathcal L^{-\gamma/2}f_1(z)\big|dz\bigg\}dx\\
=&\frac{1}{|B(y,r)|}\int_{B(y,r)}\bigg\{\int_{B(y,4r)}\big|p_{r^2}(x,z)\cdot\mathcal L^{-\gamma/2}f_1(z)\big|dz\bigg\}dx\\
&+\sum_{j=2}^\infty\frac{1}{|B(y,r)|}\int_{B(y,r)}\bigg\{\int_{B(y,2^{j+1}r)\backslash B(y,2^jr)}
\big|p_{r^2}(x,z)\cdot\mathcal L^{-\gamma/2}f_1(z)\big|dz\bigg\}dx\\
=&II'_{(1)}(y,r)+II'_{(2)}(y,r).
\end{split}
\end{equation*}
For any $x\in B(y,r)$ and $z\in B(y,4r)$, by (\ref{G}), we have $\big|p_{r^2}(x,z)\big|\le C\cdot(r^2)^{-n/2}$. Thus,
\begin{equation*}
\begin{split}
II'_{(1)}(y,r)&\leq\frac{C}{|B(y,r)|}\int_{B(y,r)}\bigg\{\int_{B(y,4r)}\frac{1}{(r^2)^{n/2}}\big|\mathcal L^{-\gamma/2}f_1(z)\big|dz\bigg\}dx\\
&\leq\frac{C}{|B(y,4r)|}\int_{B(y,4r)}\big|\mathcal L^{-\gamma/2}f_1(z)\big|dz.
\end{split}
\end{equation*}
On the other hand, note that for any $x\in B(y,r)$, $z\in B(y,4r)^c$, then $|z-y|\approx|z-x|$. In this case, by using (\ref{G}) again, we get $\big|p_{r^2}(x,z)\big|\le C\cdot\frac{(r^2)^{n/2}}{|x-z|^{2n}}$. Hence,
\begin{equation*}
\begin{split}
II'_{(2)}(y,r)&\leq C\sum_{j=2}^\infty\frac{1}{|B(y,r)|}\int_{B(y,r)}
\bigg\{\int_{B(y,2^{j+1}r)\backslash B(y,2^jr)}\frac{r^n}{|x-z|^{2n}}\big|\mathcal L^{-\gamma/2}f_1(z)\big|dz\bigg\}dx\\
&\leq C\sum_{j=2}^\infty\int_{B(y,2^{j+1}r)\backslash B(y,2^jr)}\frac{r^n}{|y-z|^{2n}}\big|\mathcal L^{-\gamma/2}f_1(z)\big|dz\\
&\leq C\sum_{j=2}^\infty\frac{1}{(2^j)^n}\frac{1}{|B(y,2^{j+1}r)|}\int_{B(y,2^{j+1}r)}\big|\mathcal L^{-\gamma/2}f_1(z)\big|dz.
\end{split}
\end{equation*}
Summing up the above estimates for $II'_{(1)}(y,r)$ and $II'_{(2)}(y,r)$, we get
\begin{equation*}
II'(y,r)\leq C\sum_{j=1}^\infty\frac{1}{(2^j)^n}\frac{1}{|B(y,2^{j+1}r)|}\int_{B(y,2^{j+1}r)}\big|\mathcal L^{-\gamma/2}f_1(z)\big|dz.
\end{equation*}
Furthermore, using H\"older's inequality and invoking the weighted boundedness of $\mathcal L^{-\gamma/2}$ mentioned above, we can deduce that
\begin{equation*}
\begin{split}
II'(y,r)&\leq C\sum_{j=1}^\infty\frac{1}{(2^j)^n}\frac{1}{|B(y,2^{j+1}r)|}\\
&\times\bigg(\int_{B(y,2^{j+1}r)}|\mathcal L^{-\gamma/2}f_1(z)|^qw^q(z)\,dz\bigg)^{1/q}\bigg(\int_{B(y,2^{j+1}r)} w(z)^{-q'}dz\bigg)^{1/{q'}}\\
&\leq C\sum_{j=1}^\infty\frac{1}{(2^j)^n}\frac{1}{|B(y,2^{j+1}r)|}\\
&\times\bigg(\int_{B(y,2^{j+1}r)}|f(z)|^pw^p(z)\,dz\bigg)^{1/p}\bigg(\int_{B(y,2^{j+1}r)}w(z)^{-q'}dz\bigg)^{1/{q'}}.
\end{split}
\end{equation*}
Again, arguing as in the proof of Theorem \ref{mainthm:end}, we can also show that
\begin{equation}\label{IIpW}
II'(y,r)\leq C\sum_{j=1}^\infty\frac{1}{(2^j)^n}\cdot w^q\big(B(y,2^{j+1}r)\big)^{1/{\alpha}-1/p-1/s}\big\|f\cdot\chi_{B(y,2^{j+1}r)}\big\|_{L^p(w^p)}.
\end{equation}
In order to estimate the last term $III'(y,r)$, we need the following key lemma given in \cite{deng} (see also \cite{duong1}).
\begin{lemma}
For $0<\gamma<n$, the difference operator $(I-e^{-t\mathcal L})\mathcal L^{-\gamma/2}$ has an associated kernel $\widetilde{\mathcal K}_{\gamma,t}(x,z)$ which satisfies the following estimate:
\begin{equation}\label{widek}
\big|\widetilde{\mathcal K}_{\gamma,t}(x,z)\big|\le\frac{C}{|x-z|^{n-\gamma}}\cdot\frac{t}{|x-z|^2}.
\end{equation}
\end{lemma}
Let us return to the proof of $III'(y,r)$. By the above kernel estimate (\ref{widek}), we have
\begin{equation*}
\begin{split}
III'(y,r)&=\frac{1}{|B(y,r)|}\int_{B(y,r)}\big|(I-e^{-r^2\mathcal L})\mathcal L^{-\gamma/2}(f_2)(x)\big|\,dx\\
&\leq\frac{1}{|B(y,r)|}\int_{B(y,r)}\bigg\{\int_{B(y,4r)^c}\big|\widetilde{\mathcal K}_{\gamma,r^2}(x,z)f(z)\big|\,dz\bigg\}dx\\
&\leq\frac{C}{|B(y,r)|}\int_{B(y,r)}\bigg\{\int_{B(y,4r)^c}\frac{1}{|x-z|^{n-\gamma}}\cdot\frac{r^2}{|x-z|^2}|f(z)|\,dz\bigg\}dx\\
&\le C\int_{B(y,4r)^c}\frac{1}{|y-z|^{n-\gamma}}\cdot\frac{r^2}{|y-z|^2}|f(z)|\,dz,
\end{split}
\end{equation*}
where the last inequality is due to $|x-z|\approx|y-z|$ when $x\in B(y,r)$ and $z\in B(y,4r)^c$. Hence,
\begin{align*}
III'(y,r)
&\leq C\sum_{j=2}^\infty\frac{1}{(2^j)^2}\cdot\frac{1}{|B(y,2^{j+1}r)|^{1-{\gamma}/n}}\int_{B(y,2^{j+1}r)}|f(z)|\,dz.
\end{align*}
Moreover, in view of the estimate \eqref{end1.3}, we obtain
\begin{equation}\label{IIIpW}
III'(y,r)\leq C\sum_{j=2}^\infty\frac{1}{(2^j)^2}\cdot w^q\big(B(y,2^{j+1}r)\big)^{1/{\alpha}-1/p-1/s}\big\|f\cdot\chi_{B(y,2^{j+1}r)}\big\|_{L^p(w^p)}.
\end{equation}
Therefore by taking the $L^s(\mu)$-norm of \eqref{end2.1}(with respect to the variable $y$), and then using Minkowski's inequality, \eqref{IpW}, \eqref{IIpW} and \eqref{IIIpW}, we get
\begin{equation*}
\begin{split}
&\bigg\|\frac{1}{|B(y,r)|}\int_{B(y,r)}\big|\mathcal L^{-\gamma/2}f(x)-e^{-r^2{\mathcal L}}\big(\mathcal L^{-\gamma/2}f\big)(x)\big|\,dx\bigg\|_{L^s(\mu)}\\
&\leq\big\|I'(y,r)\big\|_{L^s(\mu)}+\big\|II'(y,r)\big\|_{L^s(\mu)}+\big\|III'(y,r)\big\|_{L^s(\mu)}\\
&\leq C\Big\|w^q(B(y,4r))^{1/{\alpha}-1/p-1/s}\big\|f\cdot\chi_{B(y,4r)}\big\|_{L^p(w^p)}\Big\|_{L^s(\mu)}\\
&+C\sum_{j=1}^\infty\frac{1}{(2^j)^n}\cdot\Big\|w^q(B(y,2^{j+1}r))^{1/{\alpha}-1/p-1/s}\big\|f\cdot\chi_{B(y,2^{j+1}r)}\big\|_{L^p(w^p)}\Big\|_{L^s(\mu)}\\
&+C\sum_{j=2}^\infty\frac{1}{(2^j)^2}\cdot\Big\|w^q(B(y,2^{j+1}r))^{1/{\alpha}-1/p-1/s}\big\|f\cdot\chi_{B(y,2^{j+1}r)}\big\|_{L^p(w^p)}\Big\|_{L^s(\mu)}\\
&\leq C\big\|f\big\|_{(L^p,L^s)^{\alpha}(w^p,w^q;\mu)}+C\big\|f\big\|_{(L^p,L^s)^{\alpha}(w^p,w^q;\mu)}
\times\sum_{j=1}^\infty\frac{1}{(2^j)^n}\\
&+C\big\|f\big\|_{(L^p,L^s)^{\alpha}(w^p,w^q;\mu)}
\times\sum_{j=2}^\infty\frac{1}{(2^j)^2}\\
&\leq C\big\|f\big\|_{(L^p,L^s)^{\alpha}(w^p,w^q;\mu)}.
\end{split}
\end{equation*}
We end the proof by taking the supremum over all $r>0$.
\end{proof}

\section{Some results on two-weight problems}

In the last section, we consider related problems about two-weight, weak type norm inequalities for $I_{\gamma}$ and $[b,I_{\gamma}]$ on weighted amalgam spaces. In \cite{cruz2}, Cruz-Uribe and P\'erez considered the problem of finding sufficient conditions on a pair of weights $(w,\nu)$ which ensure the boundedness of the operator $I_{\gamma}$ from $L^p(\nu)$ to $WL^p(w)$, where $1<p<\infty$. They gave a sufficient $A_p$-type condition (see \eqref{assump1.1} below), and proved a two-weight, weak-type $(p,p)$ inequality for $I_{\gamma}$(see also \cite{cruz3} for another, more simpler proof), which solved a problem posed by Sawyer and Wheeden in \cite{sawyer}.
\begin{theorem}[\cite{cruz2,cruz3}]\label{Two1}
Let $0<\gamma<n$ and $1<p<\infty$. Given a pair of weights $(w,\nu)$, suppose that for some $r>1$ and for all cubes $Q$ in $\mathbb R^n$,
\begin{equation}\label{assump1.1}
\big|Q\big|^{\gamma/n}\cdot\left(\frac{1}{|Q|}\int_Q w(x)^r\,dx\right)^{1/{(rp)}}\left(\frac{1}{|Q|}\int_Q \nu(x)^{-p'/p}\,dx\right)^{1/{p'}}\leq C<\infty.
\end{equation}
Then the fractional integral operator $I_\gamma$ satisfies the weak-type $(p,p)$ inequality
\begin{equation}\label{assump1.2}
w\big(\big\{x\in\mathbb R^n:\big|I_\gamma f(x)\big|>\sigma\big\}\big)
\leq \frac{C}{\sigma^p}\int_{\mathbb R^n}|f(x)|^p \nu(x)\,dx,\quad\mbox{for any }~\sigma>0,
\end{equation}
where $C$ does not depend on $f$ and $\sigma>0$.
\end{theorem}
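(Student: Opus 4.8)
The plan is to reduce \eqref{assump1.2} to a dyadic model, linearize $I_\gamma$ by a Calder\'on--Zygmund stopping-time decomposition, and then close the estimate with H\"older's inequality together with the bump hypothesis \eqref{assump1.1}. First, by splitting $f$ into its real and imaginary, positive and negative parts and then replacing $f$ by $|f|$ (which only enlarges $I_\gamma f$ pointwise), we may assume $f\ge0$; by a routine limiting argument we may assume $f$ bounded with compact support, and by homogeneity we may take $\sigma=1$. Writing $I_\gamma f$ as a sum of annular averages and covering each ball by a dyadic cube of comparable size drawn from one of at most $3^n$ shifted dyadic grids $\mathcal D_t$, one obtains the pointwise bound $I_\gamma f(x)\le C\sum_{t=1}^{3^n}I_\gamma^{\mathcal D_t}f(x)$, where $I_\gamma^{\mathcal D}f(x):=\sum_{Q\in\mathcal D}|Q|^{\gamma/n-1}\big(\int_Q f\big)\chi_Q(x)$. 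Since \eqref{assump1.1} is assumed for \emph{every} cube, it holds in particular for every cube of each $\mathcal D_t$, so it suffices to prove $w\big(\{x:I_\gamma^{\mathcal D}f(x)>1\}\big)\le C\int_{\mathbb R^n}f^p\nu$ for one fixed dyadic grid $\mathcal D$.

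For the linearization we use the Calder\'on--Zygmund level sets of the dyadic fractional maximal function $M_\gamma^{\mathcal D}f(x):=\sup_{Q\ni x,\,Q\in\mathcal D}|Q|^{\gamma/n}\langle f\rangle_Q$, with $\langle f\rangle_Q:=\frac1{|Q|}\int_Q f$. Fix a large constant $a=a(n,\gamma)>1$; for $k\in\mathbb Z$ put $\Omega_k:=\{x:M_\gamma^{\mathcal D}f(x)>a^k\}$ and write $\Omega_k=\bigcup_j Q_j^k$, where $\{Q_j^k\}_j$ are the maximal dyadic cubes with $|Q_j^k|^{\gamma/n}\langle f\rangle_{Q_j^k}>a^k$; for each fixed $k$ these cubes are pairwise disjoint, and they are nested across different values of $k$. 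Decomposing $I_\gamma^{\mathcal D}f$ scale by scale and grouping the scales comparable to the sidelengths of the $Q_j^k$, the convergence of the geometric series $\sum_{m\ge0}2^{-m\gamma}$ (this is where $\gamma>0$ enters) gives a pointwise control of $I_\gamma^{\mathcal D}f$ by the Calder\'on--Zygmund cubes, from which $w\big(\{I_\gamma^{\mathcal D}f>1\}\big)$ is bounded by a suitable weighted sum of the quantities $w(Q_j^k)$ over a relevant range of $k$.

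On a single Calder\'on--Zygmund cube $Q=Q_j^k$ we have $a^k\le|Q|^{\gamma/n}\langle f\rangle_Q$, and H\"older's inequality with exponents $p,p'$ gives
\[
a^k\le|Q|^{\gamma/n}\Big(\frac1{|Q|}\int_Q f^p\nu\Big)^{1/p}\Big(\frac1{|Q|}\int_Q\nu^{-p'/p}\Big)^{1/p'}.
\]
Raising this to the power $p$, multiplying through by $w(Q)$, estimating $w(Q)\le|Q|\big(\frac1{|Q|}\int_Q w^r\big)^{1/r}$ by H\"older once more, and invoking \eqref{assump1.1} in the equivalent form $|Q|^{\gamma p/n}\big(\frac1{|Q|}\int_Q w^r\big)^{1/r}\big(\frac1{|Q|}\int_Q\nu^{-p'/p}\big)^{p/p'}\le C^p$, we arrive at
\[
a^{kp}\,w(Q)\le C^p\int_Q f(y)^p\nu(y)\,dy.
\]
For each fixed $k$ the cubes $Q_j^k$ are disjoint, so $\sum_j w(Q_j^k)\le C^p a^{-kp}\int_{\mathbb R^n}f^p\nu$; summing the resulting geometric series over $k$, and absorbing the remaining large-scale tail by means of the extra integrability exponent $r>1$ in \eqref{assump1.1}, yields $w\big(\{I_\gamma^{\mathcal D}f>1\}\big)\le C\int_{\mathbb R^n}f^p\nu$, and undoing the reductions gives \eqref{assump1.2}.

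The local estimate is routine; the genuine difficulty lies entirely in the non-locality of $I_\gamma$. For the fractional maximal operator $M_\gamma$ the level set $\{M_\gamma^{\mathcal D}f>1\}$ is literally a disjoint union of cubes on each of which a single average $|Q|^{\gamma/n}\langle f\rangle_Q$ is large, so the argument above already closes and \eqref{assump1.1} is needed only with $r=1$. For $I_\gamma$, however, the set $\{I_\gamma^{\mathcal D}f>1\}$ also contains points at which every individual average is small but the contributions from many scales add up; controlling this accumulated tail forces a summation over the Calder\'on--Zygmund generations, and that summation converges only because of the power bump with $r>1$ --- indeed, with $r=1$ the statement is false, by the counterexamples of Sawyer and Wheeden \cite{sawyer}. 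Thus the technical heart is to carry the parameter $r$ through the stopping-time construction, choosing $a$ large enough that the per-generation loss is summable, so that the final constant depends only on $n$, $p$, $\gamma$, $r$ and the constant in \eqref{assump1.1}.
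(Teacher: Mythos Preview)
The paper does not prove this theorem; it is quoted from \cite{cruz2,cruz3} and used as a black box in the proof of Theorem~\ref{mainthm:5}. So there is no ``paper's own proof'' to compare against, and your sketch must stand on its own.

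Your overall architecture --- reduction to a dyadic model via shifted grids, Calder\'on--Zygmund stopping cubes $Q_j^k$ for $M_\gamma^{\mathcal D}$, and the per-cube H\"older estimate $a^{kp}w(Q_j^k)\le C^p\int_{Q_j^k}f^p\nu$ --- is the right one, and the closing paragraph correctly isolates \emph{why} $r>1$ is indispensable. But the proof has a genuine gap exactly at the point you flag as the ``technical heart''. The displayed per-cube bound uses only Jensen's inequality $w(Q)\le|Q|\bigl(\tfrac{1}{|Q|}\int_Q w^r\bigr)^{1/r}$ and therefore holds verbatim with $r=1$; since the theorem is false for $r=1$, that estimate alone cannot close the argument. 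Concretely, the cubes $Q_j^k$ for different $k$ are nested, not disjoint, so ``summing the geometric series over $k$'' produces $\sum_{k,j}\int_{Q_j^k}f^p\nu$, which may be infinitely larger than $\int_{\mathbb R^n}f^p\nu$. The phrases ``a suitable weighted sum over a relevant range of $k$'' and ``absorbing the remaining large-scale tail by means of the extra integrability exponent $r>1$'' are placeholders for the entire content of the proof.

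What is missing is an explicit mechanism that converts $r>1$ into a summable gain across generations. In the arguments of \cite{cruz2,cruz3} this is done (roughly) by exploiting the sparseness $|Q_j^k\setminus\Omega_{k+1}|\ge\tfrac12|Q_j^k|$ together with the strict inequality $r>1$: H\"older with exponent $r$ applied to $w\cdot\chi_{Q_j^k}$ against the disjoint pieces $E_j^k=Q_j^k\setminus\Omega_{k+1}$ yields a contraction factor $\theta<1$ per generation, and it is this $\theta$ (not the $a^{-kp}$ you wrote) that makes the sum over $k$ converge. Without writing out that step --- which is where all the difficulty lies --- the argument is incomplete.
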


Moreover, in \cite{li}, Li improved this result by replacing the ``power bump" in \eqref{assump1.1} by a smaller ``Orlicz bump". On the other hand, in \cite{liu}, Liu and Lu obtained a sufficient $A_p$-type condition for the commutator $[b,I_{\gamma}]$ to satisfy the two-weight weak type $(p,p)$ inequality, where $1<p<\infty$. That condition is an $A_p$-type condition in the scale of Orlicz spaces (see \eqref{assump2.1} below).
\begin{theorem}[\cite{liu}]\label{Two2}
Let $0<\gamma<n$, $1<p<\infty$ and $b\in BMO(\mathbb R^n)$. Given a pair of weights $(w,\nu)$, suppose that for some $r>1$ and for all cubes $Q$ in $\mathbb R^n$,
\begin{equation}\label{assump2.1}
\big|Q\big|^{\gamma/n}\cdot\left(\frac{1}{|Q|}\int_Q w(x)^r\,dx\right)^{1/{(rp)}}\big\|\nu^{-1/p}\big\|_{\mathcal A,Q}\leq C<\infty,
\end{equation}
where $\mathcal A(t)=t^{p'}\cdot(1+\log^+t)^{p'}$. Then the linear commutator $[b,I_\gamma]$ satisfies the weak-type $(p,p)$ inequality
\begin{equation}\label{assump2.2}
w\big(\big\{x\in\mathbb R^n:\big|[b,I_\gamma](f)(x)\big|>\sigma\big\}\big)
\leq \frac{C}{\sigma^p}\int_{\mathbb R^n}|f(x)|^p \nu(x)\,dx,\quad\mbox{for any }~\sigma>0,
\end{equation}
where $C$ does not depend on $f$ and $\sigma>0$.
\end{theorem}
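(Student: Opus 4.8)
The plan is to reduce $[b,I_\gamma]$ to a positive dyadic (sparse) model operator, in the same spirit as the proof of Theorem~\ref{Two1}, and then to carry the extra $BMO$ factor through a stopping-time argument in which the Orlicz bump in \eqref{assump2.1} is used — via the generalized H\"older inequalities \eqref{holder}--\eqref{three} — precisely to absorb the logarithmic loss the commutator introduces. By the usual truncation and density reductions we may assume $f\ge 0$ is bounded with compact support and $\sigma>0$ is fixed. Using the pointwise domination of $I_\gamma$ by finitely many dyadic fractional integral operators, and each of those by a sparse operator, one obtains, for some sparse family $\mathcal S$ in a dyadic grid and with $\langle g\rangle_Q:=\tfrac1{|Q|}\int_Q g$,
\[
\big|[b,I_\gamma]f(x)\big|\le C\sum_{Q\in\mathcal S}|Q|^{\gamma/n}\Big(\big|b(x)-b_Q\big|\langle f\rangle_Q+\big\langle|b-b_Q|\,f\big\rangle_Q\Big)\chi_Q(x)=:T_1f(x)+T_2f(x),
\]
with $C$ depending only on $n,\gamma$; it then suffices to prove the weak-type bound \eqref{assump2.2} for $T_1$ and $T_2$ separately.

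For $T_2$ I would fix the level set $\{T_2f>\sigma\}$, pass to the associated stopping cubes, and on each $Q\in\mathcal S$ estimate $\langle|b-b_Q|f\rangle_Q$ by first using the normalized H\"older inequality to separate $f\nu^{1/p}$ (in $L^p$) from $|b-b_Q|\nu^{-1/p}$ (in $L^{p'}$), and then applying the generalized H\"older inequality \eqref{three} to the second factor so that the oscillation $b-b_Q$ is controlled in the $\exp L$ average by $\|b\|_*$ (John--Nirenberg) while $\nu^{-1/p}$ is measured in the average $\|\nu^{-1/p}\|_{\mathcal A,Q}$ with $\mathcal A(t)=t^{p'}(1+\log^+t)^{p'}$ — exactly the quantity in \eqref{assump2.1}. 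The leftover power $|Q|^{\gamma/n}\big(\tfrac1{|Q|}\int_Q w^r\big)^{1/(rp)}$ is then matched against the level-set information, and the sparseness (Carleson packing) of $\mathcal S$ lets one sum $w\big(\bigcup_j Q_j\big)$ geometrically; this is the mechanism underlying Theorem~\ref{Two1}, now with one extra application of \eqref{three} inserted.

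The term $T_1$, in which the $BMO$ factor sits outside the average, is the more delicate one. Here I would use a P\'erez-type splitting: for each stopping cube $Q$, divide $Q$ into the set where $|b(x)-b_Q|$ exceeds a large multiple of $\|b\|_*$ and its complement. On the complement one loses only a bounded factor, so $T_1f$ is pointwise dominated there by a plain sparse fractional sum and is handled by the estimate just proven for the non-commutator operator under the stronger hypothesis \eqref{assump2.1}; on the exceptional set one uses a duality/testing description of the weak norm $\|\cdot\|_{L^{p,\infty}(w)}$ together with the Orlicz H\"older inequality \eqref{holder} applied now with $w$ in the $L\log L$ average — which is controlled by the power bump $\big(\tfrac1{|Q|}\int_Q w^r\big)^{1/(rp)}$ since $L^r\subset L\log L$ for $r>1$ — again followed by a geometric summation over the sparse cubes. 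Summing over the finitely many shifted grids and undoing the reductions then yields \eqref{assump2.2}.

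The step I expect to be the main obstacle is the simultaneous bookkeeping of the two asymmetric bumps and the fractional scaling through the stopping-time argument: one must verify that the single power bump on $w$, together with the room that $r>1$ provides, still suffices after the commutator forces the stronger bump $\mathcal A(t)=t^{p'}(1+\log^+t)^{p'}$ on $\nu^{-1/p}$, and that the factor $|Q|^{\gamma/n}$ does not spoil the Carleson packing estimate. In particular, checking that $\mathcal A$ is exactly the Young function that pairs with the $\exp L$ bound for $b-b_Q$ in \eqref{three}, with all exponents consistent, is the delicate point, while the remaining arguments are the routine machinery already used for Theorems~\ref{Two1} and \ref{mainthm:4}.
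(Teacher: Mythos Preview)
This theorem is not proved in the paper; it is quoted verbatim from Liu and Lu \cite{liu} and used as a black box in the proof of Theorem~\ref{mainthm:6}. There is therefore no ``paper's own proof'' to compare your attempt against.

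As for the approach itself: your plan via sparse domination of $[b,I_\gamma]$ is a modern route that did not exist when \cite{liu} was written (2004). The original argument of Liu--Lu proceeds instead by a Calder\'on--Zygmund decomposition of $f$ at height $\sigma$ adapted to the weight $\nu$, splitting $f=g+h$ into good and bad parts, and then estimating the bad part via direct kernel estimates on $[b,I_\gamma]$ combined with the Orlicz--John--Nirenberg inequality; the power bump on $w$ enters through a covering/packing argument on the Calder\'on--Zygmund cubes rather than through sparseness. Your sparse scheme, if carried out, would likely be cleaner and more modular, since the pointwise domination localizes the problem and the two bumps in \eqref{assump2.1} pair exactly with the two factors produced by \eqref{three}; the classical argument is more hands-on but avoids importing the sparse machinery.

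That said, what you have written is a plan, not a proof. The $T_2$ part is essentially routine once the sparse bound is in hand, and your use of \eqref{three} with $\mathcal B(t)\approx e^t-1$ and $\mathcal A(t)=t^{p'}(1+\log^+t)^{p'}$ is the correct pairing. The $T_1$ part is where the real work lies: the ``P\'erez-type splitting'' you invoke needs to be made precise, and in particular you must check that the exceptional set where $|b-b_Q|$ is large can be controlled in $w$-measure using only the power bump $(\tfrac{1}{|Q|}\int_Q w^r)^{1/(rp)}$, without an Orlicz bump on the $w$ side. This is true (via $L^r\hookrightarrow L\log L$ locally, as you note), but the duality argument you sketch for the weak norm has to be written out carefully so that the fractional factor $|Q|^{\gamma/n}$ and the Carleson packing both survive. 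None of this is wrong, but none of it is yet done.
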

Here and in what follows, all cubes are assumed to have their sides parallel to the coordinate axes, $Q(y,\ell)$ will denote the cube centered at $y$ and has side length $\ell$. For any cube $Q(y,\ell)$ and any $\lambda>0$, $\lambda Q$ stands for the cube concentric with $Q$ and having side length $\lambda$ times as long, i.e., $\lambda Q:=Q(y,\lambda\ell)$. We now extend the results mentioned above to the weighted amalgam spaces.

\begin{theorem}\label{mainthm:5}
Let $0<\gamma<n$, $1<p\leq\alpha<s\leq\infty$ and $\mu\in\Delta_2$. Given a pair of weights $(w,\nu)$, suppose that for some $r>1$ and for all cubes $Q$ in $\mathbb R^n$, \eqref{assump1.1} holds.
If $w\in \Delta_2$, then the fractional integral operator $I_{\gamma}$ is bounded from $(L^p,L^s)^{\alpha}(\nu,w;\mu)$ into $(WL^p,L^s)^{\alpha}(w;\mu)$.
\end{theorem}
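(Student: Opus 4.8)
The plan is to follow the scheme of the proofs of Theorems \ref{mainthm:1} and \ref{mainthm:2}, with the unweighted/one‑weight strong estimate replaced by the two‑weight weak‑type estimate of Theorem \ref{Two1}. Fix $1<p\le\alpha<s\le\infty$ and $f\in(L^p,L^s)^{\alpha}(\nu,w;\mu)$, fix a ball $B=B(y,r)$, and split $f=f_1+f_2$ with $f_1=f\cdot\chi_{2B}$ and $f_2=f\cdot\chi_{(2B)^c}$. By the linearity of $I_\gamma$ and the (quasi‑)subadditivity of the weak norm,
\begin{equation*}
w(B(y,r))^{1/{\alpha}-1/p-1/s}\big\|I_\gamma(f)\cdot\chi_{B(y,r)}\big\|_{WL^p(w)}\leq I_1(y,r)+I_2(y,r),
\end{equation*}
where $I_i(y,r)$ is the corresponding quantity for $f_i$, and the goal is to bound the $L^s(\mu)$‑norm in $y$ of each piece by $C\|f\|_{(L^p,L^s)^{\alpha}(\nu,w;\mu)}$, uniformly in $r$.

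For the local term $I_1$: since the hypothesis \eqref{assump1.1} is assumed for all cubes, Theorem \ref{Two1} applies and gives $\big\|I_\gamma(f_1)\big\|_{WL^p(w)}\le C\big\|f\cdot\chi_{2B}\big\|_{L^p(\nu)}$. Because $1/\alpha-1/p-1/s\le 0$ and $w\in\Delta_2$ (the case of equality being trivial), one may replace $w(B(y,r))$ by $w(B(y,2r))$ at the cost of a constant, so that $I_1(y,r)\le C\,w(B(y,2r))^{1/{\alpha}-1/p-1/s}\big\|f\cdot\chi_{B(y,2r)}\big\|_{L^p(\nu)}$, whose $L^s(\mu)$‑norm is at most $C\|f\|_{(L^p,L^s)^{\alpha}(\nu,w;\mu)}$ by definition. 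For the global term $I_2$: the pointwise estimate \eqref{pointwise1} gives, for every $x\in B(y,r)$,
\begin{equation*}
\big|I_{\gamma}(f_2)(x)\big|\leq C\sum_{j=1}^\infty\frac{1}{|B(y,2^{j+1}r)|^{1-\gamma/n}}\int_{B(y,2^{j+1}r)}|f(z)|\,dz,
\end{equation*}
the right side being a constant in $x$; hence $\big\|I_\gamma(f_2)\cdot\chi_{B(y,r)}\big\|_{WL^p(w)}$ is bounded by $w(B(y,r))^{1/p}$ times that sum. Applying H\"older's inequality with the splitting $\nu^{1/p}\cdot\nu^{-1/p}$ and then the pair condition \eqref{assump1.1} on the ball $B(y,2^{j+1}r)$ — legitimate after comparing it with circumscribed and inscribed cubes of comparable size — together with Jensen's inequality $\frac{1}{|Q|}\int_Q w\le\big(\frac1{|Q|}\int_Q w^r\big)^{1/r}$, one obtains
\begin{equation*}
\frac{1}{|B(y,2^{j+1}r)|^{1-\gamma/n}}\int_{B(y,2^{j+1}r)}|f(z)|\,dz\leq C\,w\big(B(y,2^{j+1}r)\big)^{-1/p}\big\|f\cdot\chi_{B(y,2^{j+1}r)}\big\|_{L^p(\nu)},
\end{equation*}
so that, after inserting the factor $w(B(y,2^{j+1}r))^{1/\alpha-1/p-1/s}$,
\begin{equation*}
I_2(y,r)\leq C\sum_{j=1}^\infty w\big(B(y,2^{j+1}r)\big)^{1/{\alpha}-1/p-1/s}\big\|f\cdot\chi_{B(y,2^{j+1}r)}\big\|_{L^p(\nu)}\cdot\frac{w(B(y,r))^{1/\alpha-1/s}}{w(B(y,2^{j+1}r))^{1/\alpha-1/s}}.
\end{equation*}

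Taking the $L^s(\mu)$‑norm in $y$ and using Minkowski's inequality, the proof reduces to summing the series $\sum_{j}\big(w(B(y,r))/w(B(y,2^{j+1}r))\big)^{1/\alpha-1/s}$. This is the point where the argument departs from Theorems \ref{mainthm:1}–\ref{mainthm:4}: there the relevant weight lies in $A_\infty$ and one invokes \eqref{compare}, whereas here $w$ is only assumed doubling. The remedy — and the main technical step — is the standard fact that a doubling measure on $\mathbb R^n$ is automatically reverse doubling: choosing, for each ball $B'$, a ball $B''\subset 2B'\setminus B'$ of radius comparable to that of $B'$ and applying \eqref{weights} a bounded number of times to pass from $B''$ to a ball containing $B'$ yields $w(2B')\ge(1+c)\,w(B')$ for some $c=c(n,w)>0$; iterating gives $w(B(y,r))/w(B(y,2^{j+1}r))\le(1+c)^{-(j+1)}$. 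Since $1/\alpha-1/s>0$ (because $\alpha<s$), the series is dominated by the convergent geometric series $\sum_j(1+c)^{-(j+1)(1/\alpha-1/s)}$, with a bound independent of $y$; this lets us pull it out of the $L^s(\mu)$‑norm and conclude $\|I_2(y,r)\|_{L^s(\mu)}\le C\|f\|_{(L^p,L^s)^{\alpha}(\nu,w;\mu)}$. Taking the supremum over $r>0$ finishes the proof. The main obstacle is precisely this tail estimate: one must extract geometric decay from the mere doubling of $w$ rather than from an $A_\infty$ comparison, and one must be careful that the cube‑formulation of \eqref{assump1.1} is transferred to the balls $B(y,2^{j+1}r)$ only through the elementary comparison of balls with cubes.
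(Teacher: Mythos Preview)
Your proposal is correct and follows essentially the same strategy as the paper's own proof: split $f=f_1+f_2$, handle the local piece via Theorem~\ref{Two1} and the doubling of $w$, treat the global piece by the pointwise annulus estimate together with H\"older's inequality and condition~\eqref{assump1.1} (using $w(Q)\le|Q|^{1/r'}\big(\int_Q w^r\big)^{1/r}$, which is exactly your Jensen step), and sum the tail via the reverse doubling of $w$. The only cosmetic differences are that the paper works with cubes throughout---thus avoiding the ball/cube comparison you correctly flag---and that it cites the reverse doubling property (Lemma~4.1 of \cite{komori}) rather than sketching it as you do.
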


\begin{theorem}\label{mainthm:6}
Let $0<\gamma<n$, $1<p\leq\alpha<s\leq\infty$, $\mu\in\Delta_2$ and $b\in BMO(\mathbb R^n)$. Given a pair of weights $(w,\nu)$, suppose that for some $r>1$ and for all cubes $Q$ in $\mathbb R^n$, \eqref{assump2.1} holds. If $w\in A_\infty$, then the linear commutator $[b,I_{\gamma}]$ is bounded from $(L^p,L^s)^{\alpha}(\nu,w;\mu)$ into $(WL^p,L^s)^{\alpha}(w;\mu)$.
\end{theorem}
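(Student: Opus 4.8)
The plan is to follow the scheme of the proofs of Theorems \ref{mainthm:2}--\ref{mainthm:4}: use Theorem \ref{Two2} as a black box for the local piece, and control the tail by the pointwise bounds \eqref{wh8} and \eqref{pointwise2} combined with a scale-by-scale form of the Orlicz-bump condition \eqref{assump2.1}. Fix a ball $B=B(y,r)$, write $f=f_1+f_2$ with $f_1=f\cdot\chi_{2B}$ and $f_2=f\cdot\chi_{(2B)^c}$, and, using $\{x:|[b,I_\gamma]f(x)|>\lambda\}\subseteq\{|[b,I_\gamma]f_1|>\lambda/2\}\cup\{|[b,I_\gamma]f_2|>\lambda/2\}$, split
\[
w(B(y,r))^{1/\alpha-1/p-1/s}\big\|[b,I_\gamma]f\cdot\chi_{B(y,r)}\big\|_{WL^p(w)}\le K_1(y,r)+K_2(y,r),
\]
where $K_i$ is the contribution of $f_i$. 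The goal is to bound $\|K_1\|_{L^s(\mu)}$ and $\|K_2\|_{L^s(\mu)}$ by $C\|f\|_{(L^p,L^s)^\alpha(\nu,w;\mu)}$ uniformly in $r>0$; here $w\in A_\infty$ and $\mu\in\Delta_2$. Since \eqref{assump2.1} is stated over cubes, I will freely replace balls by comparable cubes, as the paper permits.

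For $K_1$, Theorem \ref{Two2} applied to $f_1$ gives $\|[b,I_\gamma]f_1\|_{WL^p(w)}\le C\|f\cdot\chi_{2B}\|_{L^p(\nu)}$, whence $K_1(y,r)\le C\,w(B(y,r))^{1/\alpha-1/p-1/s}\|f\cdot\chi_{B(y,2r)}\|_{L^p(\nu)}$. Since $1/\alpha-1/p-1/s\le0$ and $w\in A_\infty\subset\Delta_2$, the doubling inequality \eqref{weights} lets us replace $w(B(y,r))$ by $w(B(y,2r))$, and taking $L^s(\mu)$-norms (with $2r$ ranging over all radii) yields $\|K_1\|_{L^s(\mu)}\le C\|f\|_{(L^p,L^s)^\alpha(\nu,w;\mu)}$.

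For $K_2$ one uses, for $x\in B$, the bound $|[b,I_\gamma]f_2(x)|\le|b(x)-b_B|\,|I_\gamma f_2(x)|+|I_\gamma((b-b_B)f_2)(x)|$ together with \eqref{wh8} and \eqref{pointwise2}, splitting off $b_{2^{j+1}B}-b_B$ (controlled by $C(j+1)\|b\|_*$ via Lemma \ref{BMO}$(i)$). The right-hand sides are constants on $B$, so $\|[b,I_\gamma]f_2\cdot\chi_B\|_{WL^p(w)}$ is estimated by $\|\chi_B\|_{L^p(w)}=w(B)^{1/p}$ and by $\big(\int_B|b-b_B|^pw\big)^{1/p}\le C\|b\|_*w(B)^{1/p}$ (Lemma \ref{BMO}$(ii)$, where $w\in A_\infty$ is used). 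The decisive point is to prove, for every dilate $Q=2^{j+1}B$,
\[
\frac{1}{|Q|^{1-\gamma/n}}\int_Q|f|\le C\,\frac{\|f\cdot\chi_Q\|_{L^p(\nu)}}{w(Q)^{1/p}},\qquad
\frac{1}{|Q|^{1-\gamma/n}}\int_Q|b-b_Q|\,|f|\le C\|b\|_*\,\frac{\|f\cdot\chi_Q\|_{L^p(\nu)}}{w(Q)^{1/p}}.
\]
The first follows from Hölder's inequality, the bound $\|\nu^{-1/p}\|_{L^{p'},Q}\le\|\nu^{-1/p}\|_{\mathcal A,Q}$ (valid since $\mathcal A(t)=t^{p'}(1+\log^+t)^{p'}\ge t^{p'}$), the hypothesis \eqref{assump2.1}, and the elementary estimate $\big(\frac{1}{|Q|}\int_Q w^r\big)^{1/(rp)}\ge(w(Q)/|Q|)^{1/p}$. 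For the second, factor $|b-b_Q|\,|f|=(|f|\nu^{1/p})\cdot((b-b_Q)\nu^{-1/p})$, apply ordinary Hölder and then the generalized Hölder inequality \eqref{three} in the form $\|(b-b_Q)\nu^{-1/p}\|_{L^{p'},Q}\le C\|b-b_Q\|_{\exp L,Q}\|\nu^{-1/p}\|_{\mathcal A,Q}$ — the inverse-function inequality for the triple $(\exp L,\mathcal A,L^{p'})$ holds precisely because $\mathcal A$ carries the logarithmic power $p'$, $(\exp L)^{-1}(t)\,\mathcal A^{-1}(t)\approx\log(e+t)\cdot t^{1/p'}/\log(e+t)=t^{1/p'}$ — followed by John--Nirenberg, $\|b-b_Q\|_{\exp L,Q}\le C\|b\|_*$, and again \eqref{assump2.1}; this is exactly the scale-localized version of the argument behind Theorem \ref{Two2}, and I expect it to be the main technical obstacle.

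Combining these estimates gives $K_2(y,r)\le C\|b\|_*\,w(B(y,r))^{1/\alpha-1/s}\sum_{j\ge1}(j+1)\,\|f\cdot\chi_{B(y,2^{j+1}r)}\|_{L^p(\nu)}\,w(B(y,2^{j+1}r))^{-1/p}$, which we rewrite as $C\|b\|_*\sum_{j\ge1}(j+1)\,w(B(y,2^{j+1}r))^{1/\alpha-1/p-1/s}\|f\cdot\chi_{B(y,2^{j+1}r)}\|_{L^p(\nu)}\cdot\big(w(B(y,r))/w(B(y,2^{j+1}r))\big)^{1/\alpha-1/s}$. Since $w\in A_\infty$ and $1/\alpha-1/s>0$ (because $\alpha<s$), inequality \eqref{compare} bounds the last factor by $C\,2^{-(j+1)n\delta(1/\alpha-1/s)}$, uniformly in $y$ and $r$; taking $L^s(\mu)$-norms through Minkowski's inequality makes each summand $\le C\|b\|_*\,2^{-(j+1)n\delta(1/\alpha-1/s)}\|f\|_{(L^p,L^s)^\alpha(\nu,w;\mu)}$, and $\sum_{j\ge1}(j+1)2^{-(j+1)n\delta(1/\alpha-1/s)}<\infty$ exactly as in \eqref{psi3}. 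Adding the $K_1$ and $K_2$ contributions and taking the supremum over $r>0$ yields $\|[b,I_\gamma]f\|_{(WL^p,L^s)^\alpha(w;\mu)}\le C\|f\|_{(L^p,L^s)^\alpha(\nu,w;\mu)}$, which is the asserted bound.
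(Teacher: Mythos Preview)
Your proposal is correct and follows essentially the same route as the paper: Theorem~\ref{Two2} handles the local piece, the tail is split via $|b(x)-b_Q|\,|I_\gamma f_2|+|I_\gamma((b-b_Q)f_2)|$ with the pointwise bounds \eqref{wh8}--\eqref{pointwise2}, and the scale-by-scale control of the averages is obtained from the generalized H\"older inequality \eqref{three} for the triple $(\exp L,\mathcal A,L^{p'})$ together with \eqref{assump2.1} and the Jensen-type bound $w(Q)^{1/p}\le|Q|^{1/(r'p)}\bigl(\int_Q w^r\bigr)^{1/(rp)}$, exactly as in the paper's estimates of $K'_3,K'_5,K'_6$ and \eqref{U}. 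The only cosmetic difference is that the paper sums the tail series using the reverse doubling constant of $w\in\Delta_2$ (as in \eqref{6}) rather than the $A_\infty$ inequality \eqref{compare} you invoke; both are available under the hypothesis $w\in A_\infty$.
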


\begin{proof}[Proof of Theorem $\ref{mainthm:5}$]
Let $1<p\leq\alpha<s\leq\infty$ and $f\in(L^p,L^s)^{\alpha}(\nu,w;\mu)$ with $w\in\Delta_2$ and $\mu\in\Delta_2$. For arbitrary $y\in\mathbb R^n$, set $Q=Q(y,\ell)$ for the cube centered at $y$ and of the side length $\ell$. Let
\begin{equation*}
f=f\cdot\chi_{2Q}+f\cdot\chi_{(2Q)^c}:=f_1+f_2,
\end{equation*}
where $\chi_{2Q}$ denotes the characteristic function of $2Q=Q(y,2\ell)$. Then for given $y\in\mathbb R^n$ and $\ell>0$, we write
\begin{align}\label{K}
&w(Q(y,\ell))^{1/{\alpha}-1/p-1/s}\big\|I_{\gamma}(f)\cdot\chi_{Q(y,\ell)}\big\|_{WL^p(w)}\notag\\
&\leq 2\cdot w(Q(y,\ell))^{1/{\alpha}-1/p-1/s}\big\|I_{\gamma}(f_1)\cdot\chi_{Q(y,\ell)}\big\|_{WL^p(w)}\notag\\
&+2\cdot w(Q(y,\ell))^{1/{\alpha}-1/p-1/s}\big\|I_{\gamma}(f_2)\cdot\chi_{Q(y,\ell)}\big\|_{WL^p(w)}\notag\\
&:=K_1(y,\ell)+K_2(y,\ell).
\end{align}
Using Theorem \ref{Two1}, we get
\begin{align}\label{K1}
K_1(y,\ell)&\leq 2\cdot w(Q(y,\ell))^{1/{\alpha}-1/p-1/s}\big\|I_\gamma(f_1)\big\|_{WL^p(w)}\notag\\
&\leq C\cdot w(Q(y,\ell))^{1/{\alpha}-1/p-1/s}
\bigg(\int_{Q(y,2\ell)}|f(x)|^p\nu(x)\,dx\bigg)^{1/p}\notag\\
&=C\cdot w(Q(y,2\ell))^{1/{\alpha}-1/p-1/s}\big\|f\cdot\chi_{Q(y,2\ell)}\big\|_{L^p(\nu)}\notag\\
&\times \frac{w(Q(y,\ell))^{1/{\alpha}-1/p-1/s}}{w(Q(y,2\ell))^{1/{\alpha}-1/p-1/s}}.
\end{align}
Moreover, since $1/{\alpha}-1/p-1/s<0$ and $w\in \Delta_2$, then by doubling inequality \eqref{weights}(consider cube $Q$ instead of ball $B$), we obtain
\begin{equation}\label{doubling3}
\frac{w(Q(y,\ell))^{1/{\alpha}-1/p-1/s}}{w(Q(y,2\ell))^{1/{\alpha}-1/p-1/s}}\leq C.
\end{equation}
Substituting the above inequality \eqref{doubling3} into \eqref{K1}, we thus obtain
\begin{equation}\label{k1yr}
K_1(y,\ell)\leq C\cdot w(Q(y,2\ell))^{1/{\alpha}-1/p-1/s}\big\|f\cdot\chi_{Q(y,2\ell)}\big\|_{L^p(\nu)}.
\end{equation}
We now estimate the second term $K_2(y,\ell)$. Using the same methods and steps as we deal with $I_2(y,r)$ in Theorem \ref{mainthm:1}, we can also obtain that for any $x\in Q(y,\ell)$,
\begin{equation}\label{alpha1}
\big|I_{\gamma}(f_2)(x)\big|\leq C\sum_{j=1}^\infty\frac{1}{|Q(y,2^{j+1}\ell)|^{1-{\gamma}/n}}\int_{Q(y,2^{j+1}\ell)}|f(z)|\,dz.
\end{equation}
This pointwise estimate \eqref{alpha1} together with Chebyshev's inequality implies
\begin{equation*}
\begin{split}
K_2(y,\ell)&\leq 2\cdot w(Q(y,\ell))^{1/{\alpha}-1/p-1/s}\bigg(\int_{Q(y,\ell)}\big|I_{\gamma}(f_2)(x)\big|^pw(x)\,dx\bigg)^{1/p}\\
&\leq C\cdot w(Q(y,\ell))^{1/{\alpha}-1/s}
\sum_{j=1}^\infty\frac{1}{|Q(y,2^{j+1}\ell)|^{1-{\gamma}/n}}\int_{Q(y,2^{j+1}\ell)}|f(z)|\,dz.
\end{split}
\end{equation*}
A further application of H\"older's inequality yields
\begin{equation*}
\begin{split}
K_2(y,\ell)&\leq C\cdot w(Q(y,\ell))^{1/{\alpha}-1/s}
\sum_{j=1}^\infty\frac{1}{|Q(y,2^{j+1}\ell)|^{1-{\gamma}/n}}\bigg(\int_{Q(y,2^{j+1}\ell)}|f(z)|^p\nu(z)\,dz\bigg)^{1/p}\\
&\times\bigg(\int_{Q(y,2^{j+1}\ell)}\nu(z)^{-p'/p}\,dz\bigg)^{1/{p'}}\\
&=C\sum_{j=1}^\infty w\big(Q(y,2^{j+1}\ell)\big)^{1/{\alpha}-1/p-1/s}\big\|f\cdot\chi_{Q(y,2^{j+1}\ell)}\big\|_{L^p(\nu)}\\
&\times\frac{w(Q(y,\ell))^{1/{\alpha}-1/s}}{w(Q(y,2^{j+1}\ell))^{1/{\alpha}-1/s}}
\cdot\frac{w(Q(y,2^{j+1}\ell))^{1/p}}{|Q(y,2^{j+1}\ell)|^{1-{\gamma}/n}}\bigg(\int_{Q(y,2^{j+1}\ell)}\nu(z)^{-p'/p}\,dz\bigg)^{1/{p'}}.
\end{split}
\end{equation*}
In addition, we apply H\"older's inequality with exponent $r$ to get
\begin{equation}\label{U}
w\big(Q(y,2^{j+1}\ell)\big)=\int_{Q(y,2^{j+1}\ell)}w(z)\,dz
\leq\big|Q(y,2^{j+1}\ell)\big|^{1/{r'}}\bigg(\int_{Q(y,2^{j+1}\ell)}w(z)^r\,dz\bigg)^{1/r}.
\end{equation}
Hence, in view of \eqref{U}, we have
\begin{equation}\label{k2yr}
\begin{split}
K_2(y,\ell)&\leq C\sum_{j=1}^\infty w\big(Q(y,2^{j+1}\ell)\big)^{1/{\alpha}-1/p-1/s}\big\|f\cdot\chi_{Q(y,2^{j+1}\ell)}\big\|_{L^p(\nu)}
\cdot\frac{w(Q(y,\ell))^{1/{\alpha}-1/s}}{w(Q(y,2^{j+1}\ell))^{1/{\alpha}-1/s}}\\
&\times\frac{|Q(y,2^{j+1}\ell)|^{1/{(r'p)}}}{|Q(y,2^{j+1}\ell)|^{1-{\gamma}/n}}
\bigg(\int_{Q(y,2^{j+1}\ell)}w(z)^r\,dz\bigg)^{1/{(rp)}}
\bigg(\int_{Q(y,2^{j+1}\ell)}\nu(z)^{-p'/p}\,dz\bigg)^{1/{p'}}\\
&\leq C\sum_{j=1}^\infty w\big(Q(y,2^{j+1}\ell)\big)^{1/{\alpha}-1/p-1/s}\big\|f\cdot\chi_{Q(y,2^{j+1}\ell)}\big\|_{L^p(\nu)}
\cdot\frac{w(Q(y,\ell))^{1/{\alpha}-1/s}}{w(Q(y,2^{j+1}\ell))^{1/{\alpha}-1/s}}.
\end{split}
\end{equation}
The last inequality is obtained by the $A_p$-type condition \eqref{assump1.1} on $(w,\nu)$. Furthermore, since $w\in\Delta_2$, we can easily check that there exists a \emph{reverse doubling} constant $D=D(w)>1$ independent of $Q$ such that (see Lemma 4.1 in \cite{komori})
\begin{equation*}
w(2Q)\geq D\cdot w(Q), \quad \mbox{for any cube }\,Q\subset\mathbb R^n,
\end{equation*}
which implies that for any positive integer $j\in\mathbb Z^+$, $w(2^{j+1}Q)\geq D^{j+1}\cdot w(Q)$ by iteration. Hence,
\begin{align}\label{5}
\sum_{j=1}^\infty\frac{w(Q(y,\ell))^{1/{\alpha}-1/s}}{w(Q(y,2^{j+1}\ell))^{1/{\alpha}-1/s}}
&\leq \sum_{j=1}^\infty\left(\frac{w(Q(y,\ell))}{D^{j+1}\cdot w(Q(y,\ell))}\right)^{1/{\alpha}-1/s}\notag\\
&=\sum_{j=1}^\infty\left(\frac{1}{D^{j+1}}\right)^{1/{\alpha}-1/s}\notag\\
&\leq C,
\end{align}
where the last series is convergent since the \emph{reverse doubling} constant $D>1$ and $1/{\alpha}-1/s>0$. Therefore by taking the $L^s(\mu)$-norm of both sides of \eqref{K}(with respect to the variable $y$), and then using Minkowski's inequality, \eqref{k1yr}, \eqref{k2yr} and \eqref{5}, we have
\begin{equation*}
\begin{split}
&\Big\|w(Q(y,\ell))^{1/{\alpha}-1/p-1/s}\big\|I_{\gamma}(f)\cdot\chi_{Q(y,\ell)}\big\|_{WL^p(w)}\Big\|_{L^s(\mu)}\\
&\leq\big\|K_1(y,\ell)\big\|_{L^s(\mu)}+\big\|K_2(y,\ell)\big\|_{L^s(\mu)}\\
&\leq C\Big\|w(Q(y,2\ell))^{1/{\alpha}-1/p-1/s}\big\|f\cdot\chi_{Q(y,2\ell)}\big\|_{L^p(\nu)}\Big\|_{L^s(\mu)}\\
&+C\sum_{j=1}^\infty\Big\|w(Q(y,2^{j+1}\ell))^{1/{\alpha}-1/p-1/s}\big\|f\cdot\chi_{Q(y,2^{j+1}\ell)}\big\|_{L^p(\nu)}\Big\|_{L^s(\mu)}
\times\frac{w(Q(y,\ell))^{1/{\alpha}-1/s}}{w(Q(y,2^{j+1}\ell))^{1/{\alpha}-1/s}}\\
&\leq C\big\|f\big\|_{(L^p,L^s)^{\alpha}(\nu,w;\mu)}+C\big\|f\big\|_{(L^p,L^s)^{\alpha}(\nu,w;\mu)}
\times\sum_{j=1}^\infty\frac{w(Q(y,\ell))^{1/{\alpha}-1/s}}{w(Q(y,2^{j+1}\ell))^{1/{\alpha}-1/s}}\\
&\leq C\big\|f\big\|_{(L^p,L^s)^{\alpha}(\nu,w;\mu)}.
\end{split}
\end{equation*}
Finally, by taking the supremum over all $\ell>0$, we finish the proof of Theorem \ref{mainthm:5}.
\end{proof}

\begin{proof}[Proof of Theorem $\ref{mainthm:6}$]
Let $1<p\leq\alpha<s\leq\infty$ and $f\in(L^p,L^s)^{\alpha}(\nu,w;\mu)$ with $w\in A_\infty$ and $\mu\in\Delta_2$. For an arbitrary cube $Q=Q(y,\ell)$ in $\mathbb R^n$, as before, we set
\begin{equation*}
f=f_1+f_2,\qquad f_1=f\cdot\chi_{2Q},\quad  f_2=f\cdot\chi_{(2Q)^c}.
\end{equation*}
Then for given $y\in\mathbb R^n$ and $\ell>0$, we write
\begin{align}\label{Kprime}
&w(Q(y,\ell))^{1/{\alpha}-1/p-1/s}\big\|[b,I_{\gamma}](f)\cdot\chi_{Q(y,\ell)}\big\|_{WL^p(w)}\notag\\
&\leq 2\cdot w(Q(y,\ell))^{1/{\alpha}-1/p-1/s}\big\|[b,I_{\gamma}](f_1)\cdot\chi_{Q(y,\ell)}\big\|_{WL^p(w)}\notag\\
&+2\cdot w(Q(y,\ell))^{1/{\alpha}-1/p-1/s}\big\|[b,I_{\gamma}](f_2)\cdot\chi_{Q(y,\ell)}\big\|_{WL^p(w)}\notag\\
&:=K'_1(y,\ell)+K'_2(y,\ell).
\end{align}
Since $w\in A_\infty$, we know that $w\in\Delta_2$. Applying Theorem \ref{Two2} and inequality (\ref{doubling3}), we get
\begin{align}\label{K1prime}
K'_1(y,\ell)&\leq 2\cdot w(Q(y,\ell))^{1/{\alpha}-1/p-1/s}\big\|[b,I_{\gamma}](f_1)\big\|_{WL^p(w)}\notag\\
&\leq C\cdot w(Q(y,\ell))^{1/{\alpha}-1/p-1/s}
\bigg(\int_{Q(y,2\ell)}|f(x)|^p\nu(x)\,dx\bigg)^{1/p}\notag\\
&=C\cdot w(Q(y,2\ell))^{1/{\alpha}-1/p-1/s}\big\|f\cdot\chi_{Q(y,2\ell)}\big\|_{L^p(\nu)}\notag\\
&\times \frac{w(Q(y,\ell))^{1/{\alpha}-1/p-1/s}}{w(Q(y,2\ell))^{1/{\alpha}-1/p-1/s}}\notag\\
&\leq C\cdot w(Q(y,2\ell))^{1/{\alpha}-1/p-1/s}\big\|f\cdot\chi_{Q(y,2\ell)}\big\|_{L^p(\nu)}.
\end{align}
Next we estimate the other term $K'_2(y,\ell)$. For any $x\in Q(y,\ell)$, from the definition of $[b,I_{\gamma}]$, one can see that
\begin{equation*}
\begin{split}
\big|[b,I_{\gamma}](f_2)(x)\big|
&\leq \big|b(x)-b_{Q(y,\ell)}\big|\cdot\big|I_\gamma(f_2)(x)\big|
+\Big|I_\gamma\big([b_{Q(y,\ell)}-b]f_2\big)(x)\Big|\\
&:=\xi(x)+\eta(x).
\end{split}
\end{equation*}
Consequently, we can further divide $K'_2(y,\ell)$ into two parts:
\begin{equation*}
\begin{split}
K'_2(y,\ell)\leq&4\cdot w(Q(y,\ell))^{1/{\alpha}-1/p-1/s}\big\|\xi(\cdot)\cdot\chi_{Q(y,\ell)}\big\|_{WL^p(w)}\\
&+4\cdot w(Q(y,\ell))^{1/{\alpha}-1/p-1/s}\big\|\eta(\cdot)\cdot\chi_{Q(y,\ell)}\big\|_{WL^p(w)}\\
:=&K'_3(y,\ell)+K'_4(y,\ell).
\end{split}
\end{equation*}
For the term $K'_3(y,\ell)$, it follows directly from Chebyshev's inequality and estimate \eqref{alpha1} that
\begin{equation*}
\begin{split}
K'_3(y,\ell)&\leq4\cdot w(Q(y,\ell))^{1/{\alpha}-1/p-1/s}\bigg(\int_{Q(y,\ell)}\big|\xi(x)\big|^pw(x)\,dx\bigg)^{1/p}\\
&\leq C\cdot w(Q(y,\ell))^{1/{\alpha}-1/p-1/s}\bigg(\int_{Q(y,\ell)}\big|b(x)-b_{Q(y,\ell)}\big|^pw(x)\,dx\bigg)^{1/p}\\
&\times\sum_{j=1}^\infty\frac{1}{|Q(y,2^{j+1}\ell)|^{1-\gamma/n}}\int_{Q(y,2^{j+1}\ell)}|f(z)|\,dz\\
&\leq C\cdot w(Q(y,\ell))^{1/{\alpha}-1/s}
\sum_{j=1}^\infty\frac{1}{|Q(y,2^{j+1}\ell)|^{1-\gamma/n}}\int_{Q(y,2^{j+1}\ell)}|f(z)|\,dz,
\end{split}
\end{equation*}
where in the last inequality we have used the fact that Lemma \ref{BMO}$(ii)$ still holds with ball $B$ replaced by cube $Q$, when $w$ is an $A_{\infty}$ weight. Arguing as in the proof of Theorem \ref{mainthm:5}, we can also obtain that
\begin{equation*}
K'_3(y,\ell)\leq C\sum_{j=1}^\infty w\big(Q(y,2^{j+1}\ell)\big)^{1/{\alpha}-1/p-1/s}\big\|f\cdot\chi_{Q(y,2^{j+1}\ell)}\big\|_{L^p(\nu)}
\cdot\frac{w(Q(y,\ell))^{1/{\alpha}-1/s}}{w(Q(y,2^{j+1}\ell))^{1/{\alpha}-1/s}}.
\end{equation*}
Let us now estimate the term $K'_4(y,\ell)$. Using the same methods and steps as we deal with $J_2(y,r)$ in Theorem \ref{mainthm:3}, we can show the following pointwise estimate as well.
\begin{equation*}
\begin{split}
\eta(x)&=\Big|I_\gamma\big([b_{Q(y,\ell)}-b]f_2\big)(x)\Big|\\
&\leq C\sum_{j=1}^\infty\frac{1}{|Q(y,2^{j+1}\ell)|^{1-\gamma/n}}\int_{Q(y,2^{j+1}\ell)}\big|b(z)-b_{Q(y,\ell)}\big|\cdot|f(z)|\,dz.
\end{split}
\end{equation*}
This, together with Chebyshev's inequality implies
\begin{equation*}
\begin{split}
K'_4(y,\ell)&\leq4\cdot w(Q(y,\ell))^{1/{\alpha}-1/p-1/s}\bigg(\int_{Q(y,\ell)}\big|\eta(x)\big|^pw(x)\,dx\bigg)^{1/p}\\
&\leq C\cdot w(Q(y,\ell))^{1/{\alpha}-1/s}
\sum_{j=1}^\infty\frac{1}{|Q(y,2^{j+1}\ell)|^{1-\gamma/n}}\int_{Q(y,2^{j+1}\ell)}\big|b(z)-b_{Q(y,\ell)}\big|\cdot|f(z)|\,dz\\
&\leq C\cdot w(Q(y,\ell))^{1/{\alpha}-1/s}
\sum_{j=1}^\infty\frac{1}{|Q(y,2^{j+1}\ell)|^{1-\gamma/n}}\int_{Q(y,2^{j+1}\ell)}\big|b(z)-b_{Q(y,2^{j+1}\ell)}\big|\cdot|f(z)|\,dz\\
&+C\cdot w(Q(y,\ell))^{1/{\alpha}-1/s}
\sum_{j=1}^\infty\frac{1}{|Q(y,2^{j+1}\ell)|^{1-\gamma/n}}\int_{Q(y,2^{j+1}\ell)}\big|b_{Q(y,2^{j+1}\ell)}-b_{Q(y,\ell)}\big|\cdot|f(z)|\,dz\\
&:=K'_5(y,\ell)+K'_6(y,\ell).
\end{split}
\end{equation*}
An application of H\"older's inequality leads to that
\begin{equation*}
\begin{split}
K'_5(y,\ell)&\leq C\cdot w(Q(y,\ell))^{1/{\alpha}-1/s}
\sum_{j=1}^\infty\frac{1}{|Q(y,2^{j+1}\ell)|^{1-\gamma/n}}\bigg(\int_{Q(y,2^{j+1}\ell)}|f(z)|^p\nu(z)\,dz\bigg)^{1/p}\\
&\times\bigg(\int_{Q(y,2^{j+1}\ell)}\big|b(z)-b_{Q(y,2^{j+1}\ell)}\big|^{p'}\nu(z)^{-p'/p}\,dz\bigg)^{1/{p'}}\\
&=C\cdot w(Q(y,\ell))^{1/{\alpha}-1/s}\sum_{j=1}^\infty\frac{\big\|f\cdot\chi_{Q(y,2^{j+1}\ell)}\big\|_{L^p(\nu)}}{|Q(y,2^{j+1}\ell)|^{1-\gamma/n}}\\
&\times\big|Q(y,2^{j+1}\ell)\big|^{1/{p'}}\Big\|\big[b-b_{Q(y,2^{j+1}\ell)}\big]\cdot\nu^{-1/p}\Big\|_{\mathcal C,Q(y,2^{j+1}\ell)},
\end{split}
\end{equation*}
where $\mathcal C(t)=t^{p'}$ is a Young function. For $1<p<\infty$, it is easy to see that the inverse function of $\mathcal C(t)$ is $\mathcal C^{-1}(t)=t^{1/{p'}}$. Also observe that the following equality holds:
\begin{equation*}
\begin{split}
\mathcal C^{-1}(t)&=t^{1/{p'}}\\
&=\frac{t^{1/{p'}}}{1+\log^+ t}\times\big(1+\log^+t\big)\\
&=\mathcal A^{-1}(t)\cdot\mathcal B^{-1}(t),
\end{split}
\end{equation*}
where
\begin{equation*}
\mathcal A(t)\approx t^{p'}\cdot(1+\log^+t)^{p'},\qquad \mbox{and}\qquad \mathcal B(t)\approx \exp(t)-1.
\end{equation*}
Thus, by generalized H\"older's inequality \eqref{three} and estimate \eqref{Jensen}(consider cube $Q$ instead of ball $B$ when $w\equiv1$), we have
\begin{equation*}
\begin{split}
\Big\|\big[b-b_{Q(y,2^{j+1}\ell)}\big]\cdot\nu^{-1/p}\Big\|_{\mathcal C,Q(y,2^{j+1}\ell)}
&\leq C\Big\|b-b_{Q(y,2^{j+1}\ell)}\Big\|_{\mathcal B,Q(y,2^{j+1}\ell)}\cdot\Big\|\nu^{-1/p}\Big\|_{\mathcal A,Q(y,2^{j+1}\ell)}\\
&\leq C\|b\|_*\cdot\Big\|\nu^{-1/p}\Big\|_{\mathcal A,Q(y,2^{j+1}\ell)}.
\end{split}
\end{equation*}
Moreover, in view of \eqref{U}, we can deduce that
\begin{equation*}
\begin{split}
K'_5(y,\ell)&\leq C\|b\|_*\cdot w(Q(y,\ell))^{1/{\alpha}-1/s}
\sum_{j=1}^\infty\frac{\big\|f\cdot\chi_{Q(y,2^{j+1}\ell)}\big\|_{L^p(\nu)}}{|Q(y,2^{j+1}\ell)|^{1/p-\gamma/n}}
\cdot\Big\|\nu^{-1/p}\Big\|_{\mathcal A,Q(y,2^{j+1}\ell)}\\
&=C\|b\|_*\sum_{j=1}^\infty w\big(Q(y,2^{j+1}\ell)\big)^{1/{\alpha}-1/p-1/s}\big\|f\cdot\chi_{Q(y,2^{j+1}\ell)}\big\|_{L^p(\nu)}
\cdot\frac{w(Q(y,\ell))^{1/{\alpha}-1/s}}{w(Q(y,2^{j+1}\ell))^{1/{\alpha}-1/s}}\\
&\times\frac{w(Q(y,2^{j+1}\ell))^{1/p}}{|Q(y,2^{j+1}\ell)|^{1/p-\gamma/n}}
\cdot\Big\|\nu^{-1/p}\Big\|_{\mathcal A,Q(y,2^{j+1}\ell)}\\
&\leq C\|b\|_*\sum_{j=1}^\infty w\big(Q(y,2^{j+1}\ell)\big)^{1/{\alpha}-1/p-1/s}\big\|f\cdot\chi_{Q(y,2^{j+1}\ell)}\big\|_{L^p(\nu)}
\cdot\frac{w(Q(y,\ell))^{1/{\alpha}-1/s}}{w(Q(y,2^{j+1}\ell))^{1/{\alpha}-1/s}}\\
&\times\frac{|Q(y,2^{j+1}\ell)|^{1/{(r'p)}}}{|Q(y,2^{j+1}\ell)|^{1/p-\gamma/n}}
\bigg(\int_{Q(y,2^{j+1}\ell)}w(z)^r\,dz\bigg)^{1/{(rp)}}\cdot\Big\|\nu^{-1/p}\Big\|_{\mathcal A,Q(y,2^{j+1}\ell)}\\
&\leq C\|b\|_*\sum_{j=1}^\infty w\big(Q(y,2^{j+1}\ell)\big)^{1/{\alpha}-1/p-1/s}\big\|f\cdot\chi_{Q(y,2^{j+1}\ell)}\big\|_{L^p(\nu)}
\cdot\frac{w(Q(y,\ell))^{1/{\alpha}-1/s}}{w(Q(y,2^{j+1}\ell))^{1/{\alpha}-1/s}}.
\end{split}
\end{equation*}
The last inequality is obtained by the $A_p$-type condition \eqref{assump2.1} on $(w,\nu)$. It remains to estimate the last term $K'_6(y,\ell)$.
 Applying Lemma \ref{BMO}$(i)$(use $Q$ instead of $B$) and H\"older's inequality, we get
\begin{equation*}
\begin{split}
K'_6(y,\ell)&\leq C\cdot w(Q(y,\ell))^{1/{\alpha}-1/s}
\sum_{j=1}^\infty\frac{(j+1)\|b\|_*}{|Q(y,2^{j+1}\ell)|^{1-\gamma/n}}\int_{Q(y,2^{j+1}\ell)}|f(z)|\,dz\\
&\leq C\cdot w(Q(y,\ell))^{1/{\alpha}-1/s}
\sum_{j=1}^\infty\frac{(j+1)\|b\|_*}{|Q(y,2^{j+1}\ell)|^{1-\gamma/n}}\bigg(\int_{Q(y,2^{j+1}\ell)}|f(z)|^p\nu(z)\,dz\bigg)^{1/p}\\
&\times\bigg(\int_{Q(y,2^{j+1}\ell)}\nu(z)^{-p'/p}\,dz\bigg)^{1/{p'}}\\
&=C\|b\|_*\sum_{j=1}^\infty w\big(Q(y,2^{j+1}\ell)\big)^{1/{\alpha}-1/p-1/s}\big\|f\cdot\chi_{Q(y,2^{j+1}\ell)}\big\|_{L^p(\nu)}\\
&\times\big(j+1\big)\cdot\frac{w(Q(y,\ell))^{1/{\alpha}-1/s}}{w(Q(y,2^{j+1}\ell))^{1/{\alpha}-1/s}}
\cdot\frac{w(Q(y,2^{j+1}\ell))^{1/p}}{|Q(y,2^{j+1}\ell)|^{1-\gamma/n}}\bigg(\int_{Q(y,2^{j+1}\ell)}\nu(z)^{-p'/p}\,dz\bigg)^{1/{p'}}.
\end{split}
\end{equation*}
Let $\mathcal C(t)$ and $\mathcal A(t)$ be the same as before. Obviously, $\mathcal C(t)\leq\mathcal A(t)$ for all $t>0$, then it is not difficult to see that for any given cube $Q$ in $\mathbb R^n$, we have $\big\|f\big\|_{\mathcal C,Q}\leq\big\|f\big\|_{\mathcal A,Q}$ by definition, which implies that condition \eqref{assump2.1} is stronger that condition \eqref{assump1.1}. This fact together with \eqref{U} yields
\begin{equation*}
\begin{split}
K'_6(y,\ell)&\leq C\|b\|_*\sum_{j=1}^\infty w\big(Q(y,2^{j+1}\ell)\big)^{1/{\alpha}-1/p-1/s}\big\|f\cdot\chi_{Q(y,2^{j+1}\ell)}\big\|_{L^p(\nu)}\\
&\times\big(j+1\big)\cdot\frac{w(Q(y,\ell))^{1/{\alpha}-1/s}}{w(Q(y,2^{j+1}\ell))^{1/{\alpha}-1/s}}\\
&\times\frac{|Q(y,2^{j+1}\ell)|^{1/{(r'p)}}}{|Q(y,2^{j+1}\ell)|^{1-\gamma/n}}
\bigg(\int_{Q(y,2^{j+1}\ell)}w(z)^r\,dz\bigg)^{1/{(rp)}}
\bigg(\int_{Q(y,2^{j+1}\ell)}\nu(z)^{-p'/p}\,dz\bigg)^{1/{p'}}\\
&\leq C\|b\|_*\sum_{j=1}^\infty w\big(Q(y,2^{j+1}\ell)\big)^{1/{\alpha}-1/p-1/s}\big\|f\cdot\chi_{Q(y,2^{j+1}\ell)}\big\|_{L^p(\nu)}\\
&\times\big(j+1\big)\cdot\frac{w(Q(y,\ell))^{1/{\alpha}-1/s}}{w(Q(y,2^{j+1}\ell))^{1/{\alpha}-1/s}}.
\end{split}
\end{equation*}
Summing up all the above estimates, we conclude that
\begin{align}\label{K2prime}
K'_2(y,\ell)&\leq C\sum_{j=1}^\infty w\big(Q(y,2^{j+1}\ell)\big)^{1/{\alpha}-1/p-1/s}\big\|f\cdot\chi_{Q(y,2^{j+1}\ell)}\big\|_{L^p(\nu)}\notag\\
&\times\big(j+1\big)\cdot\frac{w(Q(y,\ell))^{1/{\alpha}-1/s}}{w(Q(y,2^{j+1}\ell))^{1/{\alpha}-1/s}}.
\end{align}
Moreover, since $w$ is an $A_{\infty}$ weight, one has $w\in\Delta_2$. Then there exists a \emph{reverse doubling} constant $D=D(w)>1$ such that for any positive integer $j\in\mathbb Z^+$, $w(Q(y,2^{j+1}\ell))\geq D^{j+1}\cdot w(Q(y,\ell))$. This allows us to get the following:
\begin{align}\label{6}
\sum_{j=1}^\infty\big(j+1\big)\cdot\frac{w(Q(y,\ell))^{1/{\alpha}-1/s}}{w(Q(y,2^{j+1}\ell))^{1/{\alpha}-1/s}}
&\leq\sum_{j=1}^\infty(j+1)\cdot\left(\frac{w(Q(y,\ell))}{D^{j+1}\cdot w(Q(y,\ell))}\right)^{1/{\alpha}-1/s}\notag\\
&=\sum_{j=1}^\infty(j+1)\cdot\left(\frac{1}{D^{j+1}}\right)^{1/{\alpha}-1/s}\notag\\
&\leq C.
\end{align}
Notice that the exponent $(1/{\alpha}-1/s)$ is positive because $\alpha<s$, which guarantees that the last series is convergent.
Thus by taking the $L^s(\mu)$-norm of both sides of \eqref{Kprime}(with respect to the variable $y$), and then using Minkowski's inequality, \eqref{K1prime}, \eqref{K2prime} and \eqref{6}, we finally obtain
\begin{equation*}
\begin{split}
&\Big\|w(Q(y,\ell))^{1/{\alpha}-1/p-1/s}\big\|[b,I_{\gamma}](f)\cdot\chi_{Q(y,\ell)}\big\|_{WL^p(w)}\Big\|_{L^s(\mu)}\\
&\leq\big\|K'_1(y,\ell)\big\|_{L^s(\mu)}+\big\|K'_2(y,\ell)\big\|_{L^s(\mu)}\\
&\leq C\Big\|w(Q(y,2\ell))^{1/{\alpha}-1/p-1/s}\big\|f\cdot\chi_{Q(y,2\ell)}\big\|_{L^p(\nu)}\Big\|_{L^s(\mu)}\\
&+C\sum_{j=1}^\infty\Big\|w(Q(y,2^{j+1}\ell))^{1/{\alpha}-1/p-1/s}\big\|f\cdot\chi_{Q(y,2^{j+1}\ell)}\big\|_{L^p(\nu)}\Big\|_{L^s(\mu)}\\
\end{split}
\end{equation*}
\begin{equation*}
\begin{split}
&\times\big(j+1\big)\cdot\frac{w(Q(y,\ell))^{1/{\alpha}-1/s}}{w(Q(y,2^{j+1}\ell))^{1/{\alpha}-1/s}}\\
&\leq C\big\|f\big\|_{(L^p,L^s)^{\alpha}(\nu,w;\mu)}+C\big\|f\big\|_{(L^p,L^s)^{\alpha}(\nu,w;\mu)}
\times\sum_{j=1}^\infty\big(j+1\big)\cdot\frac{w(Q(y,\ell))^{1/{\alpha}-1/s}}{w(Q(y,2^{j+1}\ell))^{1/{\alpha}-1/s}}\\
&\leq C\big\|f\big\|_{(L^p,L^s)^{\alpha}(\nu,w;\mu)}.
\end{split}
\end{equation*}
We therefore conclude the proof of Theorem \ref{mainthm:6} by taking the supremum over all $\ell>0$.
\end{proof}
In view of \eqref{dominate2} and \eqref{dominate1}, as an immediate consequence of Theorem \ref{mainthm:5}, we have the following result.
\begin{corollary}
Let $0<\gamma<n$, $1<p\leq\alpha<s\leq\infty$ and $\mu\in\Delta_2$. Given a pair of weights $(w,\nu)$, suppose that for some $r>1$ and for all cubes $Q$ in $\mathbb R^n$, \eqref{assump1.1} holds.
If $w\in \Delta_2$, then both fractional maximal operator $M_{\gamma}$ and generalized fractional integrals $\mathcal L^{-\gamma/2}$ are bounded from $(L^p,L^s)^{\alpha}(\nu,w;\mu)$ into $(WL^p,L^s)^{\alpha}(w;\mu)$.
\end{corollary}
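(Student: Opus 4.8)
The plan is to obtain this corollary as a direct consequence of Theorem~\ref{mainthm:5}, by combining the pointwise inequalities \eqref{dominate2} and \eqref{dominate1} with the monotonicity of the weak amalgam norm under pointwise domination. First I would establish this monotonicity: if $g$ and $h$ are measurable functions with $|g(x)|\le|h(x)|$ for a.e.\ $x\in\mathbb R^n$, then for every ball $B=B(y,r)$ and every $\lambda>0$ the inclusion $\{x\in B:|g(x)|>\lambda\}\subseteq\{x\in B:|h(x)|>\lambda\}$ forces $w(\{x\in B:|g(x)|>\lambda\})\le w(\{x\in B:|h(x)|>\lambda\})$, and hence $\big\|g\cdot\chi_{B}\big\|_{WL^p(w)}\le\big\|h\cdot\chi_{B}\big\|_{WL^p(w)}$. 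Multiplying by the factor $w(B(y,r))^{1/{\alpha}-1/p-1/s}$, taking the $L^s(\mu)$-norm in the variable $y$, and then the supremum over $r>0$, one arrives at
\[
\big\|g\big\|_{(WL^p,L^s)^{\alpha}(w;\mu)}\le\big\|h\big\|_{(WL^p,L^s)^{\alpha}(w;\mu)}.
\]
The same elementary argument applied to $\big\|\cdot\big\|_{L^p(\nu)}$ shows $\big\|g\big\|_{(L^p,L^s)^{\alpha}(\nu,w;\mu)}=\big\||g|\big\|_{(L^p,L^s)^{\alpha}(\nu,w;\mu)}$, so the two-weight amalgam norm depends only on $|f|$.

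With this in hand, I would treat the two operators in parallel. For the fractional maximal operator, \eqref{dominate2} gives $M_\gamma f(x)\le C\cdot I_\gamma(|f|)(x)$ for every $x$, so by the monotonicity above
\[
\big\|M_\gamma f\big\|_{(WL^p,L^s)^{\alpha}(w;\mu)}\le C\cdot\big\|I_\gamma(|f|)\big\|_{(WL^p,L^s)^{\alpha}(w;\mu)}.
\]
Since the hypotheses of the present corollary on $p,\alpha,s,\mu$, on the pair $(w,\nu)$ via \eqref{assump1.1}, and on $w\in\Delta_2$ are precisely those of Theorem~\ref{mainthm:5}, applying that theorem to the function $|f|$ bounds the right-hand side by $C\cdot\big\||f|\big\|_{(L^p,L^s)^{\alpha}(\nu,w;\mu)}=C\cdot\big\|f\big\|_{(L^p,L^s)^{\alpha}(\nu,w;\mu)}$. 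This is the asserted estimate for $M_\gamma$. For the generalized fractional integrals $\mathcal L^{-\gamma/2}$ the reasoning is identical, starting instead from the pointwise bound \eqref{dominate1}, namely $\big|\mathcal L^{-\gamma/2}f(x)\big|\le C\cdot I_\gamma(|f|)(x)$.

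There is no genuine obstacle in this argument: the only point that requires a moment's care is the monotonicity of the weak amalgam norm under pointwise domination, which is immediate once the definitions of $WL^p(w)$ and $(WL^p,L^s)^{\alpha}(w;\mu)$ are unwound; the usual modification when $s=\infty$ is handled in the same way. Everything else reduces to citing Theorem~\ref{mainthm:5} and the already-established pointwise inequalities \eqref{dominate2} and \eqref{dominate1}.
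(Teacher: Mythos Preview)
Your proposal is correct and matches the paper's own approach: the paper simply states that the corollary follows from Theorem~\ref{mainthm:5} together with the pointwise inequalities \eqref{dominate2} and \eqref{dominate1}, without writing any further details. Your explicit verification of the monotonicity of the weak amalgam norm under pointwise domination just spells out what the paper leaves implicit.
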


\end{document}